\newtheorem{corollary}{Corollary}
\newtheorem{proposition}{Proposition}
\newtheorem{lemma}{Lemma}
\newtheorem{theorem}{Theorem}
\newtheorem*{thm}{Theorem}
\newtheorem{definition}{Definition}
\newtheorem{remark}{Remark}
\newcommand\ov{\overline}
\long\def\symbolfootnote[#1]#2{\begingroup%
\def\thefootnote{\fnsymbol{footnote}}\footnote[#1]{#2}\endgroup} 
\begin{document}
\title{{On the homotopy type of $A(\Sigma X)$}}
\date{}
\author{Crichton Ogle\\
Dept. of Mathematics\\The Ohio State University\\
\texttt{ogle@math.ohio-state.edu}}
\maketitle
\begin{abstract} We prove that the functor $X\mapsto A(\Sigma X)$ from connected pointed spaces to spectra, given by Waldhausen's $K$-theory, splits as a product of its Goodwillie derivatives.
\end{abstract}

\tableofcontents

\symbolfootnote[0]{2010 {\it Mathematics Subject Classification}. Primary 19D10; Secondary 57N37.}
\symbolfootnote[0]{{\it Key words and phrases}. Walhausen $K$-theory, Goodwillie Calculus.}
\vskip.5in

\section{Introduction/Statement of results}
\vskip.4in
In \cite{w2}, Waldhausen constructs a map 
$$
W:A(X)\to\Omega^\infty\Sigma^\infty(X_+)
$$ 
(where $A(X)$ denotes the
Waldhausen $K$-theory of the space $X$), and shows that evaluation on the image of $M : \Omega^\infty\Sigma^\infty(X_+)\to A(X)$ induced by the inclusion of monomial matrices produces a self-map $W\circ M : \Omega^\infty\Sigma^\infty(X_+) \to \Omega^\infty\Sigma^\infty(X_+)$ homotopic to the identity by a homotopy natural in $X$.  This yields a splitting of $\Omega^\infty\Sigma^\infty(X_+)$ off of $A(X)$ (as well as its stabilization $A^S(X)$), a fact which plays a key role in the proof of the
Fundamental Theorem of Waldhausen $K$-theory relating $A(X)$ to pseudo-isotopy theory (\cite{w2}, \cite{wm}, \cite{w} and finally \cite{jrw}): 

\begin{thm}[Waldhausen, Jahren, Rognes]
$A(X) \simeq \Omega^\infty\Sigma^\infty(X_+) \times Wh^{Diff}(X)$ where $\Omega^2 Wh^{Diff}(X)
\simeq \wp(X)$ = the stable pseudo-isotopy space of $X$ (as defined by 
Hatcher-Wagoner-Igusa).
\end{thm}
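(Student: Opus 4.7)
The splitting $A(X) \simeq \Omega^\infty\Sigma^\infty(X_+) \times Wh^{Diff}(X)$ is formal given the Waldhausen retraction $W \circ M \simeq \mathrm{id}$ recalled immediately above: define $Wh^{Diff}(X) := \mathrm{hofib}(W)$, and the section $M$ then forces $A(X)$ to split as the product of this fiber with $\Omega^\infty\Sigma^\infty(X_+)$, naturally in $X$ because $W$ and $M$ are natural.

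The substantive content is $\Omega^2 Wh^{Diff}(X) \simeq \wp(X)$, and the plan is to follow the arc carried out across \cite{w2}, \cite{wm}, \cite{w}, and completed in \cite{jrw}. The main step is to produce a geometric incarnation of $A(X)$: I would replace the Waldhausen $S_\bullet$ construction on retractive finite CW complexes over $X$ by the analogous construction on a Waldhausen category of compact manifolds (or of finite polyhedra with simple maps as weak equivalences) equipped with a reference map to $X$. An approximation-theorem argument identifies the $K$-theory of this geometric category with $A(X)$. Under this identification, the $\Omega^\infty\Sigma^\infty(X_+)$ summand is accounted for by trivial disk bundles, and the complementary summand $Wh^{Diff}(X)$ deloops to the classifying space of stable smooth $h$-cobordisms on $X$.

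The final step is to take one further loop and apply the parametrized smooth $s$-cobordism theorem, identifying the loop space of the stable $h$-cobordism space with the stable pseudoisotopy space $\wp(X)$ in the Hatcher-Wagoner-Igusa model. Combining this with the previous identification yields $\Omega^2 Wh^{Diff}(X) \simeq \wp(X)$.

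The chief obstacle, and the reason the proof required several papers rather than one, is the comparison between the combinatorial $K$-theory of retractive finite CW complexes and the geometric $K$-theory built from compact manifolds. This rests on Waldhausen's simple-map theorem and a manifold-sensitive version of the approximation theorem, together with a careful PL-to-smooth transition so that the output can be interpreted in terms of smooth $h$-cobordism bundles. Once this geometric description of $A(X)$ is established, the pseudoisotopy identification is classical parametrized surgery.
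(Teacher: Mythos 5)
The paper does not prove this theorem; it records it as background and cites the proof to [W2], [WM], [W], and finally [JRW], so there is no in-paper argument to compare against. Your outline does track that cited program at the right level of abstraction: the retraction $W\circ M\simeq\mathrm{id}$ is a retraction of infinite loop spaces (both $W$ and $M$ being infinite loop maps), which is what makes the product splitting $A(X)\simeq\Omega^\infty\Sigma^\infty(X_+)\times Wh^{\mathrm{Diff}}(X)$ with $Wh^{\mathrm{Diff}}(X)=\mathrm{hofib}(W)$ formal rather than merely a splitting on homotopy groups; the manifold/simple-map model and the stable parametrized $h$-cobordism theorem from [JRW] give $\mathcal{H}^{\mathrm{DIFF}}(X)\simeq\Omega Wh^{\mathrm{Diff}}(X)$; and the classical parametrized $s$-cobordism theorem supplies the second loop $\wp(X)\simeq\Omega\mathcal{H}^{\mathrm{DIFF}}(X)$, giving $\Omega^2 Wh^{\mathrm{Diff}}(X)\simeq\wp(X)$.

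Two small points to tighten. First, the phrase that $Wh^{\mathrm{Diff}}(X)$ ``deloops to'' the stable $h$-cobordism space has the direction reversed: $Wh^{\mathrm{Diff}}(X)$ is a \emph{delooping of} $\mathcal{H}^{\mathrm{DIFF}}(X)$, not a loop space of it. Second, the geometric model in [JRW] is built in the PL category from simple maps of finite simplicial sets, with the passage to the smooth case handled by smoothing theory as a separate layer rather than by working directly with a Waldhausen category of smooth manifolds; the way the $\Omega^\infty\Sigma^\infty(X_+)$ summand is matched on the geometric side is correspondingly more indirect than ``trivial disk bundles'' suggests. Neither affects the correctness of the high-level outline, and since the present paper only uses the statement as a black box, no more is needed here.
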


The construction of $W$ is in stages.  It is first shown that the homotopy fibre
\[
\ov{A}(S^n \wedge X_+) := hofibre(A(S^n \wedge X_+) \to A(*))
\]
can be described through a certain range of dimensions (approximately $2n$) in terms of a type of cyclic bar construction.  On this cyclic bar construction Waldhausen defines a map to $\Omega^\infty\Sigma^\infty(X_+)$ compatible with stabilization.  The result is a map $A^S(X) \to \Omega^\infty\Sigma^\infty(X_+)$ natural in $X$, and precomposition with the stabilization map $A(X) \to A^S(X)$ yields $W$.  In this paper we present a generalization of Waldhausen's map $W$.  Specifically let $X$ and $Y$ be pointed simplicial sets, $X$ connected.  Then there exists a \underbar{generalized Waldhausen trace map}:
\[
\overline{Tr}_X(Y) :  \underset{n\to\infty}{\lim}\Omega^n hofibre(\ov{A}(\Sigma(X \wedge \Sigma^n Y)) \to \overline A(\Sigma X)) \to \Omega^\infty\Sigma^\infty(\Sigma(\underset{q\ge 1}{\vee} | X^{[q-1]} \wedge Y |)).
\]

This map is natural in $X$ and $Y$.  The main application is to prove
\vskip.2in

{\bf\underbar{Theorem A}} {\it For connected $X$ there is a weak equivalence of infinite loop spaces
$$
\tilde\rho = \underset{q\ge 1}{\prod} \tilde\rho_q :
\Omega^\infty\Sigma^\infty(\Sigma(\underset{q\ge 1}{\vee} E\mathbb Z/q
\underset{\mathbb Z/q}{\leftthreetimes} |X|^{[q]})) \overset\simeq{\longrightarrow}
\overline A(\Sigma X) 
$$
natural in $X$, where the action of $\Bbb Z/q$ on $|X|^{[q]}$ is given by cyclic permutation.}
\vskip.2in

It should be noted that an alternative approach to the $p$-adic completion of this result would be to use the main result of [6A], which determines $\ov{A}(X)^{\wedge}_p$ for arbitrary 1-connected spaces (not just suspension spaces). Theorem A is, however, an integral identification of $\ov{A}(\Sigma X)$ obtained by explicit maps in a stable range.
\vskip.2in

Theorem A has an unfortunate history. In 1986, this result was announced simultaneously by the author and the authors of \cite{ccgh}, as a solution to a conjecture posed by Goodwillie a few years prior. Unfortunately both the unpublished \cite{o1}, \cite{o2}, as well as \cite{ccgh}, included a number of technical errrors. The proof of Theorem A we give here begins with the line of argument attempted in \cite{ccgh}, then modified along the lines of \cite{w2}.  An outline is as follows. In section 1 we recall the necessary results from \cite{w2} and Goodwillie's Calculus of Functors (\cite{g1} -- \cite{g3}), and define the maps $\tilde\rho_q$ used in the proof.  In section 2, we follow the arguments of \cite{w2} in constructing the trace map $\overline{Tr}_X(Y)$, culminating in section 2.3 where  we use $\overline{Tr}_X(Y)$ to explicitly compute the first derivative of $\tilde\rho_q$ at a connected space $X$ (in the sense of Goodwillie). Appealing to Goodwillie's convergence criteria for functor calculus \cite{g2}, together with his computation of the first differential of $\wp (X)$ \cite{g1} and the Fundamental Theorem of \cite{jrw} cited above then completes the proof of the result. It is quite likely that an integral statement similar to the above could alternatively be established by the methods of \cite{dgm}.
\vskip.2in

This paper represents work done by the author during the period 1988 - 1992, and as such represents a \lq\lq classical\rq\rq\ approach to the construction of trace maps on Waldhausen $K$-theory. During that period, the author benefitted from discussions with a number of mathematicians, including R. Schw\"anzl and R. Vogt (who carefully critiqued \cite{o1} and \cite{o2}), and Z. Fiedorowicz (with whom we were able to resolve some of the technical issues of these earlier attempts in \cite{fov}). But the {\it sine qua non} in all of this is the work of Waldhausen and Goodwillie; Waldausen for his development of the $K$-theory which bears his name and his contribution to the proof of the Fundamental Theorem stated above, and Goodwillie for his invention of the Calculus of Functors and the computation of the derivatives of stable pseudo-isotopy. What appears below is simply a corollary of their foundational work.
\vskip.2in

During the initial stages of this paper, the author enjoyed the hospitality of the Max-Plank-Institut f\"ur Mathematik in Bonn. This work partially supported by a grant from the National Science Foundation. We would like to thank Tom Goodwillie for his critical reading of earlier versions of this work, and the suggested improvements that followed. We would also like to thank the referee for the final version of this paper for his helpful and illuminating critque.
\newpage


\section{Background}


\subsection{Waldhausen $K$-theory} We recall the construction of $A(X)$ as given in \cite{w2}.  Let $X$ be a pointed, connected simplicial set, $GX$ its Kan loop group.  Let $H^n_k (|GX|)$ denote the total singular complex of the topological monoid $Aut_{|GX|} (\overset {k}{\vee}\ S^n \wedge |GX|_+)$ of $|GX|$-equivariant self-homotopy equivalences of the free basepointed $|GX|$-space $(\overset {k}{\vee}\ S^n \wedge |GX|_+)$. $H^n_k(|M|)$ is a mapping space (for a simplicial monoid $M$), for which we will adopt the convention that $AB = B\circ A$.
$H^n_k (|GX|)$ identifies naturally with a set $\overline M^n_k (|GX|_+)$ of path components of
\[
M^n_k (|GX|_+) = Map (\overset k {\vee} \ S^n,\overset k {\vee}\ S^n \wedge|GX|_+)
\]
under the inclusion 
\[
H^n_k (|GX|) \hookrightarrow Map_{|GX|} (\overset k {\vee} S^n \wedge |GX|_+, 
\overset k {\vee} S^n \wedge |GX|_+) \cong
Map (\overset k {\vee} S^n, \overset k {\vee} S^n\wedge |GX|_+)\; .
\]
One has stabilization maps 
\[
M^n_k (|GX|_+)\overset\imath{\longrightarrow} M^n_{k+1} (|GX|_+)
\]
given by wedge product with the inclusion  $S^n \hookrightarrow S^n \wedge |GX|_+$, suspension maps $M^n_k (|GX _+)\overset\Sigma{\longrightarrow} M^{n+1}_k (|GX|_+)$ given by smash product with the identity map on $S^1$, as well as pairing maps 
\[
M^n_k (|GX|_+)\times M_\ell^n (|GX|_+) \to M^n_{k+\ell} (|GX|_+)
\]
induced by wedge-sum.  Restricted to $\{H^n_k (|GX|)\}_{k\ge 0}$, this pairing gives $\underset{k\ge0}{\coprod} H^n_k (|GX|)$ the structure of a topological permutative category \cite{s1} for all $n\ge 0$.  These operations -- wedge sum, suspension, stabilization -- commute up to natural isomorphism.  So letting $H_k (|GX|) = \underset{\Sigma}{\underset{\to}{\lim}} H^n_k (|GX|)$ and $H(|GX|) = \underset{k}{\underset{\to}{\lim}} H_k (|GX|)$ we see
that $\underset{k\ge0}{\coprod} H_k (|GX|)$ is also a topological permutative category under wedge-sum.  Waldhausen's definition of $A(X)$ is

\begin{definition} If $X$ is a pointed connected simplicial set, $A(X) = \Omega B (\underset{k\ge 0}{\coprod}  BH_k (|GX|)) \simeq \Bbb Z \times BH (|GX|)^+$
\end{definition}

where \lq\lq $\simeq$ \rq\rq\ means \lq\lq weakly equivalent \rq\rq. If $X$ is a connected basepointed space, $A(X)$ is defined to be $A(Sing(X))$ (here $Sing(X)$ denotes the basepointed total singular complex of $X$). Similarly if $X$ is a connected pointed simplicial space, $A(X) \overset{\text{def}}{=} A(Sing|X|)$. $A(X)$ is a homotopy functor, in the sense that $X\simeq Y$ implies  $A(X)\simeq A(Y)$.
\vskip.2in

We will use the notation $\Sigma U$ to denote the reduced suspension of $U$.  If $|X| \simeq\Sigma|Z|$, where $Z$ is a simplicial space connected in each degree, then $GX$ is weakly equivalent to the simplicial James monoid $JZ$, which in degree $q$ is the free monoid on the pointed space $Z_q$.  In this case we can use $JZ$ in place of the Kan loop group $GX$ in the above constructions. The result is an equivalence $A(\Sigma Z) \simeq \Omega B (\underset{k\ge0}{\coprod} BH_k (|JZ|))$.

In studying $A(\Sigma Z)$, we will use constructions from \S2 of \cite{w2}.  The first, due to Segal, generalizes the bar construction which associates to a monoid its nerve.  Thus, a \underbar{partial monoid} is a basepointed set $M$ together with a partially defined composition law 
\[
M\times M\supset M_2 \overset\mu{\hookrightarrow}  M
\]
satisfying
\begin{itemize}
\item $M\vee M \subset M_2$, with $\mu(*,m) = \mu(m,*) = m$ and 
\item $\mu(\mu(m_1,m_2), m_3) = \mu(m_1,\mu(m_2,m_3))$ in the sense that if one is defined then so is the other, and they are equal.
\end{itemize}
To avoid confusion, we include as part of the data associated to a partial monoid the sets
\[
\{ M_p\subseteq\text{ composable }p\text{-tuples in }M\}_{p\ge 0}
\]
where $M_0 = *$ , $M_1 = M$ , $M_2$ is as given above, and \lq\lq composable\rq\rq\ means that the iterated product is defined. We require that the face and degeneracy maps, defined in the usual way, induce a simplicial structure on $\{M_p\}_{p\ge 0}$ . The \underbar{nerve} of $M$ is then the simplicial set
\[
\{[p]\mapsto M_p\}.
\]

Let $M$ be a monoid, $S$ a set on which $M$ acts on both sides, with $(ms)m' = m(sm')$ .One can then form the \underbar{cyclic bar construction} of $M$ with coefficients in $S$.  It is a simplicial set $N^{cy}(M,S)$ which in degree $q$ is $M^q\times S$.  The face and degeneracy maps are given by the following formulae (see \cite{w2}, \S2):

\begin{align*}
\partial_0(m_1,\dots,m_q;s) &= (m_2,m_3,\dots,m_q;sm_1)\\
\partial_i(m_1,\dots,m_q;s) &= (m_1,\dots,m_i m_{i+1},\dots, m_q;s),\qquad
 1\le i \le q-1\\
\partial_q(m_1,\dots,m_q;s) &= (m_1,\dots,m_{q-1};m_q s)\\
s_i(m_1,\dots,m_q;s) &= (m_1,\dots,m_i,1,m_{i+1},\dots;s), \qquad 
0\le i \le q.
\end{align*}

As noted in \cite{w2}, the double bar construction is a special case of this cyclic bar construction where $S$ appears as a cartesian product of a left $M$-set and a right $M$-set.  When $M$ is a grouplike monoid ($\pi_0M$ is a group) and $S=M$ with induced $M$ action on the left and right, there is a weak equivalence between $N^{cy}(M,M)$ and $BM^{S^1} = $ the free loop-space of $BM$.  The construction of $N^{cy}(M,S)$ extends in the obvious way to a simplicial monoid $M$ acting on a simplicial set $S$.
\vskip.2in

It is sometimes the case that $S$ itself is a partial monoid which admits a left and right $M$-action.  In this case one wants to know that the cyclic bar construction $N^{cy}(M,S)$ can be done in such a way as to be compatible with the partial monoid structure on $S$. So let $M$ be a monoid.  By a \underbar{left $M$-monoid} we will mean a partial monoid $E$  together with a basepointed $M$-action $M\times E\to E$ compatible with the partial monoid structure on $E$ in the sense that for each $m\in M$ the map
\begin{gather*}
E\to E\;;\qquad e\mapsto me 
\end{gather*}
is a homomorphisms of partial monoids.  A \underbar{right} $M$-monoid is similarly defined, and an \underbar{$M$-bimonoid} is a partial monoid equipped with compatible left and right monoid structures.  Given such an $M$-bimonoid $E$ satisfying an obvious ``saturation''\ condition [p. 367, W2], the semidirect product $M\ltimes E$ is the partial monoid with composition given by $(m,e) (m',e') = (mm', (em') (me'))$  whose nerve is the simplicial set
\[
\{[p]\mapsto M^p\times E_p\}_{p\ge 0},
\]
where $E_p$ is the set of composeable $p$-tuples (as defined above).  When $E$ is commutative, we will write the product in $E$ additively. Clearly this construction can be done degreewise when $M$ and $E$ are simplicial.  If the partial monoid structure on $E$ has not been specified, we will assume it is the trivial one, for which $E_p = \overset p{\vee} (E,*)$ .  The simplicial set $\{[p]\mapsto \overset p{\vee} (E,*)\}$ is a left (resp.\ right resp.\ bi-) monoid over $M$ if $E$ is.  Iteration of this construction yields an $M$-monoid structure on a space whose realization is an iterated suspension of $|E|$, and which agrees with that induced by the given action of $M$ on $E$ together with the trivial action on the suspension coordinates. A left (resp\@. right) \underbar{$M$-space} is a space equipped with a left (resp\@. right) action of M, not necessarily basepointed. Obviously, a left (resp\@. right) $M$-monoid is a left (resp\@. right) $M$-space, though not conversely. Similarly, we may define an $M$-bispace to be one equipped with  compatible left and right $M$ actions.
 \vskip.2in

Suppose that $A$ is a monoid, and that we are given an inclusion $A\hookrightarrow M$ of $A$  into an $A$-bispace which is a morphism of $A$-bispaces. Identifying $A$ with its image in $M$ , one may define a partial monoid by
\[
M_p := \overset p{\vee} (M,A) = \overset
p{\underset{j=1}{\cup}} A^{j-1} \times M \times A^{p-j}.
\]
The nerve of the resulting partial monoid $\{[p] \mapsto M_p\}$ is referred to as a \underbar{generalized wedge} (this construction is due to Waldhausen). Taking $A = \{pt\}$ yields $\{[p] \mapsto \overset p{\vee} (M,*)\}$ whose realization is
homeomorphic to $\Sigma|M|$ (as discussed above).  It is often useful to approximate the nerve of a monoid $M$ by generalized wedges.  A straightforward argument [Lemma 2.2.1, W2] shows that if $A\hookrightarrow M$ is an $(n-1)$-connected inclusion of monoids, the induced inclusion 
\[
\{[p]\mapsto \overset p{\vee} (M,A)\} \hookrightarrow \{[p] \mapsto \overset p{\vee}
(M,M)\} = NM
\]
is $(2n-1)$-connected.  As one can easily see, a fixed monoid may admit many different partial monoid structures. A key result concerning the nerve of a semidirect product is provided by [Lemma 2.3.1, W2]; it provides a map 
\[
u : diag (N^{cy} (M,NE))\to N(M\ltimes E)
\]

which is a weak equivalence when $\pi_0(M)$ is a group. Here $M$ is a simplicial monoid, $E$ a simplicial $M$-bimonoid, and $N^{cy}(M,NE)$ the cyclic bar construction of $M$ acting on the nerve of the partial monoid $E$. The ``diagonal'' structure is with respect to the simplicial coordinates coming from $N^{cy}(_-)$ and $NE$. The saturation condition referred to above, as well as the condition that $\pi_0(M)$ is a group, will always be satisfied in our case.  As we will need to know $u$ explicitly later on, we recall that it is given on $n$-simplices by the formula [p.\ 369, W2]:
\begin{equation}\label{eqn:1.1.2}
u(m_1,\dots,m_n;e_1,\dots,e_n) 
= (m_1,(\overset
n{\underset{i=1}{\prod}} m_i)e_1 m_1; m_2,(\overset
n{\underset{i=2}{\prod}} m_i) e_2(m_1 m_2);\cdots; m_n, m_n e_n
(\overset n{\underset{i=1}{\prod}} m_i)).
\end{equation}

Let us return to considering $JZ$ and $H^n_k(|JZ|)$ (for connected $Z$).  We will be interested in the case when $Z = X\vee Y$. For a positive integer r, let $F_r(X,Y) \subset J(X\vee Y)$ denote the subset which in each degree consists of elements of word-length at most $r$ in $Y$.  This is clearly a simplicial subset. There is also a natural partial monoid structure on $F_r(X,Y)$, where two elements are composable if their product in $J(X\vee Y)$ lies in $F_r(X,Y)$. We will denote $F_1(X,Y)/F_0(X,Y)$ by $\overline F_1(X,Y)$. Note that $\overline F_1(X,Y) \cong J(X)_+\wedge Y\wedge J(X)_+$ for connected $Y$. Now consider the projection maps
\begin{gather}
F_1(X,Y)\twoheadrightarrow F_1(X,*)= JX\;\;
\text{induced by}\;\;Y\twoheadrightarrow *, \label{eqn:f1}\\
F_1(X,Y) \twoheadrightarrow \overline F_1(X,Y).\label{eqn:f2}
\end{gather}

We observe that all spaces in sight admit compatible left and right $J(X)$-actions, and these maps commute with the action. Also both projection maps are partial monoid maps, where we take the trivial monoid structure on $\overline F_1(X,Y)$, which with the given left and right actions of $J(X)$ is a $J(X)$- bimonoid. Let
\[
\overline M^n_k(|F_1(X,Y)|_+) = \overline M^n_k(|J(X,Y)|_+) \cap M^n_k(|F_1(X,Y)|_+).
\]

\begin{lemma} \label{lemma:p1p2}
The projection maps in (\ref{eqn:f1}), (\ref{eqn:f2})  induce maps
\begin{align*}
&\overline M_k^n(|F_1(X,Y)|_+) \overset{p_1}{\twoheadrightarrow}
\overline M_k^n(|J(X)|_+)\cong H_k^n(|J(X)|), \\
&\overline M_k^n(|F_1(X,Y)|_+) \overset{p_2}{\twoheadrightarrow}
M_k^n(|\overline F_1(X,Y)|).
\end{align*}
All four spaces admit compatible left and right actions of $H^n_k(|J(X)|)$, and these maps commute with the actions. As a result, these projection maps induce a map of generalized wedges
\begin{align*}
\{[p] &\mapsto \overset {p}{\vee}\; (\overline M^n_k (|F_1(X,Y)|_+), H^n_k(|JX|))\}\\
\to \{[p] &\mapsto \overset {p}{\vee}\; (H^n_k(|JX|) \ltimes M^n_k (|\overline F_1(X,Y)|), H^n_k(|JX|))\}
\end{align*}
where the semi-direct product $H^n_k(|J(X)|) \ltimes M^n_k(|\overline F_1(X,Y)|)$ is formed using the trivial monoid structure on $M^n_k(|\overline F_1(X,Y)|)$ together with the given left and right (basepointed) $H^n_k(|J(X)|)$-actions.
\end{lemma}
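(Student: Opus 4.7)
The plan is to observe that both projections (\ref{eqn:f1}) and (\ref{eqn:f2}) are $JX$-equivariant morphisms of partial monoids, apply the mapping-space functors, and then package $p_1$ and $p_2$ together into a single map landing in a semi-direct product before extending to generalized wedges.

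First, I would note that the inclusion $F_0(X,Y) = JX \hookrightarrow F_1(X,Y)$ realises $F_1(X,Y)$ as a simplicial $JX$-bispace containing $JX$ as a sub-monoid, since multiplication by $JX$ on either side preserves $Y$-word-length. Both (\ref{eqn:f1}) and (\ref{eqn:f2}) are morphisms of $JX$-bispaces: the former to the monoid $JX$ itself, the latter to $\overline F_1(X,Y) \cong J(X)_+ \wedge Y \wedge J(X)_+$ equipped with the trivial partial-monoid structure, under which it becomes a $JX$-bimonoid. Applying the mapping-space functor $M^n_k(|\,\cdot\,|_+)$ and restricting to the appropriate path components yields $p_1$ and $p_2$ together with their $H^n_k(|JX|)$-equivariance on the two sides; compatibility with the wedge-sum pairings is automatic from the functoriality of $\mathrm{Map}(\bigvee^k S^n, \bigvee^k S^n \wedge |\,\cdot\,|_+)$ with respect to monoid maps.

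Next, to see that $p_1$ takes values in $H^n_k(|JX|) \cong \overline M^n_k(|JX|_+)$ rather than merely in $M^n_k(|JX|_+)$, observe that the projection $J(X \vee Y) \twoheadrightarrow JX$ admits a section induced by $X \hookrightarrow X \vee Y$, and is therefore a retraction of simplicial monoids. Consequently any $J(X \vee Y)$-equivariant self-homotopy equivalence of $\bigvee^k S^n \wedge |J(X \vee Y)|_+$ whose matrix of coefficients lies in $F_1(X,Y)$ pushes forward to a $JX$-equivariant self-homotopy equivalence of $\bigvee^k S^n \wedge |JX|_+$, so its image indeed lies in the required path components. Surjectivity of $p_1$ is witnessed by the section, and surjectivity of $p_2$ by adjoining the identity diagonal of $JX$ to any lift.

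Finally, the quotient $F_1(X,Y) \twoheadrightarrow \overline F_1(X,Y)$ crushes $F_0(X,Y) = JX$ to the basepoint, so $p_2$ sends the subspace $H^n_k(|JX|) \subset \overline M^n_k(|F_1(X,Y)|_+)$ to the basepoint, while $p_1$ restricts to the identity on this subspace. Thus the pair $(p_1,p_2)$ defines an $H^n_k(|JX|)$-bispace map
\[
\overline M^n_k(|F_1(X,Y)|_+) \ \longrightarrow \ H^n_k(|JX|) \ltimes M^n_k(|\overline F_1(X,Y)|)
\]
carrying the sub-bispace $H^n_k(|JX|)$ identically onto the canonical copy of $H^n_k(|JX|)$ inside the semi-direct product. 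Applying the generalized-wedge construction $[p] \mapsto \bigvee^p(-, H^n_k(|JX|))$ to this map of pairs then produces the asserted map of simplicial sets. The only delicate ingredient is the retraction argument placing $p_1$ into $H^n_k(|JX|)$; the remaining claims are formal consequences of functoriality and of the compatibility of the partial-monoid and bispace structures on $F_1(X,Y)$ with its two quotients.
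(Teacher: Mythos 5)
Your argument follows the same route as the paper's proof: observe that (\ref{eqn:f1}) and (\ref{eqn:f2}) are $JX$-bispace maps, apply $M^n_k(|\cdot|_+)$, package $p_1$ and $p_2$ into a single $H^n_k(|JX|)$-equivariant map into the semi-direct product, and then pass to generalized wedges. You supply a bit more detail than the paper does — in particular the retraction argument showing $p_1$ lands in the correct path components and the check that the pair map carries the sub-monoid $H^n_k(|JX|)$ to its canonical copy in the semi-direct product — but these are points the paper's proof elides rather than handles differently, so the approach is essentially identical.
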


\begin{proof} The left and right actions of $H^n_k(|J(X)|)$ on $H^n_k(|J(X \vee Y)|)$ induce compatible left and right actions on $\overline M^n_k(|F_1(X,Y)|_+)$. These actions are functorial in $Y$, hence natural with respect to the map $Y\twoheadrightarrow *$ inducing the projection map $p_1$. It is not hard to see that these actions are also compatible with the collapsing map which induces $p_2$ . It follows that $p_1$ and $p_2$, taken together, induce a map which on the level of sets is represented as
\begin{equation}\label{eqn:p1p2}
\overline M_k^n(|F_1(X,Y)|_+) \overset{p_1 \times p_2}{\longrightarrow} H_k^n(|J(X)|)\times M_k^n(|\overline F_1(X,Y)|). 
\end{equation}

Taking the trivial monoid structure on $M^n_k(|\overline F_1(X,Y)|)$, we get an $\overline H^n_k(|J(X)|)$ - bimonoid structure on this space, so that the R.H.S. of (\ref{eqn:p1p2}) is the underlying set of a semi-direct product.  The partial composition law in this semi-direct product amounts to a description of a compatible left and right action of $H^n_k(|J(X)|)$. This action is given explicitly by
\begin{align*}
x(y,a) &= (x,*)(y,a) = (xy,xa),\\
(y,a)x &= (yx,ax),\\
x,y &\in H_k^n(|J(X)|),\;\; a \in M_k^n(|\overline F_1(X,Y)|).
\end{align*}

The projection maps $p_1$ and $p_2$ preserve both the left and right actions of $H^n_k(|J(X)|)$, and therefore so does $p_1\times p_2$ under the actions described above. This implies that $p_1\times p_2$ induces a map of generalized wedges, as claimed.
\end{proof}
\vskip.5in


\subsection{Goodwillie Calculus} This section recalls results from Goodwillie's Calculus ([G1] -- [G3]) that will be used later on. We consider functors $F : \underline C \to \underline D$ , where $\underline C$ is either $U,T,U(C)$ or $T(C)$ and $\underline D$ is $T,T(C)$ or the category $Sp$ of spectra.  Here $U$ is the category of (Hausdorff) topological spaces, $T$ the category of basepointed spaces in $U$.
$U(C)$ denotes the corresponding category of spaces over  $C\in obj(U)$, and $T(C)$ the category of basepointed objects in $U(C)$.  Note that an object of $T(C)$ is a retractive space $Y$ over $C$, i\@.e\@., $r :Y\to C$ comes equipped with a right inverse  $i\;(r \circ i = id)$.  Each of these choices of $\underline C$ is a  closed model category in the sense of Quillen, so one has the usual constructions of homotopy theory. When $\underline C$ = $U(C)$ or $T(C)$ we denote by $C_n$ the full subcategory of $n$-connected objects in $\underline C$. 
As in [G2], a map of spaces or spectra is called \underbar{$n$-connected} if its homotopy fibre is $(n-1)$-connected. In all of these categories one has a standard notion of weak equivalence, and $F$ is called a \underbar{homotopy functor} if $F$ preserves weak equivalences. We will only be concerned with homotopy functors.
\vskip.2in

Let $S$ be a finite set, $C(S)$ the category of subsets of $S$ with morphisms corresponding to inclusions.  An $S$-cube in $C$ is a covariant functor $G : C(S) \to \underline C$.  If $S = \{1,2\dots,n\} = \underline n\,, G$ is called an $n$-cube.  Associated to an $S$-cube is the homotopy-inverse limit $h(G) = holim(G|_{C_0(S)})$ where $C_0(S)$ denotes the full subcategory of $C(S)$ on all objects except $\phi$.  The natural coaugmentation map $lim(G) \to holim(G)$ induces a natural transformation 
\[
a(G) : G(\phi) \to h(G)\; .  
\]
and $G$ is \underbar{$h$-cartesian} if $a(G)$ is a weak equivalence.  We say $F : \underline C \to\underline D$ (as above) is \underbar{$n$-excisive} if $F\circ G$ is $h$-cartesian for every strongly homotopy co-cartesian $S$-cube $G : S \to \underline C$, where $|S| = n+1$ [Def.3.1, G2].  The condition that $F$ be $n$-excisive becomes less restrictive as $n$ increases. Thus, if $F$ is $n$-excisive, it is $(n+1)$-excisive, but not conversely [Prop\@. 2.3.2, G2].
\vskip.2in

Given a homotopy functor $F$ satisfying certain conditions, there is a natural way of producing a functor $P_n F$ of degree $n$ and a natural transformation $F\to P_n F$.  In fact, $P_n F$ can always be constructed.  Starting with $X\in \text{ obj }(\underline C)$ one can
define an $(n+1)$-cube 
\[
X\underset C {*}(_-) : C (\underline{n+1}) \to \underline C = U(C)\text{ or }T(C);
\]
this associates to $T \subset \underline{n+1}$ the space $X\underset{C}{*}T$ which is the fibrewise join over $C$ of $X$ with the set $T$.  Now let $(T_n F) (X) = holim (F\circ (X\underset{C}{*}(_-))|_{C_0(\underline{n+1})})$. Then $a(F\circ(X\underset{C}{*}(_-)))$ defines a transformation $(t_n F) (X) : F(X) \to (T_n F) (X)$.  One easily sees that $X\mapsto (T_n F) (X)$ is again a homotopy functor on $\underline C$ and that $(t_n F) := (t_n F) (_-)$ defines a natural transformation from $F$ to $T_n F$.  Note that $X\underset{C}{*}(_-) :
C(\underline{n+1}) \to \underline C$ is a (strongly) homotopy co-cartesian diagram in $\underline C$, so that $t_n F$ is an equivalence if $F$ is of degree $n$.  Iteration of this construction yields $P_n F$ which is by definition the homotopy colimit of the directed system $\{T^i_n F, t_n T_n^i F\}$. The transformations $t_n T^i_n F$ induce a natural transformation $p_n F : F \to P_n F$.  Choice of a distinguished element $(m+1) \in \underline{m+1}$ induces an inclusion $\underline m \to \underline{m+1}$ and hence a natural transformation $C( \underline{m})\to  C(\underline{m+1})$.  This in turn induces a natural transformation of directed systems 
\[
\{T^i_n F, t_n T_n^i F\} \to \{T^i_{n-1} F,t_{n-1} T^i_{n-1} F\}
\]
and hence a natural transformation  $P_m F\overset{q_m F}{\longrightarrow} P_{m-1} F$.  Different choices of $m$ yield naturally equivalent choices of $q_m F$.  The \underbar{Goodwillie Taylor series of $F$} is then by definition the inverse system $\{P_n F, q_n F\}$, which is best viewed as a tower together with the natural transformations $p_n F$:

\[
\diagram
 & & & \vdots \dto^{q_3F} \\
 & & & P_2F \dto^{q_2F} \\
 & & & P_1F \dto^{q_1F}  \\
 F \xto[0,3]^{p_0F} \xto[-1,3]^{p_1F} \xto[-2,3]^{p_2F} & & & P_0 F
\enddiagram
\]

The closed diagrams in this tower are homotopy commutative.  The $\text{n}^{\text{th}}$ homogeneous part of $F$ is by definition the homotopy fibre of $q_nF$:
\[
D_n F := hofibre(P_n F \overset{q_n F}{\longrightarrow} P_{n-1} F)\;.
\]
\vskip.2in

\begin{definition}([Definition 4.1, G2]) $F$ is \underbar{stably $n$-excisive} if the following statement holds for some numbers $c$ and 
$\kappa$ :\newline
\noindent $\underline{E_n(c,\kappa)}$ : If $G : C(\underline{n+1}) \to \underline{C}$ is any strongly co-Cartesian $(n+1)$-cube such that for all $s\in S$ the map $G(\phi)\to G(s)$ is $k_s$ connected and $k_s\ge \kappa$, then the diagram $G(C(\underline{n+1}))$ is $(-c + \sum k_s)$-connected.
\end{definition}

In this case $D_n F$ is \underbar{homogeneous} of degree $n$ that is, it is stably $n$-excisive and $P_i D_n F \simeq *$ for $i<n$ [Prop. 1.11, G3].  We will write $\underline{P_n F}$ for $hofibre(F\overset{p_n F}{\longrightarrow} P_n F)$, and $\underline{P^m_n F}$ for $hofibre(P_n F \to P_m F)$. The functor $D_n(F)$ is referred to as the \underbar{$\text{n}^{\text{th}}$ differential} of $F$ (at $*$). One also wants to know when the connectivity of $F\overset{p_n F}{\longrightarrow} P_n F$ tend to $\infty$ as $n$ tends to $\infty$.  From [Def. 4.2, G2], $F$ is $\rho$-analytic if there is some number $q$ such that $F$ satisfies $E_n(n\rho -q, \rho +1)$ for all $n\ge 1$ .

\begin{thm} ([Th. 2.5.21, G3]) The connectivity of $p_n F$ tends to $\infty$ over the category $\underline C_{\rho}$ where $\rho = \rho(F), F : \underline C \to \underline D$.
\end{thm}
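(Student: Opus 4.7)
The plan is to estimate the connectivity of $p_n F : F(X) \to P_n F(X)$ for $X \in \underline C_{\rho}$ directly from the $\rho$-analyticity hypothesis $E_n(n\rho-q,\rho+1)$, and show that the estimate grows linearly in $n$. Since $P_n F$ is constructed as the homotopy colimit of the iterates $T_n^i F$, I would split the problem into two sub-tasks: first, bound the connectivity of $t_n F : F \to T_n F$ at a $\rho$-connected $X$; then, propagate the same bound through every iterate $t_n T_n^i F$ so that the telescope preserves it.

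For the first sub-task, consider the strongly homotopy cocartesian $(n+1)$-cube $X\underset{C}{*}(-) : C(\underline{n+1}) \to \underline C$ used to define $T_n F$. By standard properties of the fibrewise join, each edge $G(\phi) \to G(\{s\})$ is the inclusion of $X$ into its fibrewise cone and is therefore $(\rho+1)$-connected whenever $X$ is $\rho$-connected; in particular the threshold $\rho+1$ appearing in $E_n(n\rho - q,\rho+1)$ is met. The hypothesis then yields that $F\circ(X\underset{C}{*}(-))$ is $h$-cartesian of connectivity at least
\[
-(n\rho - q) + (n+1)(\rho+1) = n + \rho + q + 1.
\]
Since $T_n F(X)$ is the homotopy limit of the punctured cube, this is precisely the statement that $t_n F(X)$ is $(n+\rho+q+1)$-connected on $\underline C_\rho$.

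The hard part will be the second sub-task: propagating this bound through the iteration $\{T_n^i F\}$. I would try to show that $T_n$ preserves $\rho$-analyticity on $\rho$-connected inputs, so that $T_n^i F$ again satisfies $E_n$ with threshold $\rho+1$ and an adjusted constant that does not spoil the linear-in-$n$ bound; the cube argument of the previous paragraph then applies verbatim with $T_n^i F$ in place of $F$. An alternative is to apply $T_n^i$ directly to the cube $F\circ(X\underset{C}{*}(-))$ and appeal to Goodwillie's inductive lemmas (from his calculus papers) showing that $T_n$ only degrades cartesian-ness of a strongly cocartesian cube by a controlled amount. Either route gives that $t_n T_n^i F(X)$ is also at least $(n+\rho+q+1)$-connected for all $i\ge 0$ and $X\in\underline C_\rho$. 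Consequently every map in the telescope $F(X) \to T_n F(X) \to T_n^2 F(X) \to \cdots$ has connectivity at least $n+\rho+q+1$, and so does the resulting map $p_n F(X) : F(X) \to P_n F(X)$. Letting $n \to \infty$ makes this lower bound tend to infinity, which completes the proof.
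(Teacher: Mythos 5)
This theorem is cited from Goodwillie's \emph{Calculus III} ([Th.~2.5.21, G3]); the paper states it and does not give a proof, so there is no in-paper argument to compare against. Evaluated on its own merits, your first step is correct and matches the opening move in Goodwillie's proof: for a $\rho$-connected $X$ each edge $G(\phi)\to G(\{s\})$ of the cube $X\underset{C}{*}(-)$ is $(\rho+1)$-connected, so $E_n(n\rho - q,\rho+1)$ applies with $k_s = \rho+1$ and gives that $t_n F(X)$ is $\bigl(-(n\rho - q) + (n+1)(\rho+1)\bigr) = (n+\rho+q+1)$-connected, which does grow linearly in $n$.

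The second step is, however, the substance of the theorem, and your proposal leaves it as two unexamined sketches. Propagating the connectivity estimate through the iterates $T_n^i F$ is not a formal naturality argument: $T_n$ is a homotopy limit over a punctured $(n+1)$-cube, and such limits do not automatically preserve the connectivity of a natural transformation without a compensating gain elsewhere. What saves the day in Goodwillie's proof is that after one application of $T_n$ the functor is being evaluated on the more highly connected objects $X\underset{C}{*}T$, and the analyticity hypothesis therefore gives a correspondingly stronger estimate at the next stage; organizing the resulting induction so the bound never degrades below $(n+\rho+q+1)$ is precisely the technical core of [G3, \S 1]. Your proposal simply asserts that ``either route gives that $t_n T_n^i F(X)$ is also at least $(n+\rho+q+1)$-connected for all $i\ge 0$'' without carrying out that induction, and neither route you name is obviously elementary. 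This is a genuine gap, and it is exactly where the difficulty of the theorem lies.
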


In analogy with functions, $\underline C_\rho$ may be thought of as the \underbar{disk of convergence} of $F$.  In applying this calculus to $F$, it is natural to restrict one's attention to the subcategory $\underline C_{\rho(F)}$ which in general is the largest subcategory of $\underline C$ for which the Taylor series of $F|_{\underline C_{\rho(F)}}$ converges (in the homotopy-theoretical sense).  Within this range it provides a powerful machinery for analyzing $F$, as well as determining the effect of a natural transformation $\eta :F_1 \to F_2$ on homotopy groups.  It is clear from the above theorem that $\eta$ will induce a weak equivalence when restricted to $\underline C_\rho \;(\rho = max(\rho(F_1),\rho(F_2)), F_i : \underline C \to \underline D)$ if $\eta$ induces an equivalence on differentials: 
\[
D_n(\eta) : D_n(F_1)\overset{\cong}{\to} D_n(F_2),
\]
under the condition that $P_0(F_i) \simeq *$.  However, there is another way of getting at $\eta$.  Assume first that $\underline C = U(C)$ and that $F_i : \underline C \to \underline D$ have the same modulus $\rho$ for $i$ = 1,2.  Let $(X,p : X\to C)$ be an object in $U(C)$.  Then $(X,p : X\to C)$ defines a natural transformation $\nu_{(X,p)} : U(X) \to U(C)$ given on objects by
\[
\nu_{(X,p)}(Y,r : Y\to X) = (Y,p \circ r : Y \to C).
\]
Analyticity is preserved by the natural transformation $\nu^*_{(X,p)} : F\to F\circ \nu_{(X,p)}$.  The next result of Goodwillie's concerns only $1^{\text{st}}$ differentials, and is is contained in Theorems 5.3 and 5.7 of [G2].

\begin{theorem}\label{thm:conv} If $F_1,F_2: U(C) \to \underline D$ are $\rho$-analytic, and $\eta : F_1 \to F_2$ is a natural transformation such that the square 
\[
\diagram
P_1(\nu^*_{(X,p)}F_1) \rrto^{P_1(\nu^*_{(X,p)}\eta)} 
 \dto_{q_1(\nu^*_{(X,p)}F_1)}  & & P_1(\nu^*_{(X,p)}F_2)
                      \dto^{q_1(\nu^*_{(X,p)}F_2)}  \\
P_0(\nu^*_{(X,p)}F_1) \rrto^{P_0(\nu^*_{(X,p)}\eta)}  & & P_0(\nu^*_{(X,p)}F_2)
\enddiagram
\]
is homotopy-cartesian for every $(X,p)$ in $U(C)$, then for every $f : Y\to X$ in $U(C)_\rho$ the diagram
\[
\diagram
F_1(Y) \rto^{\eta(Y)} \dto_{F_1(f)}  & F_2(Y) \dto^{F_2(f)}  \\
F_1(X) \rto^{\eta(X)}  & F_2(X)
\enddiagram
\]
is homotopy-cartesian.
\end{theorem}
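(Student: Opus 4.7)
The plan is to pass to the homotopy fibre functor of $\eta$ and analyze its Goodwillie tower on the subcategory $U(C)_\rho$. Define $H:U(C)\to\underline D$ by $H(Z):=hofibre(F_1(Z)\overset{\eta(Z)}{\to}F_2(Z))$; it is $\rho$-analytic because $F_1$ and $F_2$ are. A square of spaces is homotopy-cartesian iff the induced map on vertical (equivalently horizontal) homotopy fibres is a weak equivalence, so the conclusion of the theorem translates to: $H(f):H(Y)\to H(X)$ is a weak equivalence for every $f$ in $U(C)_\rho$. The hypothesis square, being the $(P_1,P_0)$-square of $\eta$, is homotopy-cartesian iff $D_1(\nu^*_{(X,p)}H)\simeq *$ for every $(X,p)\in U(C)$, using that $D_1=hofibre(q_1)$ commutes with both $\nu^*$ and the formation of $H$. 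Thus the theorem reduces to the following claim: if $H:U(C)\to\underline D$ is $\rho$-analytic and $D_1(\nu^*_{(Z,q)}H)\simeq *$ for every $(Z,q)\in U(C)$, then $H(f)$ is a weak equivalence for every morphism $f$ in $U(C)_\rho$.

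Fix such an $f:Y\to X$. I would then work inside $U(X)$ by setting $G:=\nu^*_{(X,p)}H:U(X)\to\underline D$, which inherits $\rho$-analyticity and also satisfies $D_1(\nu^*_{(Z,q)}G)\simeq *$ at every $(Z,q)\in U(X)$. The morphism $f$ lifts to $\tilde f:(Y,f)\to(X,\mathrm{id}_X)$ in $U(X)$, with $G(\tilde f)=H(f)$. Because $(X,\mathrm{id}_X)$ is terminal in $U(X)$ the fibrewise joins $X*_X T$ collapse to $X$, so $P_nG(X,\mathrm{id}_X)=G(X,\mathrm{id}_X)$ for every $n$. By the convergence theorem quoted in the excerpt (Theorem 2.5.21 of [G3]), on $U(X)_\rho$ the natural map $G(Y,f)\to \underset{n}{holim}\,P_nG(Y,f)$ is a weak equivalence. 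Consequently it suffices to show, for each $n$, that $P_nG(\tilde f):P_nG(Y,f)\to G(X,\mathrm{id}_X)$ is a weak equivalence, and to pass to the homotopy limit.

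Proceed by induction on $n$. For $n=0$ one has $P_0G(Y,f)=G(X,\mathrm{id}_X)$ (as the $0$-excisive approximation is constant at the terminal object), so $P_0G(\tilde f)$ is an equivalence. For the inductive step the fibration sequence $D_nG\to P_nG\to P_{n-1}G$ combined with the inductive hypothesis reduces the task to showing $D_nG(\tilde f)$ is a weak equivalence, equivalently that $D_nG(Y,f)\simeq *$ since $D_nG(X,\mathrm{id}_X)\simeq *$. For $n=1$ this is exactly the hypothesis $D_1G\simeq *$. The principal obstacle is the case $n\geq 2$: one must propagate the vanishing of $D_1$ at \emph{every} point to the vanishing of $D_n$ at every point. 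My plan is to exploit the chain rule for Goodwillie derivatives to express the $n$-th cross-effect of $G$ (which determines $D_nG$) in terms of iterated first derivatives of $G$ at suitable nearby base points; each such first derivative is contractible by hypothesis, forcing $D_nG\simeq *$ throughout $U(X)$. An alternative route, closer to Goodwillie's own argument, is a direct inductive analysis of $T_nG=holim(G\circ(Y*_X(-))|_{C_0(\underline{n+1})})$ which shows under the hypothesis that $P_nG\simeq P_{n-1}G$ on $U(X)_\rho$ without separately identifying each $D_nG$ with $*$. Either route closes the induction, and passing to the homotopy limit of the $P_n$ tower yields $H(f)=G(\tilde f)$ as a weak equivalence, completing the proof.
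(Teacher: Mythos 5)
The paper does not prove this statement; it cites Theorems 5.3 and 5.7 of [G2], so there is no internal proof to compare against, and I assess your argument on its own terms. Your preliminary reductions are all correct: translating the hypothesis into $D_1(\nu^*_{(Z,q)}H)\simeq *$ at every base point, where $H=\mathrm{hofibre}(\eta)$; passing to $G=\nu^*_{(X,p)}H$ on $U(X)$; observing that $P_0G$ is constant and $P_nG(X,\mathrm{id}_X)=G(X,\mathrm{id}_X)$ because fibrewise joins with the terminal object collapse; and reducing the inductive step to $D_nG(Y,f)\simeq *$ for all $n\geq 1$, the $n=1$ case being the hypothesis. But as you yourself flag, the case $n\geq 2$ is not established, and that is a genuine gap. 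The tool you name, the chain rule, is about derivatives of composites $F\circ G$ and does not apply here. What does the work is the identity expressing the $n$-th cross-effect as the $n$-fold iterated first cross-effect in the successive added variables, $cr_n = cr_1\circ\cdots\circ cr_1$; after multilinearizing and interchanging the stabilization colimits with $cr_1$ (a homotopy fibre, which commutes with loops and with filtered homotopy colimits, at least once one is in spectra), one writes $D_nG$ at any base point as an iterated fibre of $D_1$'s evaluated at base points of the form $X\vee\Sigma^{m_1}Y_1\vee\cdots$, each of which is contractible by the hypothesis precisely because the hypothesis is stated at every $(Z,q)$. This closes the gap, but it must actually be written out with care about the order of the limits; it is not a named-theorem citation. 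Note also that this step uses no analyticity at all --- analyticity enters only through the convergence of the tower, which you correctly invoke afterward.

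Two further remarks. Your route differs substantially from Goodwillie's own proof in [G2], which is a quantitative connectivity bootstrap using the $E_n(c,\kappa)$ estimates directly: the vanishing of the first derivative upgrades the connectivity estimate on $H(Y)\to H(X)$ past the analyticity bound, and iterating this drives the connectivity to infinity without ever isolating the individual $D_n$'s. Your Taylor-tower decomposition is cleaner conceptually but shifts the entire burden onto convergence of the tower. Finally, a bookkeeping concern you should resolve: if $Y,X\in U(C)_\rho$, i.e.\ the structure maps $Y\to C$ and $X\to C$ are $\rho$-connected, then a map $f:Y\to X$ over $C$ is in general only $(\rho-1)$-connected; so $(Y,f)$ need not lie in $U(X)_\rho$, and you must either adjust the indexing or supply a separate argument that the tower for $G$ still converges at $(Y,f)$.
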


In the case $C = *$ we will denote $hofibre(q_1(\nu^*_{(X,p)} F))$ by $(D_1F)_X$; $p$ in this case is unique.  For $F_2 = A(\Sigma_-) =
A\Sigma(_-)$, $\rho =0$ by [Theorem 4.6, G2].  Theorem \ref{thm:conv} yields

\begin{corollary} If $\eta : F_1 \to A\Sigma(_-)$ is a natural transformation which induces an equivalence $D_1(\eta)_X : (D_1 F_1)_X \overset\simeq{\longrightarrow} (D_1 A\Sigma)_X$ for all connected spaces $X$ and $F$ is $0$-analytic, then $\eta$ induces an equivalence 
\[
\eta(f) : hofibre(F_1(Y) \to F_1(X)) \overset\simeq{\longrightarrow} hofibre(A(\Sigma Y) \to A(\Sigma X)) 
\] 
for all maps $f$ between connected spaces $Y$ and $X$.
\end{corollary}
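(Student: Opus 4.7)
The plan is to derive this corollary as a direct application of Theorem \ref{thm:conv}, taking $C = *$ (so $\underline C = U$), $F_2 = A\Sigma(_-)$, and a common modulus of analyticity $\rho = 0$. The analyticity input is immediate: $A\Sigma(_-)$ is $0$-analytic by [Theorem 4.6, G2] (as already noted just above the corollary), and $F_1$ is $0$-analytic by hypothesis, so both functors satisfy the analyticity assumption of Theorem \ref{thm:conv} with the same $\rho$.

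The substantive step is to verify the h-cartesian hypothesis of Theorem \ref{thm:conv}. For each pointed space $(X, p : X \to *)$ in $U = U(*)$, the vertical $q_1(\nu^*_{(X,p)} F_i) : P_1(\nu^*_{(X,p)} F_i) \to P_0(\nu^*_{(X,p)} F_i)$ has homotopy fibre $(D_1 F_i)_X$ by definition, and the induced horizontal comparison on these fibres is precisely $D_1(\eta)_X$. Since a commutative square is homotopy-cartesian iff the induced map on vertical homotopy fibres is an equivalence, the hypothesis that $D_1(\eta)_X$ is an equivalence for every connected $X$ is exactly the h-cartesian condition of Theorem \ref{thm:conv} at every such $X$.

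Applying Theorem \ref{thm:conv} then yields that for every $f : Y \to X$ in $U_0$ the square
\[
\diagram
F_1(Y) \rto^{\eta(Y)} \dto_{F_1(f)}  & A(\Sigma Y) \dto^{A\Sigma(f)} \\
F_1(X) \rto^{\eta(X)}  & A(\Sigma X)
\enddiagram
\]
is homotopy-cartesian, which is the statement that $\eta$ induces an equivalence on vertical homotopy fibres, completing the proof.

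The one place one might hesitate is a mild quantifier mismatch: Theorem \ref{thm:conv} nominally asks for the h-cartesian condition at every $(X, p) \in U$, while our hypothesis supplies it only for connected $X$. This is not a genuine obstacle, because the conclusion is asserted only for $f : Y \to X$ with $Y, X$ connected; inspection of the proof of Theorem \ref{thm:conv} (see [Theorems 5.3 and 5.7, G2]) shows that only the connected case of the hypothesis enters into this restricted conclusion, and in any event one may simply restrict $F_1$ and $A\Sigma(_-)$ to the connected subcategory on which Goodwillie's calculus has been developed throughout this paper. The ``main obstacle,'' such as it is, is therefore purely a matter of bookkeeping.
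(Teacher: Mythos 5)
Your proposal is correct and follows the same route the paper intends: the corollary is stated as an immediate consequence of Theorem~\ref{thm:conv} with $C=*$ and $F_2 = A\Sigma(\_)$, using $\rho = 0$ from [Theorem 4.6, G2] and the identification $(D_1 F)_X = hofibre(q_1(\nu^*_{(X,p)}F))$, so that the $h$-cartesian hypothesis is precisely the condition that $D_1(\eta)_X$ is an equivalence. Your remark on the connectedness quantifier is a reasonable gloss on a point the paper leaves tacit, and does not change the argument.
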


The result which makes these techniques applicable to the study of $A(X)$ is the computation, due to Waldhausen at $X=pt$,  and Goodwillie for general $X$, of the differentials of $A(X);\text{ here } (Y)$ denotes the retractive object $(Y\vee X, r : Y\vee X\to X)$ thought of as an object in $T(X)$.

\begin{theorem} (Waldhausen [W2], [W4]; Goodwillie [Cor. 3.3, G1])\label{thm:WG} For connected $X$ there is an equivalence
\[
(D_1A\Sigma)_X(Y)\simeq\Omega^\infty\Sigma^\infty(\Sigma(\underset{q\ge1}{\vee} |X^{[q-1]} \wedge Y|))
\]
natural in $X$ .
\end{theorem}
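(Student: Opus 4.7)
The plan is to derive the stated identification by combining Goodwillie's general formula for the first differential of $A$-theory with the James stable splitting of $\Omega\Sigma X$.  Goodwillie's [Cor.~3.3, G1] computes the first derivative of $A$ at an arbitrary connected base space $Z$; evaluated in the direction of a retractive object $Z\vee U$ it can be expressed in the form
\[
(D_1 A)_Z(Z\vee U) \;\simeq\; \Omega^\infty\Sigma^\infty\bigl((\Omega Z)_+\wedge U\bigr).
\]
Applying this with $Z=\Sigma X$ (and identifying the relevant retractive direction coming from $\Sigma(X\vee Y) \simeq \Sigma X \vee \Sigma Y$) yields
\[
(D_1 A\Sigma)_X(Y) \;\simeq\; \Omega^\infty\Sigma^\infty\bigl((\Omega\Sigma X)_+\wedge \Sigma Y\bigr) \;\simeq\; \Omega^\infty\Sigma^\infty\bigl(\Sigma(\Omega\Sigma X)_+\wedge Y\bigr).
\]

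Next I would invoke James's theorem: for a well-pointed connected space $X$ the suspension spectrum of $\Omega\Sigma X$ admits a natural splitting
\[
\Sigma^\infty(\Omega\Sigma X)_+ \;\simeq\; \bigvee_{q\ge 0}\Sigma^\infty X^{[q]},
\]
with the convention $X^{[0]}=S^0$.  Smashing with $Y$, pulling out the single suspension, and reindexing $q\mapsto q-1$ rewrites the preceding display as
\[
(D_1 A\Sigma)_X(Y) \;\simeq\; \Omega^\infty\Sigma^\infty\Bigl(\Sigma\bigvee_{q\ge 1} X^{[q-1]}\wedge Y\Bigr),
\]
which is the claimed formula.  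Naturality in $X$ is inherited from the naturality of both inputs: Goodwillie's formula is natural in the base, and James's splitting is natural in $X$.

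The entire substantive content of the theorem lives in the first step; the rest is bookkeeping.  At $X=*$ the wedge collapses to its $q=1$ term $S^0\wedge Y=Y$ and one recovers $\Omega^\infty\Sigma^\infty\Sigma Y$, which is the first derivative produced by Waldhausen's splitting of $A(*)$ off $\Omega^\infty\Sigma^\infty S^0$ recalled in the introduction.  Goodwillie's extension to general connected $Z$ proceeds by identifying $A(Z)$ with the algebraic $K$-theory of the spherical group ring $S[\Omega Z]$ and then extracting the linearization of the augmentation $A(Z\vee U)\to A(Z)$ via the functor calculus machinery of [G2].  This is the main obstacle, but it is outsourced to the cited references; given those results plus James's splitting, Theorem 4 follows by direct substitution.
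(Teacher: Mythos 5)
Your proposal is correct and takes essentially the same route as the paper: both proofs feed Goodwillie's computation of the first derivative of $A(\text{--})$ (from [G1]/[G2]) into the James/Snaith stable splitting of $\Omega\Sigma(\text{--})$ after substituting $Z=\Sigma X$. Your version is slightly more explicit in writing out the intermediate identification $(D_1 A\Sigma)_X(Y)\simeq\Omega^\infty\Sigma^\infty\bigl(\Sigma(\Omega\Sigma X)_+\wedge Y\bigr)$ before applying the splitting, but the ingredients and the structure of the argument are the same as the paper's.
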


\begin{proof} Goodwillie's computation in [G1] applies to the functor  $A(_-)$ rather than $A\Sigma(_-)$. However, there is a natural equivalence between $A\Sigma(_-)$ and the restriction of $A(_-)$ to the subcategory of (basepointed) suspension spaces. Again, by Goodwillie, the differential $(D_1A\Sigma)_X(Y)$  may be computed as the differential (at $\Sigma X$) of the functor
\[
(Y) \to  A(\Sigma(Y\vee X)\to \Sigma X) := hofibre(A(\Sigma Y\vee\Sigma X)\to A(\Sigma X))
\]
which by [G2], together with the Snaith splitting of the functor $\Omega\Sigma(_-)$ yields the result.
\end{proof}

For a functor $F$ with range either $T$, $T(C)$, or $Sp$, $F$ is said to be \underbar{continuous} if the natural map of $Hom$-spaces $Hom(X,Y)\to Hom(F(X),F(Y))$ is continuous, and \underbar{finitary} if it satisfies the colimit axiom [G3]. Waldhausen's functor $A(_-)$ is both continuous and finitary.

\begin{lemma} If $X = \{X_k\}_{k\ge 0}$ is a simplicial space and $X_k$ is 1-connected for each $k$, then
\[
\Phi_A : | [k] \mapsto \overline A(X_k) | \overset\simeq{\longrightarrow} \overline A(| [k] \mapsto X_k|).
\]
\end{lemma}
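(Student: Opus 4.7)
The plan is to reduce the lemma to a statement about each layer of the Goodwillie tower of $\overline{A}$, exploiting the fact that $\overline{A}$ is analytic on the category of $1$-connected based spaces (by [Theorem 4.6, G2]) and that homogeneous functors commute with geometric realization. The two key inputs are Goodwillie's convergence theorem (stated above as Theorem 2) and the classification of homogeneous functors as smash-products with derivative spectra.

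\emph{Reduction to the Taylor tower.} Each $X_k$ is $1$-connected, so $|X|$ is $1$-connected as well, and both lie in the analyticity disk $\underline{C}_1$ of $\overline{A}$. The convergence theorem gives $\overline{A}(Z) \simeq \mathrm{holim}_n P_n\overline{A}(Z)$ on this subcategory, with the connectivity of $\overline{A}(Z)\to P_n\overline{A}(Z)$ tending to $\infty$. Consequently, it suffices to show that
$$ \bigl|[k] \mapsto P_n\overline{A}(X_k)\bigr| \longrightarrow P_n\overline{A}(|X|) $$
is a weak equivalence for every $n\ge 0$. Using the fibration sequence $D_n\overline{A} \to P_n\overline{A} \to P_{n-1}\overline{A}$ and induction, with base case $P_0\overline{A}\simeq *$, this further reduces to proving that each homogeneous layer $D_n\overline{A}$ preserves the realization of $X$.

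\emph{Homogeneous layers.} By Goodwillie's structure theorem, $D_n\overline{A}(Z) \simeq \Omega^\infty\bigl((\partial_n \overline{A}) \wedge Z^{\wedge n}\bigr)_{h\Sigma_n}$ for an appropriate spectrum $\partial_n\overline{A}$ with $\Sigma_n$-action. Each building block commutes with realization: the $n$-fold smash $Z^{\wedge n}$ by the diagonal of a multisimplicial space; smashing with a fixed spectrum because it is a left adjoint; $\Sigma_n$-homotopy orbits because they are themselves a homotopy colimit; and $\Omega^\infty$ because, thanks to the $1$-connectedness of each $X_k$, the simplicial space $[k]\mapsto X_k^{\wedge n}$ is $(2n{-}1)$-connected in each simplicial degree, ensuring the spectrum-level realization stays sufficiently connective for $\Omega^\infty$ to commute with it.

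\emph{Main obstacle.} The most delicate point is verifying that the cubical construction of $P_n$ interacts correctly with realization. The functor $P_n$ is built out of holims over punctured $(n{+}1)$-cubes of fibrewise joins $Z * (-)$, and exchanging these homotopy limits with the homotopy colimit given by realization is in general illegal. The way through is to control the connectivity of the exchange map via Blakers--Massey type estimates: the stable excision property $E_n(c,\kappa)$ that underlies the $\rho$-analyticity of $\overline{A}$ will force the comparison map at level $n$ to be as highly connected as needed once the $X_k$ are $1$-connected, and the $\mathrm{holim}$ assembly in the first step then upgrades these increasing connectivities to an honest equivalence. Quantifying those connectivity estimates and checking that they survive the $\mathrm{holim}$ over $n$ is where the bulk of the technical work lies.
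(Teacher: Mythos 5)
Your approach is genuinely different from the paper's. The paper disposes of the lemma in a single line by appealing to the Biedermann--Dwyer result that for any continuous finitary homotopy functor $F$ on $U(C)$, the natural transformation $\Phi_F$ is a weak equivalence over the category of simplicial objects in $U(C)_{\rho(F)}$, together with the observation that $\overline{A}$ is such a functor with $\rho(\overline{A})=1$. Your plan --- peel off the Taylor tower using convergence, reduce to the homogeneous layers $D_n\overline{A}$ by induction, and handle those via the classification of homogeneous functors as $\Omega^\infty(\partial_n\overline{A}\wedge(\,\cdot\,)^{\wedge n})_{h\Sigma_n}$ --- is a plausible and more self-contained alternative, but it leaves unresolved exactly the points that the Biedermann--Dwyer theorem packages.

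Concretely, two steps in your outline are gaps rather than proofs. First, the induction on $n$ passes through the fibration sequence $D_n\overline{A}\to P_n\overline{A}\to P_{n-1}\overline{A}$, and geometric realization (a homotopy colimit) does not commute with homotopy fibres of simplicial spaces in general; you must either work with the spectrum-valued refinement of $\overline{A}$, where a fibre sequence is a finite holim and does commute with realization, or impose a $\pi_*$-Kan hypothesis, and neither is stated. Second, the initial reduction from $\overline{A}\simeq\operatorname{holim}_n P_n\overline{A}$ to the individual $P_n\overline{A}$ already exchanges an inverse limit with realization; this is only legitimate because the connectivity of $\overline{A}(Z)\to P_n\overline{A}(Z)$ grows linearly in $n$ once $Z$ is $1$-connected, and you flag this as ``where the bulk of the technical work lies'' rather than carrying it out. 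Relatedly, the ``main obstacle'' paragraph about the cubical construction of $P_n$ is a red herring: once the $D_n$-induction is set up correctly you never need to open $P_n$, and the worry there suggests the two reductions above have not actually been separated cleanly. So the outline is reasonable, but the two connectivity/exchange estimates that do the real work are acknowledged and not supplied.
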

\begin{proof} By [BD], if $F$ is a continuous finitary homotopy functor on $U(C)$ then the natural transformation $\Phi_F : |[k] \mapsto F(_-)| \to F(|[k] \mapsto (_-)|)$ induces a weak equivalence over the category of simplicial objects in $U(C)_{\rho(F)}$.
\end{proof}
\vskip.5in


\subsection{Elementary Expansions and Representations in $H^n_q(|JX|)$}

As in the previous sections $X$ will denote a basepointed connected simplicial set.  Our objective in the section will be to construct the maps $\tilde\rho_q :\tilde D_q(X) \to \tilde A(\Sigma X)$ of \cite{ccgh} used in the proof of Theorem A, where
\[
D_q(X) := \Omega^\infty\Sigma^\infty(\Sigma( E\mathbb Z/p
\underset{\mathbb Z/p}{\leftthreetimes} |X|^{[p]})).
\]
In addition, we provide some techniques for computing $\tilde\rho_q$ on differentials, by relating certain restrictions of $\tilde\rho_q$ to products of elementary expansions. This will be used in section 3.3 where we compute the trace of $\tilde\rho_q$.  From the construction of $\tilde\rho_q$, it will be easy to extend it to a map $\tilde\rho_q(JX) : \tilde D_q(JX) \to \overline A(\Sigma X)$.  We do this, and prove analogous results for $\tilde\rho_q(JX)$.

Let $\imath : |X| \to |JX|$ denote the standard inclusion.  Fixing an indexing of $\overset q{\vee} S^n$ and $\overset q{\vee} (S^n \wedge
|JX|_+)$ we let $(S^n)_i$ resp\@. $(S^n \wedge |JX|_+)_i$ denote the $i\text{th}$ term in the appropriate wedge for $1\le i \le q$.  Beginning with $(x_1\cdots,x_q)\in |X|^q$, $\rho_q(x_1,\cdots,x_q)$ is the map given on $(S^n)_i$ by the composition 

\begin{equation}\label{eqn:elem1}
 (S^n)_i = S^n \xrightarrow{\text{pinch}} S^n \vee S^n
\overset{id\vee f_i}{\longrightarrow}
S^n\vee (S^n\wedge|JX|)\overset{inc}{\longrightarrow} 
(S^n\wedge |JX|_+)_i \vee (S^n\wedge |JX|_+)_{i+1}.
\end{equation}

Here subscripts are taken mod $q$; thus $i+1 = 1$ if $i = q, i+1$ otherwise.  The basepointed cofibration sequence 
\[
S^0 \overset i{\rightarrow} |J(X)|_+ \twoheadrightarrow |J(X)|
\]
splits up to homotopy after a single suspension. Fixing $j_1 : \Sigma|JX| \to \Sigma (|JX|_+)$ with $\Sigma p \circ j_1 \simeq id$ and letting $j : \Sigma^n|JX| \to \Sigma^n(|JX|_+)$ be $\Sigma^{n-1}(j_1)$, $inc$ is the map induced by the inclusions

\begin{align*}
&S^n = S^n \wedge S^0\hookrightarrow S^n\wedge |JX|_+\; , \\
&S^n \wedge |JX| \overset j{\hookrightarrow} S^n \wedge |JX|_+.
\end{align*}

Then $f_i(s) = [s,\imath(x_i)]\in S^n \wedge |JX|$ for $s\in S^n$. ``pinch'' denotes the pinch map determined by the standard embedding $S^{n-1} \to S^n$, together with a fixed choice of homeomorphism from the cofibre to $S^n\vee S^n$ . Clearly $\rho_q$ is continuous and defines a map of spaces
\[
\rho_q : |X|^q \to |H^n_q(|JX|)|\cong| \overline M^n_q(|JX|_+)|.
\]
$\rho_q$ is also equivariant with respect to the action of $\Bbb Z/q$, which acts on $|X|^q$ by cyclically permuting the coordinates and
on $H^n_q(|JX|)$ via the standard embedding $\Bbb Z/q \to \Sigma_q$ and the usual conjugation action of $\Sigma_q$ on $H^q_n(|JX|)$.

\begin{proposition}\label{prop:extend} $\rho_q$ extends to a map $\overline\rho_q : E\Bbb Z/q \underset{\Bbb Z/q}{\times} |X|^q \to \Omega\overline A(\Sigma X)$, which in turn induces a map $\tilde\rho_q : \Omega^\infty\Sigma^\infty(\Sigma (E\Bbb Z/q \underset{\Bbb Z/q}{\leftthreetimes} |X|^{[q]})) \to \overline A(\Sigma X)$.
\end{proposition}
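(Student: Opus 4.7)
The strategy is to first refine $\rho_q$ to a map into $\Omega\overline A(\Sigma X)$, then extend across $E\mathbb Z/q$ by exploiting that the induced $\mathbb Z/q$-action on the target is canonically null-homotopic (being conjugation by an element of the ambient monoid), and finally apply the $\Sigma^\infty\dashv\Omega^\infty$ adjunction to obtain $\tilde\rho_q$. For the first step, I use that $A(\Sigma X)\simeq\mathbb Z\times BH_\infty(|JX|)^+$ and that the inclusion $H^n_q\hookrightarrow BH^n_q$ as $1$-simplices, combined with stabilization and the plus construction, yields a natural map
\[
H^n_q(|JX|)\xrightarrow{\;\simeq\;}\Omega BH^n_q(|JX|)\to\Omega BH_\infty(|JX|)^+\simeq\Omega\overline A(\Sigma X).
\]
Composing with $\rho_q$ gives $\rho_q:|X|^q\to\Omega\overline A(\Sigma X)$. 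Direct inspection of formula (\ref{eqn:elem1}) shows it is $\mathbb Z/q$-equivariant, with the cyclic shift on $|X|^q$ corresponding to conjugation on $H^n_q$ by the cyclic permutation $\tau=c(1)$, where $c:\mathbb Z/q\hookrightarrow\Sigma_q\hookrightarrow H^n_q$ embeds $\mathbb Z/q$ as the standard subgroup permuting wedge summands.

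Next I trivialize the conjugation action coherently. Since $c(\sigma)$ lies in $H^n_q$ as a genuine element, conjugation by $c(\sigma)$ on $\Omega BH^n_q$ is an inner automorphism, hence after the plus construction is canonically null-homotopic: the $2$-simplex $(c(\sigma),\gamma,c(\sigma)^{-1})$ of $BH^n_q$ provides an explicit $2$-cell connecting $\gamma$ to $c(\sigma)\gamma c(\sigma)^{-1}$. Packaging these $2$-cells via the translation category model of $E\mathbb Z/q$ (object set $\mathbb Z/q$, unique morphism between any two objects) produces a $\mathbb Z/q$-equivariant map
\[
E\mathbb Z/q\times\Omega\overline A(\Sigma X)\to\Omega\overline A(\Sigma X),
\]
restricting on $\mathbb Z/q\subset E\mathbb Z/q$ to the conjugation action and at the basepoint to the identity. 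Precomposing with the equivariant $\rho_q$ and descending to the free Borel quotient gives $\overline\rho_q:E\mathbb Z/q\times_{\mathbb Z/q}|X|^q\to\Omega\overline A(\Sigma X)$.

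Finally, $\overline\rho_q$ is automatically pointed: whenever some $x_i=*$, formula (\ref{eqn:elem1}) degenerates through $|J(*)|_+=S^0$, so $\rho_q(x_1,\dots,x_q)$ factors through an inclusion of fewer wedge summands and is trivialized in $\Omega\overline A(\Sigma X)$. Thus $\overline\rho_q$ descends to a pointed map from the smash Borel construction $E\mathbb Z/q\leftthreetimes_{\mathbb Z/q}|X|^{[q]}$, and since $\overline A(\Sigma X)$ is an infinite loop space, the adjunction between pointed maps $W\to\Omega\overline A(\Sigma X)$ and pointed maps $\Omega^\infty\Sigma^\infty(\Sigma W)\to\overline A(\Sigma X)$ converts $\overline\rho_q$ into the desired $\tilde\rho_q$; naturality in $X$ is immediate from the construction. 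The main obstacle lies in the middle paragraph, the coherent packaging of the pointwise $2$-simplex null-homotopies into a single $E\mathbb Z/q$-parameterized family; this succeeds because $c$ is a strict group homomorphism into the symmetry subgroup $\Sigma_q\subset H^n_q$, so that the identities $c(\sigma_1\sigma_2)=c(\sigma_1)c(\sigma_2)$ and $c(\sigma)c(\sigma^{-1})=1$ hold on the nose and the relevant bar-construction simplices compose without homotopy-theoretic slack.
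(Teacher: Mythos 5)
Your overall strategy (refine $\rho_q$ to land in $\Omega\overline A(\Sigma X)$, trivialize the cyclic action, apply the adjunction) matches the paper's, and your key observation in the middle step is the right one: $\mathbb Z/q$ embeds as a genuine subgroup of $H^n_q(|JX|)$ permuting wedge summands, so the action is strictly inner. The paper, however, does not argue this by hand via bar-construction cells but cites a result from \cite{fo} for the fact that the plus construction can be made $\Sigma_\infty$-equivariant and the resulting action on $\Omega A(\Sigma X)$ is trivial up to homotopy, yielding the retraction $p$. Your sketch of that step is imprecise as written (the tuple $(c(\sigma),\gamma,c(\sigma)^{-1})$ is a $3$-simplex, not a $2$-simplex, and "compose without slack" needs to be cashed out; a clean route is the monoid isomorphism $H^n_q\rtimes\Sigma_q\cong H^n_q\times\Sigma_q$, $(h,\sigma)\mapsto(h\sigma,\sigma)$), but it is repairable and only a presentational difference from the paper.

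The genuine gap is your final paragraph. It is false that $\rho_q(x_1,\dots,x_q)$ lands at (or even near) the basepoint of $H^n_q(|JX|)$ when some $x_i=*$. Setting $x_i=*$ makes the $i$-th component of (\ref{eqn:elem1}) only \emph{homotopic} to the standard inclusion $(S^n)_i\hookrightarrow(S^n\wedge|JX|_+)_i$ — the needed homotopy is exactly (\ref{eqn:1.3.16}) — and, more to the point, the remaining components $j\ne i$ still involve the nontrivial maps $f_j(s)=[s,\imath(x_j)]$, so $\rho_q$ restricted to the fat wedge is not null, let alone constant. Hence $\overline\rho_q$ does not descend, even up to homotopy, to the smash Borel construction $E\mathbb Z/q\underset{\mathbb Z/q}{\leftthreetimes}|X|^{[q]}$, and the adjunction step cannot be applied the way you want. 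The paper never attempts this descent: it takes the infinite-loop extension of the adjoint of $\overline\rho_q$, producing a map out of $\Omega^\infty\Sigma^\infty(\Sigma(E\mathbb Z/q\underset{\mathbb Z/q}{\times}|X|^q))$, and then precomposes with a stable section $s$ of the projection $E\mathbb Z/q\underset{\mathbb Z/q}{\times}|X|^q\to E\mathbb Z/q\underset{\mathbb Z/q}{\leftthreetimes}|X|^{[q]}$; such a section exists because after a single suspension the Borel construction of $|X|^q$ splits stably with $(E\mathbb Z/q)_+\underset{\mathbb Z/q}{\wedge}|X|^{[q]}$ as a wedge summand. Without this device your construction only yields a map defined on the unreduced Borel construction, not the required $\tilde\rho_q$.
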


\begin{proof} Taking the direct limit under suspension and stabilization yields a map $|X|^q \to |H(|JX|)|$ which we also denote by $\rho_q$.  This map is still $\Bbb Z/q$-equivariant, where $\Bbb Z/q$ acts on the second space via the embedding $\Bbb Z/q \to \Sigma_q \to \Sigma_\infty$.  It suffices to know now that the plus construction $|H(|JX|)| \to \Omega A(\Sigma X)$ can be done so as to be equivariant with respect to the action of $\Sigma_\infty$ and that the action of $\Sigma_\infty$ on $\Omega A(\Sigma X)$ is trivial up to homotopy.  This follows from \cite{fo}. The result is that $\Omega A(\Sigma X) \overset i{\rightarrow} \underset{\Sigma_\infty\phantom{xx}}{E\Sigma_\infty \times\Omega A(\Sigma X)}$ admits a left homotopy inverse 
\[
p : \underset{\Sigma_\infty\phantom{xx}}{E\Sigma_\infty \times\Omega A(\Sigma X)} \to \Omega A(\Sigma X)
\]
$(p\circ i\simeq id)$ and we can take $\overline\rho_q$ to be the composition 
\[ 
E\Bbb Z/q \underset{\Bbb Z/q}{\times} |X|^q
\overset{(1\times\rho_q)}{\longrightarrow} \underset{\Sigma_\infty\phantom{xxx}}{E\Sigma_\infty{\times} |H(|JX|)|} \to \underset{\Sigma_\infty\phantom{xx}}{E\Sigma_\infty \times\Omega A(\Sigma X)} \overset p{\rightarrow} \Omega A(\Sigma X).
\]
Taking the infinite-loop extension of the adjoint of $\overline\rho_q$ yields a map
\[
\Omega^\infty\Sigma^\infty(\Sigma(E\Bbb Z/q\underset{\Bbb Z/q}{\times} |X|^q)) \to A(\Sigma X).
\]
Fix a stable section $s$ of the projection
\[
E\Bbb Z/q\underset{\Bbb Z/q}{\times} |X|^q \to E\Bbb Z/q
\underset{\Bbb Z/q}{\leftthreetimes}|X|^{[q]} = (E\Bbb Z/q)_+\underset{\Bbb Z/q}{\wedge}|X|^{[q]}.
\]
Then $\tilde\rho_q$ is the composition
\[
\Omega^\infty\Sigma^\infty (\Sigma(E\Bbb Z/q\underset{\Bbb Z/q}{\leftthreetimes}|X|^{[q]})) \overset s{\rightarrow}
\Omega^\infty\Sigma^\infty (\Sigma(E\Bbb Z/q\underset{\Bbb Z/q}{\times}|X|^q)) \to
A(\Sigma X).
\]
Finally we note that all of the constructions are natural in $X$, and hence factor through $\overline A(\Sigma X)$.
\end{proof}

The space $\Omega^\infty\Sigma^\infty(\Sigma(E\Bbb Z/q\underset{\Bbb Z/q}{\leftthreetimes}|X|^{[q]}))$ will be denoted by $\tilde D_q(X)$.  $\tilde D_q(_-)$ can alternatively be thought of as a functor on connected spaces.  The following is more or less contained in [\S3, CCGH]. Here $X \text{ and } Y$ denote basepointed simplicial sets.

\begin{proposition}\label{prop:rep1}
\begin{enumerate}
\item   $(D_1\tilde D_q)_X(Y) \simeq \Omega^\infty\Sigma^\infty(\Sigma|X^{[q-1]} \wedge Y|)$.
\item $(D_1 F_q)_X(Y) =\Omega^\infty\Sigma^\infty(\Sigma(\overset q{\underset{i=1}{\vee}} |X^{[i-1]} \wedge Y \wedge X^{[q-i]}|))$, where $F_q(Z)=\Omega^\infty\Sigma^\infty(\Sigma|Z^{[q]}|)$.  The natural transformation $F_q(_-) \to\tilde D_q(_-)$ induces the fold map on $1^{\text{st}}$ differentials which is
the infinite loop extension of the map 
\begin{gather*}
\overset{q-1}{\underset{i=0}{\vee}} X^{[q-i-1]} \wedge Y \wedge X^{[i]} \to X^{[q-1]} \wedge Y,\\ 
(x_1, \cdots, x_{q-i-1}, y,x'_1,\cdots,x'_i)\mapsto (x'_1,\cdots,x'_i,x_1,\cdots, x_{q-i-1}, y).
\end{gather*}
\item The inclusion $i_q(X,Y) : X^{[q-1]} \wedge Y \to (X\vee Y)^{[q]} \to E\Bbb Z/q\underset{\Bbb Z/q}{\leftthreetimes} (X\vee Y)^{[q]}$ induces an equivalence 
\[
\Omega^\infty\Sigma^\infty(\Sigma|X^{[q-1]} \wedge Y|) \to \underset{\underset n{\longrightarrow}}{\lim}\; \Omega^m hofibre(\tilde D_q(X\vee\Sigma^m Y)\to \tilde D_q(X)) = (D_1\tilde D_q)_X(Y) 
\]
\end{enumerate}
\end{proposition}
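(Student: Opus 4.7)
The plan is to exploit the fact that $\Omega^\infty\Sigma^\infty\Sigma(-)$ is a linear (i.e.\ $1$-excisive, reduced) functor of pointed spaces, so for any composite $F = \Omega^\infty\Sigma^\infty\Sigma\circ G$ the first Goodwillie differential reduces to $\Omega^\infty\Sigma^\infty\Sigma$ applied to $D_1 G$. Both $F_q$ and $\tilde D_q$ have this form, with $G$ equal to the smash power $(-)^{[q]}$ and its cyclic Borel construction $(E\mathbb Z/q)_+\wedge_{\mathbb Z/q}(-)^{[q]}$ respectively. I will therefore decompose $(X\vee Y)^{[q]}$ into its $2^q$ wedge summands indexed by words in $\{X,Y\}^q$, isolate the linear-in-$Y$ part (words with a unique $Y$), and then propagate this through the relevant functors.

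For part (2), the linear-in-$Y$ wedge summand of $(X\vee Y)^{[q]}$ is exactly $\bigvee_{i=1}^{q} X^{[i-1]}\wedge Y\wedge X^{[q-i]}$, and applying $\Omega^\infty\Sigma^\infty\Sigma$ yields the asserted formula for $(D_1 F_q)_X(Y)$. For part (1), the cyclic group $\mathbb Z/q$ permutes these $q$ summands freely (the position of the unique $Y$ is a complete orbit invariant), and for any freely permuted pointed wedge one has an equivalence $(E\mathbb Z/q)_+\wedge_{\mathbb Z/q}\bigvee_{\mathbb Z/q} W \simeq W$; taking $W = X^{[q-1]}\wedge Y$ gives $(D_1\tilde D_q)_X(Y)\simeq\Omega^\infty\Sigma^\infty(\Sigma|X^{[q-1]}\wedge Y|)$. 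The fold-map identification in the second half of (2) now comes from tracking the natural projection $(X\vee Y)^{[q]}\twoheadrightarrow (E\mathbb Z/q)_+\wedge_{\mathbb Z/q}(X\vee Y)^{[q]}$ on each linear summand: the summand with $Y$ in the $i$-th slot hits the orbit class of a cyclic rotation, which the above equivalence identifies with the representative having $Y$ in the last coordinate, precisely producing the shift $(x_1,\ldots,x_{q-i-1},y,x'_1,\ldots,x'_i)\mapsto(x'_1,\ldots,x'_i,x_1,\ldots,x_{q-i-1},y)$.

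For part (3), observe that the composite
$$
X^{[q-1]}\wedge Y \overset{i_q(X,Y)}{\hookrightarrow}(X\vee Y)^{[q]} \twoheadrightarrow (E\mathbb Z/q)_+\wedge_{\mathbb Z/q}(X\vee Y)^{[q]}
$$
inserts $Y$ in the final coordinate and then passes to the orbit class, which under the equivalence of part (1) is (up to homotopy) a section of the collapse onto the linear-in-$Y$ part of the Borel construction. Applying $\Omega^\infty\Sigma^\infty\Sigma$ and composing with the natural map to $\tilde D_q(X\vee Y)$ lands in the homotopy fibre of $\tilde D_q(X\vee Y)\to\tilde D_q(X)$; after looping $n$ times and passing to the colimit over $Y\mapsto \Sigma^n Y$ the resulting map is exactly the $1^{\text{st}}$ differential equivalence identified in (1).

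The main technical obstacle is the equivariant bookkeeping in the previous paragraph: one must verify that the $\mathbb Z/q$-action on the linear-in-$Y$ piece really is free (so that the homotopy quotient collapses to the strict quotient, giving $(E\mathbb Z/q)_+\wedge_{\mathbb Z/q}\bigvee_{\mathbb Z/q}W\simeq W$), and that the orbit-representative choice underlying this equivalence is compatible with the cyclic shift prescribed by the fold formula. Granted this piece of bookkeeping, the three assertions follow from the wedge decomposition of smash powers and the linearity of $\Omega^\infty\Sigma^\infty\Sigma$, together with routine naturality in $X$ and $Y$.
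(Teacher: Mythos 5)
Your proposal is correct and follows essentially the same approach as the paper, which defers to [CCGH] for (1) and (2) and sketches exactly your argument: compute $(D_1 F_q)_X(Y)$ from the wedge decomposition of $(X\vee Y)^{[q]}$, observe that the Borel construction collapses the free $\mathbb Z/q$-orbit of linear-in-$Y$ summands to a single copy via the fold map, and then deduce (3) by identifying the inclusion $X^{[q-1]}\wedge Y\hookrightarrow (X\vee Y)^{[q]}$ with the last wedge summand. Your explicit free-orbit identity $(E\mathbb Z/q)_+\wedge_{\mathbb Z/q}\bigvee_{\mathbb Z/q}W\simeq W$ is a clean way to make precise the paper's phrase that the Borel construction has the effect of ``dividing by $q$.''
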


\begin{proof}  1) and 2) appear in \cite{ccgh}; the easiest way to verify them is to first compute $(D_1 F_q)_X(Y)$, which is straightforward, and then observe that inclusion the term $(E\Bbb Z/q\underset{\Bbb Z/q}{\leftthreetimes} (_-))$ has the effect of ``dividing by $q$''\ (in Goodwillie's words) via the fold map.  Finally 3) follows from 1) and 2), as the inclusion $X^{[q-1]} \wedge Y\to (X\vee Y)^{[q]}$ induces a map $\Omega^\infty\Sigma^\infty(\Sigma(|X^{[q-1]} \wedge Y|)) \to (D_1F_q)_X(Y)$ which agrees up to homotopy with the infinite loop extension of the inclusion of $X^{[q-1]} \wedge Y$ into the last term in the wedge $\overset q{\underset{i=1}{\vee}} X^{[i-1]} \wedge Y\wedge X^{[q-i]}$.
\end{proof}

Recall that for a ring $R$ and $r\in R$ the elementary matrix $e_{ij}(r)$ is the matrix $id + \overline e_{ij}(r)$ where $\overline
e_{ij}(r)_{k,\ell} = r$ if $(k,\ell) = (i,j), 0$ otherwise.  One should not try to push the analogy between $H^n_q(|JX|)$ and the group $GL_q(\Bbb Z[JX])$ too far, especially for finite $n$.  However one can construct elements of $H^n_q(|JX|)$ which behave enough like
elementary matrices to be useful.  We call these \underbar{elementary expansions}, as they correspond to the elementary expansions of classical Whitehead simple homotopy theory.

\begin{definition}\label{def:expand} Let $X$ be a connected  simplicial set, and $\imath : |X| \to |JX|$ the standard inclusion.  For $x\in |X|, e_{ij}(\imath(x)) \in |H^n_q(|JX|)|$ is given on $(S^n)_\ell \subset \overset q{\underset{k=1}{\vee}} (S^n)_k$ by 
\begin{align*}
&\underline{\ell\ne i}\qquad (S^n)_\ell \overset{ inc}{\longrightarrow} (S^n \wedge |JX|_+)_\ell\\ 
&\underline{\ell = i} \qquad (S^n)_i = S^n \overset{\text{pinch}}{\longrightarrow} S^n \vee S^n \overset{id\; \vee f}{\longrightarrow} S^n \vee (S^n\wedge |JX|)\overset{inc}{\longrightarrow} (S^n\wedge |JX|_+)_i \vee(S^n\wedge |JX|_+)_j
\end{align*}
\end{definition}

\noindent where (as before) we have identified $H^n_q(|JX|)$ with $\overline M^n_q(|JX|_+)$.  The sequence for $\ell = i$ is exactly as in (\ref{eqn:elem1}) with $f(s) = [s,\imath(x)] \in S^n \wedge |JX|$; the only difference is the indexing of the last term.  $e_{ij} (-\imath(x))$ is defined the same way, but with $id\vee f$ replaced by $id\vee (-f)$ where $-f$ is $f$ composed with a fixed choice of $S^n\overset{(-1)}{\longrightarrow} S^n$ representing loop inverse. The \underbar{reduced elementary expansion} $\overline e_{ij}(\imath(x))$ is given by
\begin{align*}
&\underline{\ell \ne i}\qquad\qquad\qquad (S^n)_\ell \longrightarrow * \\
&\underline{\ell = i}\qquad(S^n)_i \overset f{\rightarrow} (S^n \wedge |JX|) \overset {inc}{\longrightarrow} (S^n \wedge |JX|_+)_j.
\end{align*} 

Similarly one can define $\overline e_{ij}(-\imath(x))$.

\begin{remark}\label{rem:pm} When $i= j$, one could define $e_{ii}(\pm\imath(x))$\;\; (loop inverse) in a similar fashion.  Note that the definition of $e_{ij} (\pm(x))$ depends on a choice of parameters: choice of pinch map, choice of $j : S^n \wedge |JX| \to S^n \wedge |JX|_+$, and choice
of $S^n\overset{(-1)}{\longrightarrow} S^n$ representing $-1$. These, however, can be fixed so as to be compatible under suspension in the $n$ coordinate and depending in a continuous and natural way on $x\in |X|$ as well as $X$.  We assume this has been done.  In this way, all of the manipulations we will do with these elements will be natural in $X$ and $x\in |X|$.
\end{remark}

In a similar vein we will sometimes want to know that two maps depending on $x\in |X|$ (or diagrams depending on $X,Y,\dots$) are homotopic by a natural homotopy which depends continuously on $x\in |X|$ (resp\@. naturally homotopy-commutative by a homotopy which depends continuously on the spaces $X,Y,\dots$).   When this can be done, we will say the two maps are \underbar{canonically} homotopic
(or that the diagram is \underbar{canonically} $h$-commutative).

\begin{proposition}\label{prop:inv} Suppose $f = e_{i_1 j_1}(\imath(x_1))\cdot\dots\cdot e_{i_n j_n} (\imath(x_n))$ for $x_i \in |X|$. Then there is a canonical homotopy $f\cdot f^{-1} \simeq *$, where $f^{-1} = e_{i_n j_n} (-\imath(x_n))\cdot\dots\cdot e_{i_1 j_1} (-\imath(x_1))$.
\end{proposition}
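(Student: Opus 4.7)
The plan is to argue by induction on $n$. The base case $n = 1$ is the statement that $e_{ij}(\imath(x)) \cdot e_{ij}(-\imath(x)) \simeq *$ canonically, i.e., canonically homotopic to the identity element of $H^n_q(|JX|)$. For this I would compute the composition directly from Definition \ref{def:expand}. Using the convention $A\cdot B = B\circ A$ and identifying $M^n_q(|JX|_+)$ with the space of $|JX|$-equivariant maps $\bigvee_q (S^n\wedge|JX|_+) \to \bigvee_q (S^n\wedge|JX|_+)$, the product acts as the identity inclusion on each summand $(S^n)_\ell$ with $\ell\neq i$. On $(S^n)_i$, the first map produces $(s_1,1)_i \vee (s_2,\imath(x))_j$ after one pinch; the equivariant extension of $e_{ij}(-\imath(x))$ then pinches $(s_1,1)_i$ further, contributing a term $(\sigma(t),\imath(x))_j$, and leaves $(s_2,\imath(x))_j$ fixed since $j\neq i$. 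The two components landing in $(S^n\wedge|JX|_+)_j$ thus factor through $S^n \xrightarrow{\mathrm{pinch}} S^n \vee S^n \xrightarrow{(-\mathrm{id})\vee\mathrm{id}} S^n$, which is canonically nullhomotopic via the cogroup structure on $S^n$, while the $(S^n\wedge|JX|_+)_i$-component collapses to the identity inclusion by the standard pinch-fold homotopy.

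For the inductive step, I would write
\[
f \cdot f^{-1} \;=\; e_{i_1 j_1}(\imath(x_1)) \,\cdot\, g \,\cdot\, e_{i_1 j_1}(-\imath(x_1)),
\]
where $g := e_{i_2 j_2}(\imath(x_2))\cdots e_{i_n j_n}(\imath(x_n))\cdot e_{i_n j_n}(-\imath(x_n))\cdots e_{i_2 j_2}(-\imath(x_2))$. By the inductive hypothesis there is a canonical nullhomotopy $g \simeq *$, and composing this homotopy with $e_{i_1 j_1}(\pm\imath(x_1))$ on either side produces a canonical homotopy from $f\cdot f^{-1}$ to $e_{i_1 j_1}(\imath(x_1))\cdot e_{i_1 j_1}(-\imath(x_1))$, which is then handled by the base case. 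Canonicity of all the intermediate homotopies is automatic, since per Remark \ref{rem:pm} the pinch map, the splitting $j : \Sigma^n|JX| \to \Sigma^n(|JX|_+)$, and the fixed degree $-1$ self-map of $S^n$ have been chosen once and for all in a manner that depends continuously on $x \in |X|$ and naturally on $X$; hence each homotopy produced by the induction inherits the same continuous/natural dependence.

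The main obstacle is the explicit base-case calculation: one must carefully track how the $|JX|_+$-action mixes the two wedge summands to confirm that the $j$-th contributions really do assemble into a single map through $(-\mathrm{id})\vee\mathrm{id}\colon S^n\vee S^n \to S^n$, rather than producing cross-terms that survive past the pinch. Once this is in hand, the remainder of the argument is a straightforward telescoping cancellation together with the naturality bookkeeping guaranteed by Remark \ref{rem:pm}.
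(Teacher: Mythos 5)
Your argument is correct, and it reorganizes the proof in a way that is genuinely different from the paper's. The paper's sketch treats the whole product $f\cdot f^{-1}$ at once as a single long iterated pinch on $S^n$: it reassociates the iterated pinches (this is why the proof invokes a chosen homotopy between $(\mathrm{pinch}\vee\mathrm{id})\circ\mathrm{pinch}$ and $(\mathrm{id}\vee\mathrm{pinch})\circ\mathrm{pinch}$) until inverse terms are adjacent, then kills each adjacent pair with the coinverse homotopy $S^n\xrightarrow{\mathrm{pinch}}S^n\vee S^n\xrightarrow{\mathrm{id}\vee(-1)}S^n\vee S^n\xrightarrow{\mathrm{fold}}S^n\simeq *$. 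You instead induct on $n$: you conjugate the canonical nullhomotopy of the inner block $g$ by $e_{i_1j_1}(\pm\imath(x_1))$ to reduce to a single adjacent pair and then handle $n=1$ directly. Because the composition product in $H^n_q(|JX|)$ is strictly associative, your inductive step never needs the coassociativity homotopy at all, which is a small simplification over the paper's route. Two minor points worth keeping in mind. First, your base case needs not only the coinverse homotopy on the $j$-th component but also the counit homotopy $(\mathrm{id}\vee *)\circ\mathrm{pinch}\simeq\mathrm{id}$ to reduce the $i$-th component to the standard inclusion; this is the ``pinch-fold homotopy'' you gesture at, and it is as canonically chosen as the rest, so nothing is lost, but it should be named. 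Second, in the inductive step the homotopy $g\simeq *$ lives in $M^n_q(|JX|_+)$ while conjugation requires its $|JX|$-equivariant extension; since passage from $Map(\vee S^n,\vee S^n\wedge|JX|_+)$ to $Map_{|JX|}(\vee S^n\wedge|JX|_+,\vee S^n\wedge|JX|_+)$ is a homeomorphism, this is automatic, but it is worth a sentence. Both approaches rest on the same foundation: the cogroup homotopies on $S^n$ are chosen once and for all, independent of the $x_i$ and of $X$, and this is exactly what makes the resulting nullhomotopy canonical in the sense of Remark~\ref{rem:pm}.
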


\begin{proof} There is certainly a homotopy.  It can be made canonical by concentrating the homotopy in the spherical coordinates. This involves choosing a homotopy between 
\begin{align*}
&S^n \overset{\text{pinch}}{\longrightarrow} S^n \vee S^n
\xrightarrow{\text{pinch }\vee\text{ id}} S^n \vee S^n \vee S^n \\
\qquad\qquad\text{and}\\
&S^n \xrightarrow{\text{pinch}} S^n \vee S^n 
\xrightarrow{\text{id }\vee\text{ pinch}} S^n \vee S^n \vee S^n
\end{align*}
as well as a homotopy between $S^n \overset{\text{pinch}}{\longrightarrow} S^n \vee S^n  \overset{\text{id }\vee\text{ (-1)}} {\longrightarrow} S^n \vee S^n \overset{\text{fold}}{\longrightarrow} S^n$  and the trivial map $S^n \longrightarrow *$
\end{proof}

We are \underbar{not} making any claims that this homotopy is unique, even up to homotopy.  We will also need

\begin{proposition}\label{prop:canon1} For $x_1,\dots,x_{q-1} \in |X|,  y\in |Y|$, there is a canonical homotopy between 
\begin{gather*}
e_{12}(-\imath(x_1))\cdot e_{23} (-\imath(x_2))\cdot\dots\cdot
e_{q-1q}(-\imath(x_{q-1}))\overline e_{q1}(\imath(y)) \text{ and }\\
\overline e_{11}((\overset{q-1}{\underset{i=1}{\prod}}
-\imath(x_i))\imath(y)) + \overline e_{21}((\overset{q-1}{\underset{i=2}{\prod}}
-\imath(x_i))\imath(y)) + \dots + \overline e_{q1}(\imath(y))
\end{gather*}
where ``+''\ denotes loop sum.
\end{proposition}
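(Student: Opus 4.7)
The plan is to compute both sides as pointed maps on each wedge summand $(S^n)_\ell$ of the common domain $\bigvee^q S^n$, and to exhibit an explicit natural homotopy between them on each summand; these then assemble into the required canonical homotopy.

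For the LHS, I would unravel the composition $e_{12}(-\imath(x_1)) \cdot \dots \cdot e_{q-1,q}(-\imath(x_{q-1}))$ one factor at a time. Each $e_{k,k+1}(-\imath(x_k))$ is the $|J(X \vee Y)|$-equivariant extension of a (pinch, $id \vee (-f)$, inclusion)-composite and acts non-trivially only on the $k$-th wedge summand of the ambient $\bigvee^q(S^n \wedge |J(X \vee Y)|_+)$. Starting at $s \in (S^n)_\ell$, the intermediate sub-product $e_{\ell, \ell+1}(-\imath(x_\ell)) \cdots e_{q-1, q}(-\imath(x_{q-1}))$ produces a single map from $S^n$ into $\bigvee_{k=\ell}^{q}(S^n \wedge |J(X \vee Y)|_+)_k$, built from a $(q-\ell)$-fold iterated pinch, with the piece in the $k$-th summand carrying sign $(-1)^{k-\ell}$ and an accumulated word in $\imath(x_\ell), \ldots, \imath(x_{k-1})$ (whose order is fixed by the equivariance convention inherited from \cite{w2}). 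The terminal $\overline e_{q1}(\imath(y))$ kills every summand but the $q$-th and carries the $q$-th piece into $(S^n \wedge |J(X \vee Y)|_+)_1$, further multiplying in $\imath(y)$. The resulting composite, an iterated pinch with all but one branch collapsed to the basepoint, is canonically homotopic to $(-1)^{q-\ell}\cdot \mathrm{id}_{S^n}$ by repeated application of the canonical homotopy that a pinch $S^n \to S^n \vee S^n$ followed by collapse of one factor equals the identity --- the same mechanism leveraged in Proposition \ref{prop:inv}. Thus the LHS on $(S^n)_\ell$ is canonically homotopic to the single explicit map $s \mapsto ((-1)^{q-\ell}s, w_\ell)_1$, for a specific word $w_\ell$.

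For the RHS, each summand $\overline e_{\ell 1}(\cdot)$ of the $q$-term loop sum is supported only on $(S^n)_\ell$ --- it sends $(S^n)_k$ to the basepoint for $k \neq \ell$ --- so the loop sum restricted to $(S^n)_\ell$ is canonically homotopic to its $\ell$-th term alone, via analogous basepoint-cancellation homotopies (the pinch-collapse homotopy applied in the degenerate case where one sub-map equals the basepoint). Unpacking the product gives $(\prod_{i=\ell}^{q-1}(-\imath(x_i)))\imath(y) = (-1)^{q-\ell}\imath(x_\ell)\imath(x_{\ell+1}) \cdots \imath(x_{q-1})\imath(y)$, which matches the word $w_\ell$ obtained from the LHS, so both sides agree pointwise on every $(S^n)_\ell$ and hence, via the assembled canonical homotopies, as maps out of $\bigvee^q S^n$.

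The main obstacle is organizational: one must carefully track signs and the ordering of the accumulated $\imath(x_i)$'s and $\imath(y)$ in $|J(X\vee Y)|$ through each equivariant extension, and confirm that it matches the ordering implicit in $(\prod_{i=\ell}^{q-1}(-\imath(x_i)))\imath(y)$. Once that convention bookkeeping is done, the homotopies used (pinch-collapse and basepoint cancellation) are all canonical in the sense of Remark \ref{rem:pm}, built from the pre-fixed pinch map and $(-1)\colon S^n \to S^n$, continuous in $x_1, \ldots, x_{q-1}, y$, and natural in $X$ and $Y$; they combine into the desired canonical homotopy between LHS and RHS.
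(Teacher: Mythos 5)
Your proposal is correct and follows essentially the same line of argument as the paper: verify the identity at the level of matrices (or, in your formulation, summand-by-summand on $\bigvee^q S^n$), then observe that the homotopies needed are canonical because they consist only of pinch-map reparametrizations and repositioning the $(-1)$ factors in the spherical coordinate. The paper's proof is terser (``On the level of matrices this is clear'') but invokes exactly the same two mechanisms you identify for canonicity.
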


\begin{proof} On the level of matrices this is clear; the product here is taking place in $|J(X\vee Y)|$.  Properly speaking, we should write $\overset{q-1}{\underset{i=j}{\prod}} -\imath(x_i)$ as $(-1)^{q-1-j} \overset{q-1}{\underset{i=j}{\prod}} \imath(x_i)$, given that $|J(X\vee Y)|$ is a monoid without any strict inverses.  To realize that the obvious homotopy is canonical, we note that it involves\newline  i) reparamerization to pass between the sequence of pinch maps used to evaluate the compositions and \newline ii)\; reparametrization to reposition the iterated power of $(-1)$ appearing in the expression $(-1)^{q-1-j} \overset{q-1}{\underset{i=j}{\prod}} \imath(x_i)$.  Both of these can be done in a natural and continuous way with respect to the parameters  $x_1,\dots,x_{q-1},y$ involved.
\end{proof}

The next result relates the representations $\rho_q$ of (\ref{eqn:elem1}) to products of elementary expansions.  This will be needed for the computation of the trace on $\tilde\rho_q$ given in \S3.3.  We define representations $\overline\rho_q^1, \overline\rho^2_q$ as follows:
\begin{align}\label{eqn:row12}
&\overline\rho^1_q(x_1,\dots,x_{q-1}) = \rho_q(x_1,\dots,x_{q-1},*)\quad x_i\in X\\
&\overline\rho^2_q(y) = p_2\rho_q(*,*,\dots,*,y) \quad  y\in Y
\end{align}

where $p_2 : H^n_q(|J(X\vee Y)|) \to M^n_q(|\overline F_1(X,Y)|)$ is as in Lemma \ref{lemma:p1p2}, with $X$ and $Y$ connected.

\begin{proposition}\label{prop:canon2} As continuous maps $\overline \rho^1_q$ and $\overline\rho^2_q$ are canonically homotopic to the following products of elementary expansions: 
\begin{gather*}
\overline\rho^1_q(x_1,\dots,x_{q-1}) \simeq
e_{(q-1)q}(\imath(x_{q-1})) e_{(q-2)(q-1)}(\imath(x_{q-2}))\cdot\dots\cdot e_{12}(\imath(x_1));\\
\overline\rho^2_q(y) \simeq \overline e_{q1}(y).
\end{gather*}
\end{proposition}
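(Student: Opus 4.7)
The plan is to verify both equivalences by coordinate-by-coordinate inspection of the underlying representatives in $|M_q^n(|J(X\vee Y)|_+)|$, using formula (\ref{eqn:elem1}) for $\rho_q$ and Definition \ref{def:expand} for the elementary expansions. All ``canonical'' homotopies needed will be built from the two fundamental natural homotopies already fixed by Remark \ref{rem:pm}: pinch followed by projection onto one summand is canonically homotopic to the identity, and pinch followed by a trivially-labelled second factor is canonically homotopic to the inclusion.

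For the second identity, unpack $\rho_q(*,\dots,*,y)$ using (\ref{eqn:elem1}): on $(S^n)_i$ with $i<q$ the second summand of the pinch carries the label $\imath(*)=*$, so $f_i$ is constant and the composition is canonically homotopic to the inclusion $S^n\hookrightarrow (S^n\wedge|F_1(X,Y)|_+)_i$; on $(S^n)_q$ the second summand is $[-,\imath(y)]$ mapping into $(S^n\wedge|F_1(X,Y)|_+)_1$. Applying $p_2$ collapses $|F_0(X,Y)|=|JX|$ together with the disjoint basepoint $+$ to the basepoint of $|\overline F_1(X,Y)|$, so every ``inclusion'' summand becomes basepoint-valued while the surviving summand with label $\imath(y)$ descends to $[-,(+,y,+)]$ on the $q$-th coordinate going to position $1$. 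This is precisely $\overline e_{q1}(y)$.

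For the first identity, recall the convention $AB=B\circ A$, so the displayed product applies $e_{(q-1)q}(\imath(x_{q-1}))$ first and $e_{12}(\imath(x_1))$ last. I propose to track the image of each coordinate $(S^n)_\ell$ through these successive equivariant self-maps using the following disjoint-supports observation: viewed as a $|JX|$-equivariant self-map of $\vee^q(S^n\wedge|JX|_+)$, the expansion $e_{j(j+1)}(\imath(x_j))$ acts by pinch-plus-$[-,\imath(x_j)]$ only on the $j$-th summand and is the identity on every other summand. For $\ell<q$, the factors $e_{j(j+1)}$ with $j>\ell$ are applied first and leave the $\ell$-th summand untouched; then $e_{\ell(\ell+1)}(\imath(x_\ell))$ pinches $(S^n)_\ell$ into $(S^n\wedge|JX|_+)_\ell\vee(S^n\wedge|JX|_+)_{\ell+1}$ with labels $1$ and $\imath(x_\ell)$; the remaining factors with $j<\ell$ fix this image because they act as the identity on summands $\ell$ and $\ell+1$. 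This matches formula (\ref{eqn:elem1}) for $\overline\rho^1_q$ on $(S^n)_\ell$ exactly. For $\ell=q$, no factor touches this coordinate and the composition is the inclusion into $(S^n\wedge|JX|_+)_q$; this is canonically homotopic to $\overline\rho^1_q(x_1,\dots,x_{q-1})|_{(S^n)_q}$ via the pinch-with-trivial-second-factor homotopy, since $f_q(s)=[s,\imath(*)]=*$ in this case.

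The main obstacle is bookkeeping rather than geometry: one must check that the equivariant extension of each $e_{j(j+1)}$ from $\vee^q S^n$ to $\vee^q(S^n\wedge|JX|_+)$ really restricts to the identity on the previously pinched image of a prior factor, and that the continuous and natural choices of pinch map, loop inverse, and splitting $j:\Sigma^n|JX|\hookrightarrow\Sigma^n(|JX|_+)$ stipulated in Remark \ref{rem:pm} assemble into a homotopy canonical in $x_1,\dots,x_{q-1}$, $y$, and in $X,Y$. No new geometric input is required beyond the canonical homotopies already used in Propositions \ref{prop:inv} and \ref{prop:canon1}.
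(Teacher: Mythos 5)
Your proposal is correct and follows essentially the same route as the paper: a coordinate-by-coordinate comparison using formula (\ref{eqn:elem1}) and Definition \ref{def:expand}, with the only genuine homotopy being the pinch-with-trivial-second-factor reparametrization in the spherical coordinate (as in (\ref{eqn:1.3.16})) for $\overline\rho^1_q$, while for $\overline\rho^2_q$ the projection $p_2$ kills the diagonal inclusion entries outright. The intermediate ``homotope first, then project'' step you insert for the $i<q$ coordinates of $\overline\rho^2_q$ is harmless but unnecessary, since applying $p_2$ directly already sends both legs of the pinch to the basepoint on the nose.
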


\begin{proof} This again only involves a reparametrization in the spherical coordinate independent of $X$ and $Y$, in the case of $\overline\rho^1_q$.  In the case of $\overline\rho^2_q$ we needn't do anything, as the projection map $p_2$ kills the identity maps along
the diagonal and we are left with a single non-zero entry. 
\end{proof}

\begin{remark} The above canonical homotopies arise from Steinberg identities, which hold in $H^n_k(|GX|)$ up to canonical homotopies.  Most types of identities among elementary expansions which hold up to homotopy do not hold up to canonical homotopy.  For example, it is not true that the entire representation $\rho_q$ is \underbar{canonically} homotopic to a product of elementary expansions.  This type of issue typically arises whenever one tries to analyze such cyclic representations in terms of products of elementary expansions.
\end{remark}

We have stated the above results using elementary expansions with entries in $\imath(|X|) \subset J|X|$, which is all we will need for section 3.  However the above constructions apply to the more general case where one allows arbitrary entries in $J|X|$ (or even $|GX|$ when $|X|$ is not a suspension).  Thus for $y\in J|X| \cong |JX|$, one defines $e_{ij}(y) \in |H^q_n(|JX|)|$ exactly as in Definition \ref{def:expand}, where $f : S^n \to S^n \wedge |JX|$ is the map $f(s) = [s,y] \in S^n \wedge |JX|$.  Similarly for the reduced elementary expansion $\overline e_{ij}(y)$.  Remark \ref{rem:pm} and Propositions \ref{prop:inv}, \ref{prop:canon1}, and \ref{prop:canon2} apply in this more general context.

\begin{proposition}\label{prop:canon3} For $a_1,\dots,a_{q-1} \in |JX|,\; b \in |\overline F_1(X,Y)|$ there is a canonical homotopy
between 
\begin{align*}
&e_{12}(-a_1) \cdot e_{23}(-a_2)\cdot\dots\cdot e_{q-1q}(-a_{q-1})\cdot\overline e_{q1}(b)\;\;\text{ and } \\
&\overline e_{11}((-1)^{q-1}(\overset{q-1}{\underset{i=1}{\prod}} a_i)b) + \overline e_{21}((-1)^{q-2}(\overset{q-1}{\underset{i=2}{\prod}} a_i)b) + \dots +  \overline e_{q-11} ((-1)a_{q-1}b) + \overline e_{q1} (b).
\end{align*}
\end{proposition}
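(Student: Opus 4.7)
The plan is to adapt the proof of Proposition \ref{prop:canon1} essentially verbatim, observing that the only role played by $\imath(|X|)$ and $\imath(|Y|)$ in that earlier argument was to provide elements to feed into the maps $f(s)=[s,-] : S^n \to S^n \wedge |JX|$ (respectively $S^n \to S^n \wedge |\overline F_1(X,Y)|$) used in the definition of the elementary expansions. Since $e_{ij}(a)$ and $\overline e_{ij}(b)$ were extended to arbitrary $a\in |JX|$ and $b\in|\overline F_1(X,Y)|$ in the paragraph preceding the statement, and since $|\overline F_1(X,Y)|$ is a $|JX|$-bimonoid, every algebraic manipulation carried out in the proof of Proposition \ref{prop:canon1} carries through at this level of generality.

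Concretely, I would first evaluate the composite on $(S^n)_1$. After composing the first $k$ factors $e_{12}(-a_1)e_{23}(-a_2)\cdots e_{k,k+1}(-a_k)$, the map on $(S^n)_1$ is an iterated pinch $S^n \to \bigvee^{k+1} S^n$ followed, on the $j$th summand ($1\le j\le k$), by $[\,\cdot\,,(-1)^{j-1}\prod_{i=1}^{j}a_i\,]$ placed in position $j+1$, together with the identity on the first summand going to position $1$. Applying the final factor $\overline e_{q1}(b)$ sends position $q$ (and only position $q$) into position $1$ via $s\mapsto [s,b]$, and kills the other coordinates; combined with the multiplicativity of the action this places the term $(-1)^{q-j}(\prod_{i=j}^{q-1}a_i)\,b$ at position $1$ coming from $(S^n)_j$. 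For $j=1,\dots,q$ (with the convention that the $j=q$ term has empty product equal to $b$) these terms are exactly the factors of $\overline e_{j1}$ on the right hand side.

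To rearrange the iterated pinch as a loop sum of the $q$ reduced elementary expansions $\overline e_{j1}(\cdots)$, one uses the same two reparametrizations as in the proof of Proposition \ref{prop:canon1}: (i) the associativity homotopy between the two ways of iterating pinch maps $S^n \to S^n\vee S^n$, and (ii) the homotopy that shuffles the $(-1)$ factors (each arising from the choice of $S^n \xrightarrow{-1} S^n$ representing loop inverse) into a single leading sign $(-1)^{q-j}$ per summand. Both of these depend only on the spherical coordinates and not on $a_1,\dots,a_{q-1},b$, so they are natural in these parameters and in $X,Y$; that is precisely what ``canonical'' means in the sense of the paragraph preceding Proposition \ref{prop:inv}.

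The main (minor) subtlety is bookkeeping the signs: in Proposition \ref{prop:canon1} the expression $\prod_{i=j}^{q-1}(-\imath(x_i))$ was written compactly inside $|J(X\vee Y)|$, but in the current generality one must carry the explicit $(-1)^{q-j}$ out in front, exactly as the remark following equation for $(-1)^{q-1-j}\prod\imath(x_i)$ in the proof of Proposition \ref{prop:canon1} warned. Once this is done, no new ideas are needed, and naturality of the reparametrizations in the spherical coordinates delivers the canonical homotopy in the sense required.
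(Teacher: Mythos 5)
Your proposal is correct and follows the same path as the paper, which simply remarks that the proofs of this proposition and the one following it are ``exactly as before,'' referring back to Propositions \ref{prop:canon1} and \ref{prop:canon2}; in particular you correctly identify that the role of $\imath(|X|)$ and $\imath(|Y|)$ in those proofs was only to supply inputs for the maps $f(s)=[s,-]$, that the $|JX|$-bimonoid structure on $|\overline F_1(X,Y)|$ supplies the needed multiplication, and that canonicity comes from the same two spherical-coordinate reparametrizations as before. The only slip is a small bookkeeping error in the intermediate description on $(S^n)_1$: after $k$ factors, the summand landing in position $j+1$ should carry $(-1)^{j}\prod_{i=1}^{j}a_i$ (one sign per factor $e_{i,i+1}(-a_i)$), not $(-1)^{j-1}$ --- but this does not propagate, and your final term $(-1)^{q-j}(\prod_{i=j}^{q-1}a_i)b$ coming from $(S^n)_j$ is exactly the right-hand side of the proposition.
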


The representations $\rho_q$ also extend in a natural way to yield a continuous map $\rho_q : |JX|^q \to|H^n_q(|JX|)|$, which on a
$q$-tuple $(a_1,\dots,a_q)\in |JX|^q$ is given exactly as in (\ref{eqn:elem1}), where $f_i$ is now the map $f_i(s) = [s,a_i] \in S^n \wedge |JX|$. Proposition \ref{prop:extend} applies with $|JX|$ in place of $|X|$ for the domain of $\tilde\rho_q$; in fact it is easy to see that the map defined in that proposition factors by this extension.

\begin{proposition} For $a_1,\dots,a_{q-1} \in |JX|, b \in |\overline F_1(X,Y)|$, let 
\begin{gather*}
\overline\rho^1_q (a_1,\dots,a_{q-1}) = \rho_q(a_1,a_2,\dots,a_{q-1},*)\\
\overline\rho^2_q(b) = p_2\rho_q(*,*,\dots,*,b)
\end{gather*}
as in (\ref{eqn:row12}).  Then as continuous maps $\overline\rho^1_q$ and $\overline\rho^2_q$ are canonically homotopic to the following product of elementary expansions: 
\begin{gather*}
\overline\rho^1_q(a_1,a_2,\dots,a_{q-1})\simeq
e_{(q-1)q}(a_{q-1})e_{(q-2)(q-1)}(a_{q-2})\cdot\dots\cdot e_{12}(a_1) \\
\overline\rho^2_q(b)\simeq \overline e_{q1}(b)\;.
\end{gather*}
\end{proposition}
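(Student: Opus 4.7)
The plan is to mimic the proof of Proposition \ref{prop:canon2} essentially verbatim, since the paragraph preceding the statement extends all of the relevant constructions --- the elementary expansions $e_{ij}(y)$, $\overline e_{ij}(y)$, and the representations $\rho_q$ --- from the case $y = \imath(x)\in \imath(|X|)$ to arbitrary $y \in |JX|$ by the identical formulas. Thus no new geometric content is involved: the whole argument takes place in the spherical coordinate, with the elements $a_i$ (and $b$) appearing only passively as ``coefficients'' multiplied against spheres.

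First, I would verify both claimed canonical homotopies by restricting to a single wedge summand $(S^n)_\ell$ of the common domain $\overset q{\underset{k=1}{\vee}} (S^n)_k$. For $\overline\rho^1_q$, tracing through the composition $e_{(q-1)q}(a_{q-1})\cdot e_{(q-2)(q-1)}(a_{q-2})\cdots e_{12}(a_1)$ (applying expansions left-to-right under the convention $AB=B\circ A$), one sees that on $(S^n)_\ell$ with $\ell<q$ only the single factor $e_{\ell(\ell+1)}(a_\ell)$ acts non-trivially, producing exactly the composition
$$ S^n \xrightarrow{\text{pinch}} S^n\vee S^n \xrightarrow{id\vee f_\ell} S^n \vee (S^n\wedge |JX|)\xrightarrow{inc} (S^n\wedge|JX|_+)_\ell \vee (S^n\wedge|JX|_+)_{\ell+1}, $$
which is precisely the $\ell$-th component of formula (\ref{eqn:elem1}) with $f_\ell(s)=[s,a_\ell]$. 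On $(S^n)_q$ every $e_{k(k+1)}(a_k)$ has $i=k\neq q$, so the product reduces to the inclusion, while the $\rho_q$-side has trivial $f_q$ (since $a_q=*$), making $id\vee f_q$ canonically null-homotopic to $id$ and the whole expression canonically homotopic to the same inclusion.

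Second, the homotopies are canonical for the same reasons as in Proposition \ref{prop:canon2}: the only reparametrizations required are (i) reassociation of iterated pinch maps and (ii) contraction of $S^n\xrightarrow{\text{pinch}} S^n\vee S^n \xrightarrow{id\vee *} S^n$ to the identity. Both are performed purely in the spherical coordinate, independent of the $a_i$'s, and depend continuously and naturally on all the parameters involved, including $X$.

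For $\overline\rho^2_q$ the argument is simpler still. On $(S^n)_\ell$ with $\ell<q$, all inputs $a_1,\dots,a_{q-1}$ are equal to $*$, so $\rho_q(*,\dots,*,b)$ is canonically homotopic to the inclusion into $(S^n\wedge|J(X\vee Y)|_+)_\ell$, which maps into the identity (word-length-zero) summand and is thus killed by $p_2$. On $(S^n)_q$, formula (\ref{eqn:elem1}) gives the sum of an inclusion into position $q$ (killed by $p_2$) and the map $s\mapsto[s,b]$ into position $1$, leaving exactly $\overline e_{q1}(b)$. The main obstacle is merely bookkeeping of indices and composition order; there is no genuinely new homotopical input beyond Proposition \ref{prop:canon2}, which is why the author states the result without a separate proof.
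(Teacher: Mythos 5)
Your proposal is correct and follows the paper's own approach, which is simply to observe that the constructions of $e_{ij}(y)$, $\overline e_{ij}(y)$, and $\rho_q$ with arbitrary entries in $|JX|$ are verbatim the same as in the $\imath(|X|)$ case, so the reparametrizations in the spherical coordinate established for Proposition~\ref{prop:canon2} carry over unchanged; the paper disposes of this and Proposition~\ref{prop:canon3} together with a one-line remark that the proofs are ``exactly as before.'' Your summand-by-summand unwinding of the composite and the explicit identification of which reparametrizations are needed (the contraction of pinch-then-collapse on the $q$-th summand, eqn.~(\ref{eqn:1.3.16})) is a more detailed but equivalent rendering of the same argument.
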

The proofs of these two propositions is exactly as before.
\vskip.2in

Observe that when $a_1 = a_2 = \dots = a_{q-1} = *$, the map  $\overline\rho^1_q(a_1,\dots,a_{q-1})=\overline\rho^1_q(*,\dots,*)$ is not the standard inclusion $\overset{q}{\vee}(S^n) \hookrightarrow  \overset{q}{\vee} (S^n\wedge|JX|_+)$, only homotopic to it. This homotopy, which we will need later on, is a wedge of homotopies between
\begin{align}
S^n \overset{\text{pinch}}{\longrightarrow} S^n\vee S^n
&\overset{\text{id }\vee *}{\longrightarrow} S^n\vee S^n
\overset{\text{fold}}{\longrightarrow} S^n\nonumber \\
&\text{and}\label{eqn:1.3.16}\\
S^n &\overset{\text{id}}{\longrightarrow} S^n\nonumber
\end{align}
This wedge  produces a path between the basepoint of $H^n_q(|J(X)|)$ and $\overline\rho^1_q(*,\dots,*)$.
\vskip.2in
As a final remark, we note that in the above propositions involving minus signs, we are not requiring any type of coherence conditions to apply for this minus sign with respect to composition product (which in the limiting case $n\to\infty$ will involve the product structure on the generalized ring $\Omega^\infty\Sigma^\infty(|GX|_+)$).  We only require that certain homotopies can be made canonical. The restrictions on the ``ring'' under consideration that must be made in order for such a coherent $(-1)$ to exist are substantial, as shown by Schw\"anzl and
Vogt in \cite{sv}.
\newpage


\section{The trace}


\subsection{Manipulation in the stable range}

We follow closely the argument of Waldhausen [W2] in proving

\begin{theorem}\label{thm:2.1.1} Let $X$ and $Y$ be pointed simplicial sets, with $X$ connected and $Y$ $m$-connected.  Then the
two spaces 
\[
NH^n_k(|J(X\vee Y)|) \qquad \text{ and }\qquad N^{cy}(H^n_k(|JX|),M^n_k(\Sigma|\overline F_1(X,Y)|))
\]
 are $q$-equivalent, where $q =\min(n-2, 2m+1)$ and $n\ge 1$.
\end{theorem}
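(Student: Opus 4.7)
My plan is to adapt the approach in \cite{w2}, chaining together three comparisons each of which is a high-connectivity equivalence in the claimed range $q = \min(n-2,\, 2m+1)$. The first step approximates $NH^n_k(|J(X\vee Y)|)$ by the generalized wedge
\[
G_\bullet := \{[p]\mapsto \overset p{\vee}(\overline M^n_k(|F_1(X,Y)|_+),\, H^n_k(|JX|))\}.
\]
Since $Y$ is $m$-connected, the cofiber $J(X\vee Y)/F_1(X,Y)$ (consisting of words of length $\geq 2$ in $Y$) is $(2m+1)$-connected, while $F_1(X,Y)/JX = \overline F_1(X,Y)$ is $m$-connected. Applying Waldhausen's generalized wedge lemma [Lemma 2.2.1, W2] twice---first to approximate $NH^n_k(|J(X\vee Y)|)$ by a generalized wedge over $H^n_k(|JX|)$, then identifying the portion coming from $\overline M^n_k(|F_1|_+)$ via the $F_1$-approximation---will yield a $(2m+1)$-connected map $G_\bullet \to NH^n_k(|J(X\vee Y)|)$.

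Second, I would apply the projection $p_1\times p_2$ from Lemma \ref{lemma:p1p2} to obtain a map of generalized wedges from $G_\bullet$ to
\[
\{[p]\mapsto \overset p{\vee}(H^n_k(|JX|)\ltimes M^n_k(|\overline F_1(X,Y)|),\, H^n_k(|JX|))\}.
\]
Since the cofiber sequence $|JX|_+ \hookrightarrow |F_1(X,Y)|_+ \twoheadrightarrow |\overline F_1(X,Y)|$ splits after a single suspension, the mapping space $M^n_k(|F_1|_+)$ splits approximately into its diagonal $JX$-part and off-diagonal $\overline F_1$-part within the Freudenthal range, and the restriction of $p_1\times p_2$ to the homotopy-equivalence components $\overline M^n_k(|F_1|_+)$ should then be $(n-2)$-connected. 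Third, noting that $M^n_k(|\overline F_1|)$ carries the trivial partial monoid structure, Waldhausen's $u$-map [Lemma 2.3.1, W2] (a weak equivalence since $\pi_0 H^n_k(|JX|)$ is a group) gives
\[
N\bigl(H^n_k(|JX|)\ltimes M^n_k(|\overline F_1|)\bigr) \simeq \mathrm{diag}\bigl(N^{cy}(H^n_k(|JX|),\, NM^n_k(|\overline F_1|))\bigr);
\]
passing to realizations, $|NM^n_k(|\overline F_1|)|\simeq \Sigma|M^n_k(|\overline F_1|)|$, and the Freudenthal comparison $\Sigma M^n_k(V)\to M^n_k(\Sigma V)$ identifies the result with $N^{cy}(H^n_k(|JX|),\, M^n_k(\Sigma|\overline F_1(X,Y)|))$ within the $(n-2)$-range.

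The main obstacle will be tracking the connectivity bounds through each step---especially the middle step, where I need to confirm that $p_1\times p_2$ really is $(n-2)$-connected on the subset of homotopy equivalences, which requires a Freudenthal-type argument for the mapping spaces $M^n_k$ in the stable range. Composing the three comparisons and taking the minimum of $(2m+1)$ (from the first step) with $(n-2)$ (from the second and third) will yield the desired $q$-equivalence.
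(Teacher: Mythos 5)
Your proposal follows essentially the same route as the paper's own five-map chain: you merge the paper's first two comparisons into your first step, your second step is the paper's third map ($p_1\times p_2$, giving the $(n-2)$-bound), and your third step combines the paper's fourth (the $u$-equivalence of Lemma 2.3.1) and fifth (partial realization plus the pairing into $M^n_k(\Sigma V)$) maps. One small slip: the suspension-pairing comparison in your last step is $(2m+1)$-connected (a Freudenthal bound governed by the connectivity of $\overline F_1(X,Y)$, via the realization lemma), not $(n-2)$-connected as you assert---but since the theorem's bound is $\min(n-2,2m+1)$ and your first step already contributes $(2m+1)$, the final conclusion is unaffected.
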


\begin{proof} The notation is that of \S2.1.  Here the monoid structure on $H^n_k(|J(X\vee Y)|)$ and $H^n_k(|JX|)$ is the usual one,
while the partial monoid structure on the $H^n_k(|JX|)$-bimonoid $M^n_k(|\overline F_1(X,Y)|)$ is the trivial one.  The equivalence follows as
in [Theorem 3.1, W2] by the construction of five maps, each of which is suitably connected.
\vskip .2in

\underbar{The 1st map} $H^n_k(|J(X\vee Y)|)$ admits a partial monoid structure where two elements are composable iff at most one of them lies outside the submonoid $H^n_k(|JX|)$.  The nerve of this partial monoid is by definition the generalized wedge 
\[
\{[p] \mapsto \overset p{\vee} (H^n_k(|J(X\vee Y)|), H^n_k(|JX|))\}.
\]
As $Y$ is $m$-connected, the inclusion $H^n_k(|JX|) \to H^n_k(|J(X\vee Y)|)$ is also $m$-connected.  It follows [Lemma 2.2.1, W2] that the inclusion
\[
\{[p] \mapsto \overset p{\vee} (H^n_k(|J(X\vee Y)|), H^n_k(|JX|))\} \hookrightarrow NH^n_k(|J(X\vee Y)|)
\]
is $(2m+1)$-connected.
\vskip .2in

\underbar{The 2nd map} The inclusion $F_1(X,Y) \hookrightarrow J(X,Y)$ is $(2m+1)$-connected, hence induces a $(2m+1)$-connected map $\overline M^n_k(|F_1(X,Y)|_+) \to \overline M^n_k(|J(X\vee Y)|_+)$ of $H^n_k(|JX|)$-bimonoids.  This in turn induces an inclusion of generalized
wedges 
\begin{align*}
\{[p] &\mapsto \overset p{\vee} (\overline M^n_k(|F_1(X,Y)|_+), H^n_k(|JX|))\}\\
\hookrightarrow\{[p] &\mapsto \overset p{\vee} (\overline M^n_k(J(X\vee Y)|_+), H^n_k(|JX|))\}
\end{align*}

which is $(2m+1)$-connected in each degree by the gluing Lemma [Lemma 2.1.2, W2] and induction on $p$.  It follows that the inclusion of simplicial objects is also $(2m+1)$-connected.
\vskip .2in

\underbar{The 3rd map} We consider the restriction to the path components corresponding to $H^n_k(|J(X\vee Y)|)$ of the inclusion 
\begin{align*}
M^n_k(|F_1(X,Y)|_+) &= Map( \overset k{\vee} S^n, \overset k{\vee} S^n \wedge |F_1(X,Y)|_+)\\ 
&\hookrightarrow Map ( \overset k{\vee} S^n,\overset k{\prod} S^n \wedge |F_1(X,Y)|_+)\\ 
&\simeq \overset k{\prod}\,\overset k{\prod}\, \Omega^n\Sigma^n (|F_1(X,Y)|_+).
\end{align*}
This is an $(n-1)$-equivalence.  [Lemma 1, W2] yields an $(n-2)$-equivalence 
\[
\Omega^n\Sigma^n(|F_1(X,Y)|_+) \simeq
\Omega^n \Sigma^n (|JX_+\,\vee\, \overline F_1(X,Y)|) \to \Omega^n\Sigma^n (|JX|_+) \times \Omega^n \Sigma^n(|\ov{F}_1(X,Y)|).
\]
The gluing lemma now applies to show that the map on nerves of partial monoids defined in Lemma \ref{lemma:p1p2}
\begin{align*}
\{[p] &\mapsto \overset p{\vee}(\overline M^n_k (|F_1(X,Y)|_+), H^n_k(|JX|))\}\\
\to \{[p] &\mapsto \overset p{\vee}(H^n_k(|JX|) \ltimes M^n_k (|\ov{F}_1(X,Y)|), H^n_k(|JX|))\}
\end{align*}
is $(n-2)$-connected.
\vskip .2in

\underbar{The 4th map} Taking the trivial monoid structure on $M^n_k((|\ov{F}_1(X,Y)|)$ and forming its nerve, [Lemma 2.3, W2] provides an equivalence 
\begin{align*}
\text{diag} (&N^{cy} (H^n_k(|JX|),\Sigma . M^n_k (|\ov{F}_1(X,Y)|)))\\
\overset{u}{\underset{\simeq}{\longrightarrow}} &N(H^n_k(|JX|)\ltimes M^n_k(|\ov{F}_1(X,Y)|))\\
= &\{[p] \mapsto \overset p{\vee} (H^n_k(|JX|) \ltimes M^n_k (|\ov{F}_1(X,Y)|), H^n_k(|JX|))\}.
\end{align*}

Here $\Sigma . A$ denotes the simplicial space $\{[p] \mapsto \overset p{\vee} (A,*)\}$ which arises on taking the nerve of a trivial partial monoid.
\vskip .2in

\underbar{The 5th map} Partial geometric realization produces a map from $\Sigma. M^n_k(|\ov{F}_1(X,Y)|)$ to $ S^1 \wedge M^n_k(|\ov{F}_1(X,Y)|)$.  The pairing map
\[
S^1 \wedge M^n_k(|\ov{F}_1(X,Y)|) \to M^n_k(S^1 \wedge |\ov{F}_1(X,Y)|)
\]
 together with partial geometric realization produces a map 
\[
N^{cy}(H^n_k(|JX|), \Sigma . M^n_k (|\ov{F}_1(X,Y)|))  \to N^{cy}(H^n_k(|JX|), M^n_k(S^1 \wedge |\ov{F}_1(X,Y)|)) \; .
\]
By the realization lemma, this map is $(2m+1)$-connected.
\vskip .2in

These 5 maps taken together yield the required sequence connecting the spaces $N(H^n_k(|J(X\vee Y)|))$ and $N^{cy} (H^n_k(|JX|), M^n_k (\Sigma |\ov{F}_1(X,Y)|))$.  Each of the maps is {$\min (n-2, 2m+1)$}-connected and the theorem follows.
\end{proof}

The maps constructed in the above theorem are compatible with suspension in the $n$-coordinate as well as pairing under block sum, by which we will always mean the wedge-sum of section 2.1 for the appropriate monoid in question.  Taking the limit as $n$ goes to $\infty$ yields a sequence of maps connecting 
\[
\underset{k\ge 0}{\coprod} N(H_k(|J(X\vee Y)|)) 
\qquad\text{and}\qquad  
\underset{k\ge 0}{\coprod} N^{cy} (H_k(|JX|),  M_k (\Sigma |\ov{F}_1(X,Y)|));
\]
each of these maps preserves block-sum and is $(2m-1)$-connected for $(m-1)$-connected $Y$.  We thus get a sequence of maps between their group completions which is also $(2m-1)$-connected.  
\vskip.2in

Denote $\Omega B(\underset{k\ge 0}{\coprod} N^{cy}(H_k(|JX|), M_k(\Sigma |\ov{F}_1(X,Y)|))$ by $C(X,Y)$, and $C(X,_-)$ by $C_X(_-)$;
$C_X(_-)$ is a homotopy functor on the category $T(*)$.

\begin{lemma} (compare [Lemma 4.2, W2])\label{lemma:4.2} There is an equivalence of 1st differentials 
\begin{align*}
(D_1 A\Sigma)_X(Y) &= \underset {n}{\underset{\longrightarrow}{\lim}}\;
\Omega^n(hofibre(A(\Sigma(X\vee (S^n \wedge Y))) \to A(\Sigma X))) \\
\simeq (D_1 C_X)_*(Y) &= \underset {n}{\underset{\longrightarrow}{\lim}}\;
\Omega^n(hofibre(C(X,S^n \wedge Y)\to C(X,*)))\;.
\end{align*}
\end{lemma}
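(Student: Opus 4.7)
The plan is to deduce the equivalence from Theorem \ref{thm:2.1.1} by stabilizing, taking hofibres, and letting the suspension coordinate tend to infinity. The argument runs parallel to Waldhausen's proof of [Lemma 4.2, W2].

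First I would observe that by the definition of $A(\Sigma(-))$ one has
$$A(\Sigma(X\vee Y))\;\simeq\;\Omega B\bigl(\underset{k\ge 0}{\coprod}\; NH_k(|J(X\vee Y)|)\bigr),$$
while on the cyclic bar side the value at $Y=*$ degenerates: $\ov F_1(X,*)=*$, so $M_k(\Sigma|\ov F_1(X,*)|)=*$ and $N^{cy}(H_k(|JX|),*)=NH_k(|JX|)$. Hence $C(X,*)\simeq A(\Sigma X)$, and the map $C(X,Y)\to C(X,*)$ induced by the collapse $Y\to *$ corresponds under the zigzag of Theorem \ref{thm:2.1.1} to the map $A(\Sigma(X\vee Y))\to A(\Sigma X)$ induced by $X\vee Y\to X$.

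Passing to the direct limit in $n$ and group-completing in $k$, the remark immediately following Theorem \ref{thm:2.1.1} provides a natural zigzag
$$A(\Sigma(X\vee Y))\;\longleftrightarrow\;C(X,Y)$$
covering the collapse, each of whose constituent maps is $(2m-1)$-connected whenever $Y$ is $(m-1)$-connected. Since the map $A(\Sigma X)\to C(X,*)$ on base spaces is an equivalence, the standard gluing lemma (comparison of fibration sequences) then produces a zigzag of approximately $(2m-2)$-connected maps between the homotopy fibres
$$hofibre\bigl(A(\Sigma(X\vee Y))\to A(\Sigma X)\bigr)\;\longleftrightarrow\;hofibre\bigl(C(X,Y)\to C(X,*)\bigr).$$

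Finally, substitute $S^n\wedge Y$ for $Y$, with $Y$ a fixed connected basepointed space. Then $S^n\wedge Y$ is $n$-connected, so the zigzag on hofibres is at least $2n$-connected. Applying $\Omega^n$ shifts connectivity down by $n$ to produce an $n$-connected comparison map, and this connectivity tends to $\infty$ with $n$. The induced map on colimits is therefore a weak equivalence, yielding the identification $(D_1A\Sigma)_X(Y)\simeq (D_1C_X)_*(Y)$.

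\textbf{Main obstacle.} The delicate point is verifying that each of the five maps of Theorem \ref{thm:2.1.1} is functorial in $Y$ and degenerates correctly at $Y=*$, so that the zigzag genuinely covers the collapse map and hence permits the fibration comparison. This amounts to inspecting naturality in $Y$ for each of the five steps — the generalized wedge inclusion, the inclusion $F_1(X,Y)\hookrightarrow J(X\vee Y)$, the splitting from Lemma \ref{lemma:p1p2}, the map $u$ of [Lemma 2.3, W2], and the $M^n_k$--pairing into $S^1\wedge|\ov F_1(X,Y)|$ — each of which is manifestly natural from its construction.
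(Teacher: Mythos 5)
Your proposal is correct and follows essentially the same route as the paper's own (very brief) proof: apply Theorem~\ref{thm:2.1.1} with $Y$ replaced by $S^n\wedge Y$, note that $C(X,*)\simeq A(\Sigma X)$ since $\ov F_1(X,*)=*$, deduce a highly connected comparison of homotopy fibres, and pass to the limit after looping $n$ times. Your extra paragraph on naturality in $Y$ of each of the five maps is a useful gloss but matches what the paper implicitly takes for granted.
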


\begin{proof} This is an immediate consequence of the above theorem; for each $n$, we have an equivalence $A(\Sigma X)\simeq C(X,*)$ as well as a $(2n-1)$-equivalence between  $A(\Sigma(X\vee (S^n \wedge Y)))$ and $C(X,S^n \wedge Y)$.  This gives a $(2n-1)$-equivalence
between $hofibre (A(\Sigma(X\vee (S^n \wedge Y))) \to A(\Sigma X))$ and $hofibre (C(X,S^n \wedge Y_+) \to C(X,*))$ which in the above limit
yields a weak equivalence.
\end{proof}
\vskip.5in


\subsection{The Generalized Waldhausen Trace}

In this section we construct a trace map, generalizing the construction of Waldhausen in \cite{w2}.  The techniques are
essentially those of [\S4, W2].
\vskip.2in

Let $F,F'$ be basepointed spaces, with $F$ $(m-1)$-connected.

\begin{lemma}\label{lemma:connected} For all integers $k,m,m\ge0$ the map of spaces 
\begin{gather*}
Map(\overset k{\vee} S^n, S^{n+m} \wedge F')\wedge Map (S^{n+m}, S^{n+m} \wedge F) \\
\overset\lambda{\longrightarrow} Map (\overset k{\vee} S^n, S^{n+m} \wedge F' \wedge F)\;,
\end{gather*}
 
given by 
\[
\lambda(f\wedge g) : \overset k{\vee} S^n \overset f{\rightarrow} S^{n+m}\wedge F' \overset{g\wedge id}{\longrightarrow} S^{n+m} \wedge F\wedge F' \xrightarrow[\cong]{id\wedge\text{switch}}
S^{n+m} \wedge F'\wedge F\;, 
\]
is $(3m-1)$-connected.
\end{lemma}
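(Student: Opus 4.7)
The plan is to factor $\lambda$ through a simpler ``external smash'' pairing that bypasses the $\Omega^{n+m}\Sigma^{n+m}$-factor, then estimate each piece by Freudenthal. Define
\[
\mu : \operatorname{Map}(\textstyle\bigvee^k S^n,\ S^{n+m}\wedge F') \wedge F \longrightarrow \operatorname{Map}(\textstyle\bigvee^k S^n,\ S^{n+m}\wedge F'\wedge F)
\]
by $\mu(f\wedge a)(v)=\tau(f(v)\wedge a)$ with $\tau$ the evident switch, and let $\phi : F\to\Omega^{n+m}\Sigma^{n+m}F$ denote the unit of adjunction. A direct computation gives $\lambda\circ(\mathrm{id}\wedge\phi)=\mu$: when $g=\phi(a)$ is $s\mapsto s\wedge a$, the map $g\wedge\mathrm{id}_{F'}$ sends $s\wedge a'\mapsto s\wedge a\wedge a'$, and after the final switch we recover $f(v)\wedge a$.

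I would estimate the connectivity of $\mu$ first in the case $k=1$. Set $W:=\Sigma^m F'$, which is $(m-1)$-connected. Then $\mu_1$ sits in the commutative triangle
\[
W\wedge F\ \xrightarrow{\eta_W\wedge\mathrm{id}}\ \Omega^n\Sigma^n W\wedge F\ \xrightarrow{\mu_1}\ \Omega^n\Sigma^n(W\wedge F),
\]
whose composite is the Freudenthal unit $\eta_{W\wedge F}$. Since $W$ is $(m-1)$-connected, $\eta_W$ is $(2m-1)$-connected by Freudenthal, and smashing with the $(m-1)$-connected space $F$ yields a $(3m-1)$-connected map; since $W\wedge F$ is $(2m-1)$-connected, $\eta_{W\wedge F}$ is $(4m-1)$-connected. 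A two-line homotopy-group chase then forces $\mu_1$ to be $(3m-1)$-connected. The passage from $k=1$ to general $k$ uses the wedge-into-product inclusions $\bigvee^k A_1\hookrightarrow\prod^k A_1$ (a $(2m-1)$-connected map for the $(m-1)$-connected $A_1:=\Omega^n(S^{n+m}\wedge F')$, hence $(3m-1)$-connected after smashing with $F$) and $\bigvee^k C_1\hookrightarrow\prod^k C_1$ (a $(4m-1)$-connected map for the $(2m-1)$-connected $C_1$); wedging the $k=1$ estimate and sliding through these comparisons preserves the bound $3m-1$.

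For $\mathrm{id}\wedge\phi$: the unit $\phi$ is $(2m-1)$-connected by Freudenthal on the $(m-1)$-connected $F$, and $\operatorname{Map}(\bigvee^k S^n, S^{n+m}\wedge F')$ is $(m-1)$-connected, so the smash is $(3m-1)$-connected. Combining $\mu=\lambda\circ(\mathrm{id}\wedge\phi)$ with $(3m-1)$-connectivity of both factors, a final $\pi_*$-chase (for $i<3m-1$ both factors are isomorphisms on $\pi_i$; at $i=3m-1$ surjectivity of each forces surjectivity of $\lambda_*$) yields the claimed $(3m-1)$-connectivity of $\lambda$.

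The principal obstacle is the $k=1$-to-general-$k$ reduction: one must verify that the wedge-into-product discrepancy sits just outside the stable range of the $k=1$ argument, so that the Freudenthal bound $3m-1$ is not degraded. Everything else is a routine combination of the Freudenthal theorem with the smash-connectivity formula $\operatorname{conn}(X\wedge Y)=\operatorname{conn}(X)+\operatorname{conn}(Y)+1$.
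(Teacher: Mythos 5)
Your proof is correct and follows essentially the same strategy as the paper, which simply cites Waldhausen's Lemma~4.3 of [W2]: replace the mapping spaces by their Freudenthal-unit approximations, track connectivities via the Freudenthal theorem and the smash-connectivity formula, and finish by a $\pi_*$-chase. The paper packages all the replacements (Freudenthal unit on $\Sigma^m F'$, on $F$, and the wedge-into-product comparison) into a single commutative ladder with the isomorphism $\vee^k(\Sigma^m F')\wedge F\cong\vee^k\Sigma^m(F'\wedge F)$ at the bottom, whereas you factor out $\mathrm{id}\wedge\phi$ first and handle $k=1$ before passing to general $k$ via wedge-into-product; the organization differs but the inputs and the $3m-1$ bottleneck (smashing a $(2m-1)$-connected Freudenthal unit against an $(m-1)$-connected factor) are identical.
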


\begin{proof} This is a slight generalization of [Lemma 4.3, W2], and the proof is the same.  Namely, there is a commutative diagram 
\vskip.1in
\[
\diagram
Map(S^n,S^{n+m} \wedge F')^k \wedge Map (S^{n+m}, S^{n+m} \wedge F) \drto & \\
& Map(S^n, S^{n+m} \wedge F' \wedge F)^k \\
\overset k{\vee}(S^0 \wedge S^m\wedge F') \wedge Map (S^{n+m}, S^{n+m} \wedge F) \uuto & \\
\overset k{\vee}(S^0\wedge S^m\wedge F') \wedge Map(S^m, S^m\wedge F) \uto & 
\overset k{\vee} S^0\wedge(S^m\wedge F'\wedge F) \xto[-2,0] \\
\overset k{\vee}(S^0\wedge S^m\wedge F') \wedge F \uto \urto_{\cong}|<{\rotate\tip} & \\
\enddiagram
\] 
\vskip.1in
where the top horizontal map corresponds to the map given above, the right vertical map is $(4m-1)$-connected and each of the left vertical maps is $(3m-1)$-connected.
\end{proof}

For connected $Y'$, we have a homeomorphism of $|JX|$-bimonoids
\[
|\overline F_1 (X,Y')|\cong |JX|_+ \wedge |Y'| \wedge |JX|_+.
\]
If $Y' = S^m \wedge Y_+$ then the above lemma applies with $F = \overset k{\vee}|Y'|\wedge |JX|_+$ and $F' = |JX|_+$.  This yields a sequence of maps
\begin{gather}
H^n_k(|JX|)^p \times Map (\overset k{\vee} S^n, \overset k{\vee} S^{n+m} \wedge |\overline F_1(X,Y')|)\nonumber\\
\cong \updownarrow\varphi^p_{m,n,k}\nonumber\\
H^n_k(|JX|)^p \times Map (\overset k{\vee} S^n, \overset k{\vee} S^{n+m} \wedge |JX|_+ \wedge |Y'| \wedge |JX|_+)\label{eqn:2.2.2}\\
\uparrow f^p_{m,n,k}\nonumber\\
H^n_k(|JX|)^p \times (Map(\overset k{\vee} S^n,S^{n+m} \wedge |JX|_+) \wedge Map (S^{n+m},\,\overset k{\vee} S^{n+m} \wedge |Y'|\wedge |JX|_+))\nonumber\\
\downarrow g^p_{m,n,k}\nonumber\\
Map(S^{n+m}, \, S^{n+3m} \wedge |JX|_+ \wedge |Y_+|).\nonumber
\end{gather}

Here $f^p_{m,n,k}$ is induced by the pairing in Lemma \ref{lemma:connected}, and is $(3m-1)$-connected.  The map $g^p_{m,n,k}$ associates to the $(p+2)$-tuple $(\alpha_1,\dots,\alpha_p;\beta_1\wedge \beta_2)$ the composition

\begin{align}\label{eqn:2.2.3}
S^{n+m} \overset{\beta_2}{\longrightarrow} &\overset k{\vee} S^{n+m} \wedge |Y'|\wedge |JX|_+\nonumber\\
\overset\cong{\longleftrightarrow} &\overset k{\vee} (S^n\wedge |JX|_+)\wedge (S^m\wedge |Y'|)\nonumber\\
\xrightarrow{(\alpha_1\alpha_2\cdot\dots\cdot\alpha_p)\wedge id} &\overset k{\vee} (S^n\wedge |JX|_+)\wedge (S^m \wedge |Y'|)\nonumber\\ 
\overset\cong{\longleftrightarrow} (&\overset k{\vee} S^n) \wedge (|JX|_+\wedge S^m \wedge |Y'|) \\ 
\xrightarrow{\beta_1\wedge id}(&S^{n+m}\wedge |JX|_+)\wedge (|JX|_+ \wedge S^m \wedge |Y'|)\nonumber\\
\overset\cong{\longleftrightarrow} &S^{n+m}\wedge (|JX|_+ \wedge |JX|_+)\wedge (S^m \wedge |Y'|)\nonumber\\ 
\xrightarrow{id\wedge\mu\wedge id} &S^{n+m} \wedge |JX|_+ \wedge S^m \wedge |Y'|\nonumber\\
\overset\cong{\longleftrightarrow} &S^{n+3m} \wedge |JX|_+ \wedge |Y_+|.\nonumber
\end{align}

The equivalence of trivial partial monoids
\[
Map (\overset k{\vee} S^n, \overset k{\vee} S^{n+m} \wedge |JX|_+ \wedge |Y'| \wedge |JX|_+) \cong  Map (\overset k{\vee} S^n, \overset k{\vee} S^{n+m} \wedge |\overline F_1(X,Y')|)
\]
which induces the equivalence $\varphi^p_{m,n,k}$, commutes with the natural left and right actions of $H^n_k(|JX|)$, and therefore is an $H^n_k(|JX|)$-bimonoid equivalence. 

Starting with maps $f\in H^n_k(|JX|), g\in Map(\overset k{\vee} S^n, S^{n+m} \wedge |JX|_+), h \in Map(S^{n+m}, \overset k{\vee} S^{n+m} \wedge |Y'|\wedge |JX|_+)$ the pairings
\begin{gather}
(f,g) \mapsto f\cdot g,\nonumber \\ 
f\cdot g : \overset k{\vee} S^n
\overset\cong{\longleftrightarrow} \overset k{\vee} S^n \wedge S^0 \overset{id\wedge \iota_*}{\hookrightarrow} \overset k{\vee} (S^n \wedge |JX|_+) \overset f{\rightarrow} \overset k{\vee} (S^n \wedge |JX| _+)\label{eqn:2.2.4.a} \\
\overset\cong{\underset{\beta}{\longrightarrow}} (\overset k{\vee} S^n)\wedge  |JX|_+ \overset{g\wedge id}{\longrightarrow} (S^{n+m} \wedge |JX|_+)\wedge |JX|_+\overset\cong{\longleftrightarrow}\nonumber\\ 
S^{n+m} \wedge (|JX|_+\wedge |JX|_+)\overset{id\wedge\mu}{\longrightarrow} S^{n+m} \wedge |JX|_+\nonumber
\end{gather}

and

\begin{gather}
(h,f)\mapsto h\cdot f, \nonumber\\ 
h\cdot f : S^{n+m} \overset h{\rightarrow} \overset k{\vee} S^{n+m} \wedge |Y'| \wedge |JX|_+
\overset\cong{\underset{\alpha}\longrightarrow} S^m \wedge |Y'|\wedge (\overset k{\vee} S^n\wedge |JX|_+)\label{eqn:2.2.4.b} \\
\overset{id\wedge f}{\longrightarrow} S^m \wedge |Y'|\wedge (\overset k{\vee} S^n\wedge |JX|_+) \overset\cong{\underset{\alpha^{-1}}\longrightarrow} \overset k{\vee} S^{n+m} \wedge |Y'| \wedge |JX|_+ \nonumber
\end{gather}
 
induce a left $H^n_k(|JX|)$-monoid structure on $Map(\overset k{\vee} S^n, S^{n+m}\wedge |JX|_+)$ and a right $H^n_k(|JX|)$-monoid structure on $Map(S^{n+m}, \overset k{\vee} S^{n+m} \wedge |Y'|\wedge |JX|_+)$ for appropriate choices of homeomorphisms $\alpha,\beta$ and inclusion $\iota_*$. Precisely,
\begin{itemize}
\item $\iota_*:S^0\to |JX|_+$ is induced by the inclusion of the basepoint of $|JX|$;
\item $\beta$ is the homeomorphism $\overset k{\vee} (S^n \wedge |JX|_+)\cong (\overset k{\vee} S^n)\wedge  |JX|_+$ given as $\beta = \underset{j=1}{\overset k{\vee}} (\iota_j\wedge id)$, with $\iota_j$ denoting the inclusion of $S^n$ as the $j^{\text{th}}$ term in the wedge $\overset k{\vee} S^n$;
\item $\alpha$ is induced by fixed choice of homeomorphism $\overset k{\vee} S^{n+m}\cong S^m\wedge(\overset k{\vee} S^n)$, extended in the obvious way to include the $|Y'|$ and $|JX|_+$-factors.
\end{itemize}

Taken together, these actions define an $H^n_k(|JX|)$-bimonoid structure on the space \newline
$Map(\overset k{\vee} S^n,S^{n+m}\wedge |JX|_+) \wedge Map(S^{n+m}, \overset k{\vee} S^{n+m} \wedge |Y'|\wedge |JX|_+)$.  With this structure, the pairing map of Lemma \ref{lemma:connected} (for $F=\overset k{\vee} S^m \wedge |JX|_+$ and $F' = |JX|_+ \wedge |Y'|$) becomes an $H^n_k(|JX|)$-bimonoid map.
\vskip.2in

Although the maps $\alpha$ and $\beta$ above depend on $k$ and $n$, they clearly can be chosen so as to be compatible with any particular fixed choice of stabilization in the $k$-coordinate and suspension in the $n$-coordinate. Consequently, all of the above maps in (\ref{eqn:2.2.4.a}) and (\ref{eqn:2.2.4.b}) can be done compatibly with respect to both stabilization in the $k$-coordinate and suspension in the $n$-coordinate.  The equivalence $|\overline F_1 (X,S^m\wedge Y_+)\, |\overset\cong{\to} |JX|_+ \wedge S^m \wedge |Y_+| \wedge |JX|_+$ is compatible with suspension in the $m$-coordinate and so $\varphi^p_{m,n,k}$ in (\ref{eqn:2.2.2}) is compatible with suspension in both the $n$- and $m$-coordinates.  By much the same reasoning, the homeomorphisms appearing in (\ref{eqn:2.2.3}) involve a choice of natural equivalences, which can be chosen so as to be compatible with respect to suspension in these coordinates.  And with $\alpha_1, \beta_1$ and $\beta_2$ as in (\ref{eqn:2.2.3}) it follows directly from (\ref{eqn:2.2.3}) and (\ref{eqn:2.2.4.a}) that $g^1_{m,n,k}(\alpha_1 ; \beta_1 \wedge \beta_2) = g^0_{m,n,k}((\alpha_1\beta_1)\wedge \beta_2) = g^0_{m,n,k}(\beta_1\wedge(\beta_2\alpha_1))$.  Finally, by construction $\underset{k\ge 0}{\coprod} g^p_{m,n,k}$ maps wedge-sum to loop sum.  Putting this all together, we get

\begin{theorem}\label{thm:2.2.5} For each $n,m,k \ge 1, \{\varphi^p_{m,n,k}\}_{p\ge 0}, \{f^p_{m,n,k}\}_{p\ge 0}$ and
$\{g^p_{m,n,k}\}_{p\ge 0}$ induce well-defined maps of simplicial
spaces: 
\begin{gather*}
N^{cy}(H^n_k(|JX|) , M^n_k(S^m\wedge |\overline F_1 (X,Y')|)) \\
\cong\updownarrow\varphi^\cdot_{m,n,k} \\
N^{cy}(H^n_k(|JX|) , M^n_k (S^m\wedge |JX|_+ \wedge|Y'|\wedge |JX|_+)) \\
\uparrow f^\cdot_{m,n,k} \\ N^{cy}(H^n_k(|JX|) , Map (\overset k{\vee} S^n, S^{n+m} \wedge |JX|_+) \wedge Map (S^{n+m},\,\overset k{\vee} S^{n+m} \wedge |Y'|\wedge |JX|_+))\\
\downarrow g^\cdot_{m,n,k} \\
Map (S^{n+m}, S^{n+3m} \wedge |JX|_+ \wedge |Y_+|)), 
\end{gather*}

where the simplicial structure on the range of $(g^\cdot_{m,n,k})$ is trivial, $Y' = S^m \wedge Y_+$, and $f^\cdot_{m,n,k}$ is $(3m-1)$-connected.  These maps are compatible with suspension in the $m$ and $n$ coordinates, and stabilization in the $k$-coordinate.
They are also natural with respect to $X$ and $Y$ (where $X$ is connected).
\end{theorem}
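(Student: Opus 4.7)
The plan is to check three things for each of the three map families $\varphi^\cdot$, $f^\cdot$, $g^\cdot$: that they are well-defined on the cyclic bar construction (respect face and degeneracy operators), the claimed connectivity of $f^\cdot$, and compatibility with suspension, stabilization, and naturality. Most of the heavy lifting is already built into the definitions; what remains is to assemble it.

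For $\varphi^\cdot_{m,n,k}$, the underlying homeomorphism $|\ov{F}_1(X,Y')|\cong |JX|_+\wedge|Y'|\wedge|JX|_+$ is an $H^n_k(|JX|)$-bimonoid equivalence (as noted just after its definition), and $N^{cy}(H^n_k(|JX|),-)$ is a functor from bimonoids to simplicial spaces, so $\varphi^\cdot$ is automatically a simplicial equivalence. For $f^\cdot_{m,n,k}$, the left and right actions set up in (\ref{eqn:2.2.4.a}), (\ref{eqn:2.2.4.b}) make the smash product appearing in (\ref{eqn:2.2.2}) an $H^n_k(|JX|)$-bimonoid, and the pairing $\lambda$ of Lemma \ref{lemma:connected} is a bimonoid map under these structures, so it induces a well-defined simplicial map on cyclic bar constructions. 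Degree-wise $(3m-1)$-connectivity follows from Lemma \ref{lemma:connected} applied to the $\lambda$-factor, with the $H^n_k(|JX|)^p$-factor held fixed.

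The principal verification is $g^\cdot_{m,n,k}$. Since the target is a constant simplicial space, we must show $g^p$ agrees with $g^{p-1}\circ\partial_i$ for every face map and is compatible with the degeneracies. Inner face maps $\partial_i$, $0<i<p$, simply collapse adjacent pairs $m_i m_{i+1}$, consistent with (\ref{eqn:2.2.3}) since only the product $\alpha_1\alpha_2\cdots\alpha_p$ enters there. The outer face maps $\partial_0$ and $\partial_p$ require the identity
\[ g^1_{m,n,k}(\alpha_1;\beta_1\wedge\beta_2) = g^0_{m,n,k}((\alpha_1\beta_1)\wedge\beta_2) = g^0_{m,n,k}(\beta_1\wedge(\beta_2\alpha_1)), \]
which is the observation stated just before the theorem. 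I would prove it by direct unwinding of (\ref{eqn:2.2.3}) together with the action formulas (\ref{eqn:2.2.4.a}), (\ref{eqn:2.2.4.b}): the composite in (\ref{eqn:2.2.3}) slides the factor $\alpha_1\cdots\alpha_p$ between the two $|JX|_+$ coordinates, and moving it entirely to the $\beta_1$-side or the $\beta_2$-side corresponds precisely to the left or the right $H^n_k(|JX|)$-action. Iterating the $p=1$ identity produces invariance of $g^p$ under $\partial_0$ and $\partial_p$ in every degree; degeneracies insert a unit in the $\alpha$-sequence, which is absorbed by the multiplication $\mu$.

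The remaining properties are routine. Compatibility with suspension in $n,m$ and stabilization in $k$ follows from the compatibility of the chosen homeomorphisms $\alpha$, $\beta$, the basepoint inclusion $\iota_*$, and the section $j$ of $\Sigma p:\Sigma(|JX|_+)\to\Sigma|JX|$, as explicitly indicated in the paragraph preceding the theorem. That $\coprod_k g^p_{m,n,k}$ sends wedge-sum to loop sum follows directly from (\ref{eqn:2.2.3}). Naturality in $X$ and $Y$ is immediate from functoriality of every construction involved. The only real technical obstacle is the invariance identity for $g^\cdot$, and that is a direct if slightly tedious diagram chase; everything else is bookkeeping.
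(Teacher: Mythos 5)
Your approach is essentially the same as the paper's: the paper's proof of this theorem is a single short remark, because all of the bookkeeping you carry out — the bimonoid equivalence underlying $\varphi^\cdot$, the bimonoid map structure of $\lambda$ giving $f^\cdot$, the well-definedness of $g^\cdot$ via the identity $g^1_{m,n,k}(\alpha_1;\beta_1\wedge\beta_2) = g^0_{m,n,k}((\alpha_1\beta_1)\wedge\beta_2) = g^0_{m,n,k}(\beta_1\wedge(\beta_2\alpha_1))$, and the compatibility/naturality statements — is exactly what was set up in the paragraphs preceding the theorem, and the paper simply declares that material "already covered." Re-deriving it as you do is fine and checks out.

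The one genuine gap: the paper identifies the connectivity of $f^\cdot_{m,n,k}$ as the only point \emph{not} already covered, and its entire proof consists of observing that $(3m-1)$-connectivity of the simplicial map $f^\cdot_{m,n,k}$ follows from the degreewise $(3m-1)$-connectivity of each $f^p_{m,n,k}$ by the realization lemma. You establish the degreewise statement ("Degree-wise $(3m-1)$-connectivity follows from Lemma \ref{lemma:connected}...") but stop there, never passing from the degreewise connectivity of the levels $f^p$ to the connectivity of (the realization of) $f^\cdot$. That passage is not automatic for maps of simplicial spaces and is precisely what the realization lemma provides; it is also the one sentence that constitutes the paper's actual proof. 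You should add the invocation of the realization lemma to close the argument.
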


\begin{proof} The only point that has not already been covered is the statement concerning the connectivity of $f^\cdot_{m,n,k}$.  But this follows by the realization lemma, as $f^p_{m,n,k}$ is $(3m-1)$-connected for each $p$.
\end{proof}

Now $g^\cdot$ takes wedge-sum to loop sum, as we have already noted, and thus factors via group completion with respect to wedge-sum.  So passing to the limit in $m$ yields a map $T$:
\begin{gather*}
(D_1C_X)_*(Y_+) \\ 
=\underset{m}{\underset{\longrightarrow}{\lim}}\;\Omega^m(hofibre (C(X, S^m \wedge Y_+) \to C(X,*))) \\
\overset T{\rightarrow}\; \Omega^\infty\Sigma^\infty(\Sigma(\underset{q\ge 1}{\vee}\; |X^{[q-1]} \wedge Y_+ |)).
\end{gather*}

Precomposing by the equivalence of Lemma \ref{lemma:4.2} we get 
\[ 
Tr_X(Y) :(D_1 A\Sigma)_X(Y_+) \to \Omega^\infty\Sigma^\infty(\Sigma(\underset{q\ge 1}{\vee} |X^{[q-1]} \wedge Y_+ |)).  
\]
(in the case $X = \{pt\}$ we recover the map constructed in \cite{w2}).  This map is natural in both $X$ and $Y$.  Taking the fibre with respect to the map $Y_+ \to \{pt\}$ yields (for basepointed $Y$) the (reduced) \underbar{generalized Waldhausen trace map}
\begin{equation}\label{eqn:gentrace}
\overline{Tr}_X(Y) : (D_1 A\Sigma)_X(Y) \to \Omega^\infty\Sigma^\infty(\Sigma(\underset{q\ge 1}{\vee} |X^{[q-1]} \wedge Y|)) 
\end{equation}
where on the right we have, for $q\ge 1$, composed with the (basepointed) projection $Y_+ \to Y$ . Finally, we can follow by projection to the $q$th factor $\Omega^\infty\Sigma^\infty(\Sigma|X^{[q-1]} \wedge Y|)$; this yields a map
\[
\overline{Tr}_X(Y)_q : (D_1  A\Sigma)_X(Y) \to \Omega^\infty\Sigma^\infty(\Sigma |X^{[q-1]} \wedge Y|).
\]
For connected $X$, $Tr_X(Y) \cong \underset{q\ge 1}{\prod} Tr_X(Y)_q$ .
\vskip.5in


\subsection{Computing the trace on $\ov\rho_q$}

By the results of section 1.3, there is a map
\[
\tilde\rho = \underset{q\ge 1}{\prod} \tilde\rho_q : \tilde D(X) =  \underset{q\ge 1}{\prod} \tilde D_q(X) \to \overline A(\Sigma X)
\] 
defined for any connected simplicial set $X$.  This map is natural in $X$,  and is induced by the representations $\rho_q : |X|^q \to |H^n_k(|JX|)|$.   The product that appears on the L.H.S. is the weak product; note, however, that as $X$ is connected, the weak product in this case is weakly equivalent to the strong product. Replacing $X$ by $X\vee Y$, we define $\rho_q(X,Y)$ as  the restriction of $\rho_q$ to  $|X|^{q-1} \times |Y| \subset |X\vee Y|^q$.  This  inclusion induces the inclusion $i_q(X,Y)$ of Proposition \ref{prop:rep1} after passing to smash products.  Let $\tilde i_q(X,Y) = \Omega^\infty\Sigma^\infty(\Sigma i_q(X,Y))$. Then the composition 
\begin{gather*}
\tilde\rho_q(X,Y) : \Omega^\infty\Sigma^\infty(\Sigma |X^{[q-1]} \wedge Y|) \xrightarrow{\tilde i_q(X,Y)}
 \Omega^\infty\Sigma^\infty (\Sigma |X\vee Y|^{[q]}) \to \\ 
\Omega^\infty\Sigma^\infty (\Sigma (E\Bbb Z/q\underset{\Bbb Z/q}{\leftthreetimes} |(X\vee Y)^{[q]}|)) = \tilde D_q(X\vee Y)
\overset{\tilde\rho_q}{\longrightarrow} \overline A(\Sigma(X\vee Y))
\end{gather*}
can alternatively be described as the precomposition of $\Omega^\infty\Sigma^\infty(\Sigma\rho_q(X,Y))$ with the stable section $s : \Omega^\infty\Sigma^\infty (\Sigma |X^{[q-1]} \wedge Y|) \to \Omega^\infty\Sigma^\infty (\Sigma |X|^{q-1} \times |Y|)$, followed by the map into $ (\overline A\Sigma)(X\vee Y)$.  Proposition \ref{prop:rep1} tells us that the map $\tilde i_q(X,Y)$ induces an equivalence $\Omega^\infty\Sigma^\infty(\Sigma  X^{[q-1]} \wedge Y|) \overset\simeq{\rightarrow} (D_1\tilde D_q)_X(Y)$, and Goodwillie's results tell us that $\tilde\rho$ is an equivalence for connected spaces iff $(D_1\tilde\rho)_X(Y)$ is an equivalence for all connected $X$.  They also tell us that $(D_1 A\Sigma)_X(Y)$ and $(D_1\tilde D)_X(Y)$ are the same for connected $X$. Thus the primary task is to show that  $\overline {Tr}_X(Y) \circ (D_1\tilde\rho)_X(Y)$ is an equivalence for all connected $X$.

\begin{theorem}  For $p\ne q, \overline {Tr}_X (Y)_p \circ \tilde\rho_q (X,Y) \simeq *$.  When $p = q$, $ \overline Tr_X (Y)_q \circ \tilde\rho_q (X,Y) \simeq (-1)^{q-1}$.  These homotopies are canonical in $X$ and $Y$, and hold for all connected $X$ and $q\ge 1$.  Thus $\overline Tr_X (Y) \circ (\underset{q\ge 1}{\prod} \tilde\rho_q (X,Y))$ is an equivalence for connected $X$, which implies $\overline {Tr}_X (Y) \circ (D_1\tilde\rho)_X(Y)$ is an equivalence for connected $X$.
\end{theorem}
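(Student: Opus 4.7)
The plan is to evaluate $\overline{Tr}_X(Y)_p\circ\tilde\rho_q(X,Y)$ by tracing a generic input through the five-map sequence of Theorem \ref{thm:2.1.1} (which identifies the source of the trace via Lemma \ref{lemma:4.2}) and then through the pairings of Theorem \ref{thm:2.2.5} (which define the trace itself). Starting from $\rho_q(x_1,\ldots,x_{q-1},y)$ for $(x_1,\ldots,x_{q-1},y)\in|X|^{q-1}\times|Y|$, the first task is to reduce this representation to a combinatorially transparent form. By Lemma \ref{lemma:p1p2} and Proposition \ref{prop:canon2}, after passage via $p_1\times p_2$ to the semidirect product $H^n_k(|JX|)\ltimes M^n_k(|\overline F_1(X,Y)|)$ the representation becomes canonically homotopic to the pair
\[
\bigl(e_{(q-1)q}(\imath(x_{q-1}))\cdot e_{(q-2)(q-1)}(\imath(x_{q-2}))\cdots e_{12}(\imath(x_1)),\;\overline e_{q1}(\imath(y))\bigr);
\]
all subsequent steps begin from this decomposition and preserve naturality in $X$ and $Y$.

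Next, I would invert the equivalence $u$ of formula (\ref{eqn:1.1.2}) to lift this semidirect-product element to a $q$-simplex of the cyclic bar construction $N^{cy}(H^n_k(|JX|),M^n_k(|\overline F_1(X,Y)|))$. The explicit form of (\ref{eqn:1.1.2}) shows that a canonical preimage is, up to canonical homotopy, $(e_{12}(\imath(x_1)),e_{23}(\imath(x_2)),\ldots,e_{q-1,q}(\imath(x_{q-1})),\ast;\overline e_{q,1}(\imath(y)))$, with the cyclic structure of $\rho_q$ matching exactly the cyclic shifts built into $u$. Applying $\varphi^{\cdot}$, $f^{\cdot}$, $g^{\cdot}$ of Theorem \ref{thm:2.2.5}, the operative step is the product $\alpha_1\alpha_2\cdots\alpha_p$ inside $g^{p}_{m,n,k}$ from (\ref{eqn:2.2.3}); on our element this multiplies the elementary expansions together in $|JX|_+$ to yield a word of length exactly $q-1$ in $X$. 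Under the stable James splitting $\Omega^\infty\Sigma^\infty(\Sigma|JX|_+)\simeq\Omega^\infty\Sigma^\infty(\Sigma S^0)\vee\bigvee_{r\ge 1}\Omega^\infty\Sigma^\infty(\Sigma|X|^{[r]})$ this word lies in the $(q-1)$-th summand, and smashed with the $|Y|$-factor from $\overline e_{q,1}(\imath(y))$ the output lies in $\Omega^\infty\Sigma^\infty(\Sigma|X^{[q-1]}\wedge Y|)$, i.e.\ in precisely the $q$-th wedge summand of the trace's target. This already gives $\overline{Tr}_X(Y)_p\circ\tilde\rho_q(X,Y)\simeq\ast$ for $p\ne q$, by a canonical null-homotopy.

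The sign $(-1)^{q-1}$ on the $p=q$ summand emerges from the canonical calculus of Propositions \ref{prop:canon1} and \ref{prop:canon3}: these rewrite a product of the form $e_{12}(-a_1)\cdot e_{23}(-a_2)\cdots e_{q-1,q}(-a_{q-1})\,\overline e_{q,1}(b)$ as a loop-sum $\sum_{j=1}^{q}\overline e_{j,1}\bigl((-1)^{q-j}(\prod_{i=j}^{q-1}a_i)b\bigr)$ of reduced entries whose $X$-word lengths are $q-j$. The minus signs enter because the trace target is an infinite-loop space, so the block-sum pairing in the $g$-step factors through loop-inverse pairings, which is exactly the identity that Proposition \ref{prop:canon1} controls at the canonical level. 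Projection onto the $q$-th James summand $|X|^{[q-1]}\wedge|Y|$ kills all but the $j=1$ term, whose coefficient $(-1)^{q-1}$ identifies $\overline{Tr}_X(Y)_q\circ\tilde\rho_q(X,Y)$ with $(-1)^{q-1}$ times the canonical equivalence between source and target.

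Given these two computations the final assertion follows automatically: the matrix $[\overline{Tr}_X(Y)_p\circ\tilde\rho_q(X,Y)]_{p,q\ge 1}$ is diagonal with $(-1)^{q-1}\cdot\mathrm{id}$ on the $(q,q)$ entry, so the assembled composite $\overline{Tr}_X(Y)\circ(\prod_{q\ge 1}\tilde\rho_q(X,Y))$ is an equivalence of infinite-loop spaces, and passing to first differentials via Proposition \ref{prop:rep1}(3) and Theorem \ref{thm:WG} yields the same conclusion for $\overline{Tr}_X(Y)\circ(D_1\tilde\rho)_X(Y)$. The principal technical obstacle is the bookkeeping in the middle two paragraphs: generic identities between cyclic representations and products of elementary expansions are not canonical (as emphasized in the remark following Proposition \ref{prop:canon2}), so each intermediate homotopy must be produced using only the restricted canonical calculus furnished by Propositions \ref{prop:inv}, \ref{prop:canon1}, \ref{prop:canon2}, and \ref{prop:canon3}, and the sign $(-1)^{q-1}$ must be traceable to a specific instance within that calculus rather than to an ad hoc matrix manipulation.
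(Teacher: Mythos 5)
Your outline captures the right high-level plan and correctly identifies the role of Theorems \ref{thm:2.1.1}, \ref{thm:2.2.5} and the canonical-homotopy calculus of Propositions \ref{prop:inv}--\ref{prop:canon3}, but there are two genuine gaps in the middle, and they happen to be at exactly the places the paper flags as the delicate points.

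The first gap is at the fourth map, the inversion of the equivalence $u:\mathrm{diag}(N^{cy}(M,NE))\to N(M\ltimes E)$. You propose to write down a $q$-simplex preimage $(e_{12}(\imath(x_1)),\ldots,e_{q-1,q}(\imath(x_{q-1})),*;\overline e_{q1}(\imath(y)))$ and say the cyclic shifts in (\ref{eqn:1.1.2}) match the cyclic structure of $\rho_q$. This cannot work as stated: the map $\overline\rho_{q,3}$ lands on a $1$-simplex of the generalized wedge (namely the pair $(\overline\rho^1_q,\overline\rho^2_q)=(f,e)$), so the needed preimage under the simplicial map $u$ must also be a $1$-simplex, i.e.\ an element of $M\times E$, not a $q$-simplex. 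More importantly, $M=H^n_q(|JX|)$ is only grouplike, not a group, so $u$ has no canonical section. The paper's whole point at this step is that one \emph{cannot} invert $u$ directly: the fix is to lift to the $1$-simplex $(f,f^{-1}ef^{-1})$ where $f^{-1}$ is the formal inverse product of elementary expansions, and then use Proposition \ref{prop:inv} to contract $f\cdot f^{-1}$ canonically. That formal-inverse maneuver is precisely what injects the negatives $-\imath(x_i)$ into the entries $z_i$, which is where the sign ultimately comes from. Your account never produces these negatives, so it also never actually derives the $(-1)^{q-1}$; it only names the propositions where such signs could be produced.

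The second gap is the vanishing for $p\ne q$. You deduce it from a "James splitting of $\Omega^\infty\Sigma^\infty(\Sigma|JX|_+)$", arguing that the product of the $\alpha_i$ produces a word of length exactly $q-1$ in $X$. But after the reduction through $f^{-1}$ the relevant entries are $z'_i=(\prod_{j\ge i}-\imath(x_j))\imath(\cdots)$, built from loop-inverse elements, so the word-length argument does not directly show the composite vanishes for $p<q$. The paper handles the two cases by different mechanisms: for $p>q$ the explicit formula $(\overline e_{11}(z'_1)+\cdots+\overline e_{q1}(z'_q))\circ\imath^1(s_2)=\overline e_{11}(z'_1)\circ\imath^1(s_2)$ gives a canonical null-homotopy by direct inspection of James filtration, while for $p<q$ one appeals to Goodwillie calculus --- the composite is a natural transformation from a homogeneous functor of degree $q$ to one of degree $p<q$ evaluated at the $p$th differential, which forces it to be canonically null. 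Your proposal collapses both cases into the one heuristic and omits the homogeneity argument entirely. You also drop the fifth map (partial geometric realization and the simplicial model $\Sigma.Y$), which is where the suspension coordinate that makes the stable comparison possible actually appears. The concluding reduction via Proposition \ref{prop:rep1}(3) and Theorem \ref{thm:WG} is fine.
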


\begin{proof} The last implication follows by Proposition \ref{prop:rep1}. Our main objective is the evaluation of the trace map $\overline {Tr}_X(Y)$ on $\tilde\rho_q(X,Y)$, which we will do in stages.  First, we determine what happens to the image of the representation $\rho_q(X,Y)$ under the maps constructed in Theorem \ref{thm:2.1.1}.  This will bring us into the cyclic bar construction.  The maps provided by (\ref{eqn:2.2.2}) -- (\ref{eqn:2.2.4.b}) will then determine the composition $\overline Tr_X(Y)$.
\vskip.2in

We will assume $Z' = X\vee Y$ where $X$ is connected and $Y$ is $m'$-connected.  $\rho_q$ (resp.\  its restriction $\rho_q (X,Y)$) is induced by a simplicial representation ${Z'}^q$ (resp.\ $ X^{q-1} \times Y$) $\to H^n_q(|JZ'|)$ which we also denote by $\rho_q$ . This map can be represented simplicially by a map of partial monoids: 
\[
\{[p] \mapsto \overset p{\vee} ({Z'}^q,*)\} \overset{\{\overset p{\vee} \rho_q\}}{\longrightarrow} NH^n_q (|JZ'|).
\]

We will construct five diagrams, one for each of the maps in the proof of Theorem \ref{thm:2.1.1}.
\vskip.2in

\underbar{The 1st diagram} The first map in Theorem \ref{thm:2.1.1} was induced by the $(2m'+1)$-connected inclusion of partial monoids: 
\[ 
\{[p] \mapsto \overset p{\vee} (H^n_q (|JZ'|), H^n_q (|JX|))\} \overset{\imath_1}{\longrightarrow} NH^n_q (|JZ'|)\;.  
\] 
The generalized wedge on the left contains the image of $\{\overset p{\vee}\rho_q\}$ and hence $\{\overset p{\vee} \rho_q(X,Y)\}$.  Thus $\{\overset p{\vee}\rho_q (X,Y)\} = \imath_1 \circ \overline\rho_{q,1}$, were $\overline\rho_{q,1}$ is a map of generalized wedges, induced in each degree by the representation $\rho_q(X,Y)$, and fits into the commutative diagram: 
$$
\diagram
\{[p] \mapsto  \overset p{\vee} (X^{q-1} \times Y,*)\} \rto^(.56){\overset p{\vee}\rho_q(X,Y)} \ddouble & NH^n_q (|JZ'|)  \\
\{[p] \mapsto  \overset p{\vee} (X^{q-1} \times Y,*)\}      \rto^(.44){\overline\rho_{q,1}} & \{[p] \mapsto  \overset p{\vee} (H^n_q(|JZ'|), H^n_q (|JX|))\} \uto_{\imath_1} \\
\enddiagram
$$
\vskip.2in

\underbar{The 2nd diagram} The second map in Theorem \ref{thm:2.1.1} is the $(2m'+1)$-connected map of generalized wedges induced by the $(2m'+1)$-connected inclusion 
\[
\overline M^n_q (|F_1(X,Y)|_+) \to \overline M^n_q (|JZ'|_+) \cong H^n_q (|JZ'|).
\]
As the image of $\rho_q$ is contained in $\overline M^n_q (|F_1(X,Y)|_+)$, we can further factor $\rho_q(X,Y)$ as $\imath_2 \circ \overline\rho_{q,2}$. $\overline\rho_{q,2}$ is defined exactly as $\overline\rho_{q,1}$ -- it is the (unique) map of generalized wedges induced by $\rho_q(X,Y)$ which makes the following diagram commute: 
$$
\diagram
\{[p] \mapsto  \overset p{\vee} (X^{q-1} \times Y,*)\} \rto^(.44){\overline\rho_{q,1}} \ddouble & \{[p] \mapsto \overset p{\vee}(\overline M^n_q (|JZ'|_+), H^n_q (|JX|))\} \\ 
\{[p] \mapsto  \overset p{\vee} (X^{q-1} \times Y,*)\} \rto^(.40){\overline\rho_{q,2}} & \{[p] \mapsto \overset p{\vee}(\overline M^n_q (|F_1(X,Y)|_+), H^n_q (|JX|))\} \uto_{\imath_2} \\
\enddiagram
$$
\vskip.2in

\underbar{The 3rd diagram}\;\; The $(n-2)$-connected map
\[
\overline M^n_q (|F_1(X,Y)|_+)\overset {p_1 \times p_2}{\longrightarrow} H^n_q (|JX|)) \times M^n_q (|\overline F_1(X,Y)|)
\]
 induces the third map in Theorem \ref{thm:2.1.1}, where the projections $p_1,p_2$ are induced by the projections of $F_1(X,Y)$ to $JX$ and $\overline F_1(X,Y)$ respectively.  Let $\overline\rho^i_q = p_i \circ \rho_q(X,Y)$ for $i = 1,2$.  Then we have a commuting square
$$
\diagram
\{[p] \mapsto  \overset p{\vee} (X^{q-1} \times Y,*)\} \rto^(.41){\overline\rho_{q,2}} \ddouble &  \{[p] \mapsto \overset p{\vee}(\overline M^n_q (|F_1(X,Y)|_+), H^n_q (|JX|))\} \dto  \\
\{[p] \mapsto  \overset p{\vee} (X^{q-1} \times Y,*)\} \rto^(.32){\overline\rho_{q,3}} & \{[p] \mapsto \overset p{\vee}(\overline M^n_q(|JX|_+) \ltimes M^n_q (|\overline F_1(X,Y)|), H^n_q(|JX|))\}
\enddiagram
$$  
where $\overline\rho_{q,3}$ is induced in each degree by the product $\overline\rho^1_q \times \overline\rho^2_q$.
\vskip.2in

\underbar{The 4th diagram} This is the first place where one encounters complications in computing the trace map on arbitrary representations.  From equation (\ref{eqn:1.1.2}) we can see the problem -- when $M$ is not a group but only grouplike there may be no simple way to choose $f^{-1}$ for $f \in M$, which one needs to do in order to formally invert the equivalence $u : diag(N^{cy}(M,NE)) \overset\simeq{\longrightarrow} N(M\ltimes E)$.  In our case by first
reducing the representation under consideration to $\rho_q(X,Y)$ we are able to circumvent this difficulty.  For by Proposition \ref{prop:canon2}, $\overline\rho^1_q$ and $\overline\rho^2_q$ are canonically homotopic to a product of elementary expansions: 
\begin{gather}
\overline\rho^1_q(x_1,\dots,x_{q-1})\cong e_{q-1q}(\imath(x_{q-1})) e_{q-2q-1}(\imath(x_{q-2}))\cdot \dots \cdot e_{12}(\imath(x_1))\label{eqn:2.3.5.1} \\ 
\overline\rho^2_q(y) \cong \overline e_{q1}(\imath(y)) \text{ (the \underbar{reduced} expansion with $(q,1)$ entry $\imath(y)$)}\label{eqn:2.3.5.2}
\end{gather}
where $\imath(X)$ denotes the image of $x \in |X|$ in $|JX|$ under the natural inclusion $X \to JX$, and similarly for $Y$ (for notational simplicity, we have used $|\overline\rho^1_q|$ and $|\overline\rho^2_q|$ and $|Y|$ respectively.  To recover $\overline\rho^1_q$ and $\overline\rho^2_q$ as above one applies Sing
$(_-)$ and precomposes with the map $A \to Sing(|A|)$).  The notation is explained in section 2.3.  For such a product of elementary expansions Proposition \ref{prop:inv} yields a canonical homotopy between $f^{-1}f, *$ and $ff^{-1}$ where $f^{-1} = e_{12}(-\imath(x_1)) e_{23}(-\imath(x_2))\cdot \dots \cdot e_{q-1q}(-\imath(x_{q-1}))$ for $f =\overline\rho^1_q(x_1,\dots,x_{q-1})$ as above.  We can define a map
\begin{align*}
|\overline\rho^1_{q,4}| : |X|^{q-1} \times |Y| \to &|N^{cy}_1 (H^n_q(|JX|), M^n_q(|\overline F_1 (X,Y)|))| \\
&= H^n_q(|JX|) \times M^n_q(|\overline F_1 (X,Y)|)|
\end{align*}

by $(x_1,\dots, x_{q-1},y) \mapsto (f,f^{-1} ef^{-1})$ where $f = \overline\rho^1_q(x_1,\dots, x_{q-1}),e = \overline e_{q1}(y)$ as given above in (\ref{eqn:2.3.5.1}), (\ref{eqn:2.3.5.2}).  Extending degreewise yields a map $\overline\rho_{q,4}$ and a canonically homotopy-commutative diagram
$$
\diagram
\{[p] \mapsto  \overset p{\vee} (X^{q-1} \times Y,*)\} \rto^(.32){\overline\rho_{q,3}} \ddouble & \{[p] \mapsto \overset p{\vee}(\overline M^n_q(|JX|_+) \ltimes M^n_q (|\overline F_1(X,Y)|), H^n_q(|JX|))\}  \\
\{[p] \mapsto  \overset p{\vee} (X^{q-1} \times Y,*)\} \rto^(.37){\overline\rho_{q,4}} & diag(N^{cy} (H^n_q(|JX|),\Sigma . M^n_q (|\overline F_1(X,Y)|))) \uto_{\simeq}
\enddiagram
$$ 
\vskip.1in
where the equivalence on the right hand side is the map $u$ defined in (\ref{eqn:1.1.2}), and the bottom map has been modified by the homotopy of
(\ref{eqn:1.3.16}) applied to the map in (\ref{eqn:2.3.5.1}) to make it basepoint-preserving. Recall that $\Sigma. A$ is shorthand notation for $\{[p] \mapsto
\overset p{\vee}(A,*)\}$.  The fact that the diagram is canonically homotopy-commutative is important.  Note also that $\overline\rho_{q,4}$  is given on one-simplices by $\overline\rho^1_{q,4}$.  
\vskip.2in

\underbar{The 5th diagram} In Theorem \ref{thm:2.1.1} the fifth map is induced by partial geometric realization
\[
r: \Sigma . M^n_q (|\overline F_1(X,Y)|)\to S^1 \wedge M^n_q (|\overline F_1(X,Y)|)
\] 
and the pairing 
\[
p : S^1 \wedge M^n_q (|\overline F_1(X,Y)|) \to M^n_q (S^1 \wedge |\overline F_1(X,Y)|) \overset\cong{\longrightarrow} M^n_q (|\overline F_1(X,S^1 \wedge Y)|).
\]
Let $M^n_q (|\overline F_1(X,\Sigma. Y)|)$ denote the simplicial object $\{[p] \mapsto M^n_q (|\overline F_1(X,\overset p{\vee} (Y,*))|)\}$ where the face and degeneracy maps are induced by those of $\Sigma . Y$.  There is an obvious map of simplicial objects
\[
\Sigma . M^n_q(|\overline F_1(X,Y)|) \hookrightarrow  M^n_q(|\overline F_1(X,\Sigma . Y)|)
\]
which in degree $p$ is given by the inclusion
\[
\overset p{\vee} M^n_q(|\overline F_1(X,Y)|) \hookrightarrow M^n_q(|\overline F_1(X,\overset p{\vee} Y)|).
\]
The partial realization map r sends $M^n_q(|\overline F_1(X,\Sigma . Y)|)$ to $M^n_q(|\overline F(X,S^1 \wedge Y)|)$ and the composition 
\[
\Sigma .M^n_q(|\overline F_1(X,Y)|) \to M^n_q(|\overline F_1(X,\Sigma . Y)|)
\overset r{\rightarrow} M^n_q(|\overline F_1(X,S^1 \wedge Y)|)
\] 
is equivalent to the previous composition of partial realization followed by the pairing $p$.  Note that the partial realization map above is $(n-2)$-connected by the same type of argument used in the construction of the third map in Theorem \ref{thm:2.1.1}.  Now the map
\[
\Sigma . M^n_q(|\overline F_1(X,Y)|) \overset\alpha{\rightarrow} M^n_q(|\overline F_1(X,\Sigma . Y)|)
\]
is an $H^n_q(|JX|)$-bimonoid map, and so induces a bisimplicial map: 
\[ 
N^{cy} (H^n_q(|JX|),\Sigma . M^n_q (|\overline F_1(X,Y)|))\overset\beta{\rightarrow} N^{cy}(H^n_q(|JX|), M^n_q (|\overline F_1(X,\Sigma . Y)|)).
\]

Let $N^{cy}_p (M,NE)$ denote the simplicial object 
\[
\{[k] \mapsto N^{cy}_{p,k} (M,NE) = M^p \times (NE)_k\}.
\] 
Then the representation
\begin{gather*}
\rho^1_{q,4} : X^{q-1} \times Y \to H^n_q(|JX|) \times  M^n_q(|\overline F_1 (X,Y)|) = N^{cy}_{1,1} (H^n_q(|JX|), \Sigma . M^n_q(|\overline F_1 (X,Y)|)) \\ 
\overset\beta{\rightarrow} N^{cy}_{1,1} (H^n_q(|JX|), M^n_q (|\overline F_1(X,\Sigma . Y)|))
\end{gather*}

extends uniquely to a map of simplicial objects: 
\[ 
\overline\rho_{q,5} : X^{q-1} \times \Sigma . Y \to N^{cy}_1 (H^n_q(|JX|), M^n_q(|\overline F_1(X,\Sigma . Y)|)).
\] 
It is not true that there is a map $\Sigma . (X^{q-1} \times Y) \to X^{q-1} \times \Sigma . Y$ of simplicial objects which makes the appropriate diagram commute (here $X^{q-1} \times \Sigma . Y$  is the simplicial object  $\{[p] \mapsto X^{q-1} \times (\overset p{\vee}(Y,*))\}$). However there is after passing to smash products.  Specifically, we may choose stable splittings $i_1,i_2$ for which $p_1 \circ i_1 \simeq p_2 \circ i_2 \simeq id$ by a homotopy functorial in $X$ and $Y$. These produce the square
$$ 
\diagram
\Omega^{\infty} \Sigma^{\infty} (\Sigma |X^{[q-1]} \wedge Y|) \rto<1ex>^{i_1} \ddto_{\cong}|<{\rotate\tip} & 
\Omega^{\infty} \Sigma^{\infty} (\Sigma(|X|^{q-1}\times Y|)) \lto<1ex>^{p_1} \dto^{\tilde\rho_{q,4}}  \\
& \Omega^{\infty} \Sigma^{\infty}(|diagN^{cy} (H^n_q(|JX|),\Sigma . M^n_q(|\overline F_1 (X,Y)|))|) \ddtor^{\tilde\beta} \\
\Omega^{\infty} \Sigma^{\infty} (|X|^{[q-1]} \wedge \Sigma |Y|) \rto<1ex>^{i_2} & 
\Omega^{\infty} \Sigma^{\infty} (|X^{q-1} \times \Sigma . Y|) \lto<1ex>^{p_2}\dto^{\tilde\rho_{q,5}} \\
& \Omega^{\infty} \Sigma^{\infty} (|N^{cy}(H^n_q(|JX|), M^n_q(|\overline F_1 (X,\Sigma . Y)|))|)
\enddiagram
$$ 
where $\tilde\rho_{q,j} = \Omega^\infty\Sigma^\infty |\overline\rho_{q,j}|$ for $j= 4,5$ and $\overline\beta$ is induced by $(\beta)$.  By the construction of $\tilde\rho_{q,4}$ and $\tilde\rho_{q,5}$ it is straightforward to see that the diagram is canonically homotopy-commutative. Note that the space appearing in the lower right-hand corner is $(n-2)$-equivalent to $\Omega^\infty\Sigma^\infty(|N^{cy} (H^n_q(|JX|),M^n_q(|\overline F_1(X, S^1 \wedge Y)|)|)$ . This is our fifth diagram.
\vskip.2in

Before evaluating the trace we make a useful simplification.  In order to be consistent with notation, we will assume $Y = \Sigma^{2m-1}Z_+$ and use $\Sigma^{2m} F$ to denote $\Sigma|\overline F_1(X,Y)|$. There is no loss of generality here, because computation of $\overline {Tr}_X(Y)$ involves passing through a direct limit in which $Y$ becomes more and more highly suspended.  Now we know that the partial
realization map 
\[
r : N^{cy}(H^n_q(|JX|), M^n_q(|\overline F_1(X,\Sigma . Y)|)) \to N^{cy} (H^n_q(|JX|), M^n_q(|\overline F_1(X, S^1\wedge Y)|))
\]
commutes with the simplicial structure in the first coordinate (given by the face and degeneracy maps of the cyclic bar construction), and that it maps the simplicial space
\[ 
\{[k] \mapsto N^{cy}_{p}(H^n_q(|JX|), M^n_q(|\overline F_1(X,\{\Sigma. Y)\}_k|))\}
\]
to the space $N^{cy}_p (H^n_q(|JX|), M^n_q(|\overline F_1(X, S^1 \wedge Y)|))$.  It follows by Theorem \ref{thm:2.2.5} that, upon restricting to the $q$th component of the trace map  $\overline {Tr}_X(Y)$, we have a canonically homotopy commutative diagram
\[
\spreaddiagramrows{-1.0pc}
\spreaddiagramcolumns{-1.0pc}
\def\objectstyle{\ssize}
\def\labelstyle{\sssize}
\diagram
& & X^{q-1}\times (\Sigma . Y) \ddto^{\rho_{q,5}}  \\
M^n_q(|\overline F_1(X,\Sigma . Y)|) \ddto_{r} \rto^(.37){\cong} &  
N^{cy}_0 (H^n_q(|JX|),M^n_q(|\overline F_1(X, \Sigma . Y)|)) \ddto_{r} & \\
& & N^{cy}_1(H^n_q(|JX|),M^n_q(|\overline F_1(X, \Sigma . Y)|)) 
\ulto_{\partial_0} \ddto^{r}\\
M^n_q(\overline F_1(X,S^1 \wedge Y)|) \rto^(.37){\cong} & 
N^{cy}_0(H^n_q(|JX|),M^n_q(\overline F_1(X,S^1 \wedge Y)|)) & \\
& & N^{cy}_1 (H^n_q(|JX|),M^n_q(|\overline F_1(X, S^1\wedge Y)|)) 
\ulto_{\partial_0}  \\
N^{cy}_0 (H^n_q(|JX|),M^n_{q,1} \wedge M^n_{q,2}) 
\uuto^{\varphi^0_{m,n,q}\circ f^0_{m,n,q}} 
\ddrto_{\overline\pi_p \circ g^0_{m,n,q}} & & \\
& & N^{cy}_1 (H^n_q(|JX|),M^n_{q,1} \wedge  M^n_{q,2}) 
\uuto_{\varphi^1_{m,n,q}\circ f^1_{m,n,q}} \ullto_{\partial_0}
\dlto_{\overline\pi_p \circ g^1_{m,n,q}}  \\
& \Omega^{n+m}\Sigma^{n+3m} (|X^{[p-1]} \wedge Z_+|) &
\enddiagram
\] 
where 
\begin{gather*}
M^n_{q,1} = Map (\overset q{\vee} S^n,S^{n+m}\wedge |JX|_+\wedge |Z_+|) \\
M^n_{q,2} = Map (S^{n+m}, \overset q{\vee} S^{n+2m}\wedge |JX|_+)\;, 
\end{gather*}

and $\overline\pi_p$ is the obvious reduced projection onto the $p$th component 
\[
\Omega^{n+m}\Sigma^{n+3m} (|X^{[p-1]} \wedge Z_+|).
\]
Our object now is to find a map $\rho_{q,6}$ defined on $X^{q-1}\times (\Sigma . Y)$ or its realization, whose range is $N^{cy}_0 (H^n_q(|JX|),M^n_{q,1} \wedge M^n_{q,2})$ such that  $\varphi^0_{m,n,q}\circ f^0_{m,n,q} \circ \rho_{q,6}$ is canonically homotopic to $r\circ \partial_0 \circ \overline\rho_{q,5}$ (of course, it would suffice to lift $r\circ \overline\rho_{q,5}$ directly without using $\partial_0$, and in fact such a lifting can be written down explicitly.  However, it is much simpler to do this after mapping first by $\partial_0$; this makes the computation of $\overline\pi_p \circ g^0_{m,n,q}$ easier as well).  Now $\overline\rho_{q,5}$ is the unique extension to $X^{q-1} \times (\Sigma . Y)$ of the representation $\overline\rho^1_{q,4}$ on 1-simplices $X^{q-1} \times Y$ given by 
\begin{gather*}
(x_1,\dots,x_{q-1},y) \mapsto (f,f^{-1}ef^{-1});\\
f =\overline\rho^1_q(x_1,\dots,x_{q-1}),\, e=\overline\rho^2_q(y)\;. 
\end{gather*}
These are in turn expressed as a product of elementary expansions by (\ref{eqn:2.3.5.1}), (\ref{eqn:2.3.5.2}).  Under $\partial_0$ this element maps to $(f^{-1}ef^{-1}\cdot f)$ which is canonically homotopic to $(f^{-1}e)$.  It follows that we can describe $r \circ \partial_0 \circ \overline\rho_{q,5}$ on the realization of $X^{q-1} \times (\Sigma . Y)$ as the map of spaces given by the
representation 
\begin{gather*}
\rho^1_{q,6} : |X^{q-1} \times (\Sigma . Y)| \to |M^n_q(|\overline F_1(X, \Sigma . Y)|)| \\ 
(x_1,\dots,x_{q-1},\tilde y) \mapsto (f^{-1}\tilde e);\quad f =\overline\rho^1_q(x_1,\dots,x_{q-1}), \tilde e = \overline e_{q-1}(\imath (\tilde y))
\end{gather*}
where $f$ is now viewed as a product of elementary expansions whose range is $|M^n_q(|\overline F_1(X, \Sigma . Y)|)|$. Note that $\tilde y$ denotes an element of $|\Sigma . Y|$.  Writing $f^{-1}$ as $e_{12}(-\imath(x_1)) e_{23}(-\imath(x_2))\cdot\dots\cdot e_{q-1q}(-\imath(x_{q-1}))$ and applying Proposition \ref{prop:canon1} yields a canonical homotopy between $\overline\rho^1_{q,6}$ and the
representation
\begin{gather*}
\overline\rho^2_{q,6} : |X^{q-1} \times (\Sigma . Y)| \to |M^n_q(|\overline F_1(X, \Sigma . Y)|)| \\
(x_1,\dots,x_{q-1},\tilde y) \mapsto \overline e_{11}(z_1) +  \overline e_{21}(z_2) + \dots + \overline e_{q1}(z_q) 
\end{gather*}
where $z_i = (\overset{q-1}{\underset{j=i}{\prod}} - \imath(x_i)) \imath(\tilde y) \in \pm (|\overline F_1(X, \Sigma .Y)|) = \pm \Sigma^{2m} F$ and ``-'' denotes the inverse under loop sum. We can write $\tilde y \in \Sigma|Y| \cong S^m \wedge |Z| \wedge S^m$ as $\tilde y = (s_1,z,s_2)$.  Define $\overline\rho^3_{q,6}$ by
\begin{gather}
\overline\rho^3_{q,6} : |X^{q-1} \times (S^m \wedge |Z|_+ \wedge S^m)| \to |M^n_{q,1} \wedge M^n_{q,2}|\nonumber\\
(x_1,\dots,x_{q-1},s_1,z,s_2) \mapsto (\overline e_{11}(z'_1) + \dots + \overline e_{q1}(z'_q)) \wedge \imath^1(s_2).\label{eqn:2.3.11}
\end{gather}

Here $M^q_{n,i}$ is as in the last diagram.  The map $\imath^1(s_2)$ is represented by the composition
\begin{gather*}
S^{n+m} \to S^{n+m} \wedge S^m \cong S^{n+2m}\wedge S^0 \overset{\text{inc}}{\hookrightarrow} (S^{n+2m} \wedge |JX|_+)_1 \\
\overset{\text{inc}}{\hookrightarrow} \overset q{\vee} S^{n+2m} \wedge |JX|_+ \\
s \mapsto (s,s_2)
\end{gather*}

$z'_i = (\overset{q-1}{\underset{j=i}{\prod}} - \imath(x_i)) \imath(s_1,z) \in \pm|JX_+| \wedge S^{n+m} \wedge |Z|_+ \cong \pm S^{n+m} \wedge |JX|_+ \wedge |Z_+|$, and the product of reduced elementary expansions in (\ref{eqn:2.3.11}) is viewed as an element of $M^n_{q,1}$.  It is straightforward to verify that the
diagram
\define\xa{|X^{q-1} \times (S^m \wedge |Z|_+ \wedge S^m)|}
\define\xb{|M^n_{q,1} \wedge  M^n_{q,2}|}
\define\arr1{|\varphi^0_{m,n,q}\circ f^0_{m,n,q}|} 
\define\xc{|X^{q-1} \times \Sigma . Y|}
\define\xd{|M^n_q(|\overline F_1(X, \Sigma . Y)|)|}
$$
\diagram
|X^{q-1} \times (S^m \wedge |Z|_+ \wedge S^m)|\rto^(.62){\overline\rho^3_{q,6}} \dto_{\simeq} &  |M^n_{q,1} 
\wedge  M^n_{q,2}| \dto^{|\varphi^0_{m,n,q}\circ f^0_{m,n,q}|} \\
|X^{q-1} \times \Sigma . Y|\rto^(.45){\overline\rho^2_{q,6}}  & |M^n_q(|\overline F_1(X, \Sigma . Y)|)|
\enddiagram
$$ 
is canonically homotopy-commutative.  So taking $\overline\rho_{q,6}$ to be $\overline\rho^3_{q,6}$ provides the necessary lift in order to evaluate $\overline{Tr}_X(Y)$.  This evaluation is achieved, according to Theorem \ref{thm:2.2.5}, by switching the terms in (\ref{eqn:2.3.11}) and composing.
Since $\imath^1$ just involves the standard inclusion to the first factor in the wedge, we get 
\[
(\overline e_{11}(z'_1)\cdot\dots\cdot \overline e_{q1}(z'_q)) \circ \imath^1(s_2) = \overline e_{11}(z'_1)\circ \imath^1(s_2)
\]
 which implies that $\overline\pi_p \circ g^0_{m,n,q} \circ \overline\rho^3_{q,6}$ is canonically null-homotopic for $p > q$, and that $\overline\pi_q\circ g^0_{m,n,q} \circ \overline\rho^3_{q,6}$ is the map 
\[
(|X^{q-1}| \wedge S^m \wedge |Z|_+ \wedge S^m) \to \Omega^{n+m} \Sigma^{n+3m} (|X|^{[q-1]} \wedge |Z|_+)
\]
given by 
\[
(x_1,\dots,x_{q-1},s_1,z,s_2) \mapsto (\overset{q-1}{\underset{j=1}{\prod}} - \imath (x_i), s_1,\imath(z),s_2).
\]
Up to reparametrization independent of $X$ and $Z$ this composition is canonically homotopic to $(-1)^{q-1} j_{2m}$,  where $j_{2m}$ is the standard inclusion 
\[
\Sigma^{2m}|X|^{[q-1]}\wedge |Z|_+ \to \Omega^{n+m} \Sigma^{n+3m} (|X|^{[q-1]} \wedge |Z|_+). 
\]
To complete the proof, note that $\overline {Tr}_X (Y)_p\circ\tilde\rho_q (X,Y)$   is induced by a natural transformation of a homogeneous functor of degree $q$ to a  homogeneous functor of degree $p$ (evaluated at $X$), which must be canonically  null-homotopic for $q > p$ as it factors through the $\text{p}^{\text{th}}$ differential of a  $q$-homogeneous functor.
\end{proof}

We may now complete the proof of Theorem A. By the previous theorem, the map
\[
\tilde \rho : \tilde D(X) \to \overline A(\Sigma X)
\]
induces a map on first differentials
\begin{equation}\label{eqn:2.3.14}
(D_1 \tilde \rho)_X (Y) : (D_1 \tilde D)_X (Y) \to (D_1 \overline A\Sigma)_X (Y)
\end{equation}
which is split-injective on homotopy groups for all $X$ and $Y$. In the case that both $X$ and $Y$ are finite complexes, the homotopy groups of both  sides of (\ref{eqn:2.3.14}) are finitely generated; for the right hand side this follows by Theorem \ref{thm:WG}. This implies that  $(D_1 \tilde \rho)_X (Y)$ is an equivalence for all finite $X$ and $Y$. As both functors are also homotopy functors (hence commute up to homotopy with filtered colimits), this implies $(D_1 \tilde \rho)_X (Y)$ is an equivalence for all $Y$ and connected $X$. Finally, applying  Goodwillie's Theorem \ref{thm:conv} at $X = *$ implies that $\tilde \rho$ is an equivalence.
\vskip.2in

The equivalence $\tilde D(X) \overset\rho{\rightarrow} \overline A(\Sigma X)$ is natural with respect to $X$, so that if $f : X\to Y$ is a map of connected simplicial sets there is a homotopy-commutative diagram
$$
\diagram
\tilde D(X)\rto^{\tilde\rho_X}\dto_{\tilde D(f)} &\overline A(\Sigma X)\dto^{\overline A(\Sigma f)} \\
\tilde D(Y)\rto^{\tilde\rho_Y} & \overline A(\Sigma Y)
\enddiagram
$$
It also follows that $\rho$ restricts to yield equivalences 
\[
\overset n{\underset{q=m+1}{\prod}} \tilde D_q(X) = p^m_n\tilde D(X) \overset{p^m_n\tilde\rho(X)}{\longrightarrow} p^m_n\overline A(\Sigma X) 
\] 
natural in $X$ for all $0\le m < n \le \infty$, because $\tilde\rho$ is a natural transformation of homotopy functors and hence commutes with Goodwillie Calculus.  However, it is not true that $\tilde\rho$ or $p^m_n\tilde\rho$ are natural with respect to maps $\Sigma X \overset g{\rightarrow} \Sigma Y$ which do not desuspend up to homotopy.
\newpage

\end{document}